%%%%%%%%%%%%%%%%%%%%%
%Revisione marzo 2018 Submitted to??
%
%%%%%%%%%%%%%%%%%%%%%
\documentclass[11pt]{amsart}

\usepackage{amsmath,amssymb,amscd,amsfonts,amsthm,verbatim}
\usepackage[all,cmtip]{xy}
\usepackage{paralist}
\usepackage[mathscr]{eucal}
\usepackage{color}
\usepackage{pdfsync}
\usepackage[]{fontenc}
\usepackage{enumerate}
\usepackage{pgfplots}
\pgfplotsset{width=7cm,compat=1.3}
\textheight=19.9cm
\textwidth=15.6cm
\hoffset=-1.6cm

\usepackage[pdftex]{hyperref}
\hypersetup{citecolor=blue,linktocpage}

\usepackage[colorinlistoftodos]{todonotes}
 %remark
\newcommand\lsi[1]{\todo[inline,color=pink!60]{#1}} %remark
\newcommand\ma[1]{\todo[color=cyan!50]{#1}}
\newcommand\mai[1]{\todo[inline,color=cyan!50]{#1}}
\newcommand\rp[1]{\todo[color=yellow]{#1}}
\newcommand\rpi[1]{\todo[inline,color=yellow]{#1}}

\synctex=-1

\newtheorem{thm}{Theorem}[section]
\newtheorem{lem}[thm]{Lemma}
\newtheorem{prop}[thm]{Proposition}
\newtheorem{hyp}[thm]{Hypotheses}
\newtheorem{cor}[thm]{Corollary}

\newtheorem{assu-nota}[thm]{Assumption--Notation}
\newtheorem{nota}[thm]{Notation}
\newtheorem*{thm*}{Theorem}

 % "letter-numbered" theorems

\theoremstyle{definition}
\newtheorem{defn}[thm]{Definition}
\newtheorem{rem}[thm]{Remark}
\newtheorem{ex}[thm]{Example}
\newtheorem{qst}[thm]{Question}

\newcommand{\inv}{^{-1}}
\newcommand{\unu}{^{\nu}}

\newcommand{\into}{\hookrightarrow}

\newcommand{\C}{\mathbb C}
\newcommand{\Z}{\mathbb Z}
\newcommand{\Q}{\mathbb Q}
\newcommand{\R}{\mathbb R}

\newcommand{\N}{\mathbb N}
\newcommand{\pp}{\mathbb P}

\newcommand{\ud}{^{(d)}}
\newcommand{\up}[1]{^{(#1)}}
\newcommand{\upq}[1]{^{[#1]}}

\newcommand{\cC}{\mathcal C}

\newcommand{\OO}{\mathcal O}
\newcommand{\cO}{\mathcal O}

\newcommand{\cV}{\mathcal V}
\newcommand{\cZ}{\mathcal Z}

\DeclareMathOperator{\Aut}{Aut}
\DeclareMathOperator{\Pic}{Pic}
\DeclareMathOperator{\pic}{Pic}

\DeclareMathOperator{\Alb}{Alb}
\DeclareMathOperator{\alb}{alb}

\DeclareMathOperator{\Proj}{Proj}

\DeclareMathOperator{\rk}{rk}

\DeclareMathOperator{\pr}{pr}

\DeclareMathOperator{\kod}{kod}
\DeclareMathOperator{\vol}{vol}
\DeclareMathOperator{\sym}{Sym}

\DeclareMathOperator{\Gal}{Gal}

\DeclareMathOperator{\sing}{Sing}

\newcommand{\fie}{\varphi}

\newcommand{\cP}{\mathcal{P}}

\newcommand{\wt}{\widetilde}

\numberwithin{equation}{section}

\title{Linear systems on irregular varieties}
\author{Miguel \'Angel Barja}
\author{Rita Pardini}
\author{Lidia Stoppino}
\address{Miguel \'Angel Barja\\Departament de Matem\`atiques\\Universitat Polit\`ecnica de Catalunya\\Avda. Diagonal 647\\08028 Barcelona\\Spain}
\email{miguel.angel.barja@upc.edu}
\address{Rita Pardini\\Dipartimento di Matematica\\Universit\`a di Pisa \\Largo B. Pontecorvo 5\\I-56127  Pisa\\Italy}
\email{rita.pardini@unipi.it}
\address{Lidia Stoppino \\Dipartimento di Scienza e Alta Tecnologia\\Universit\`a dell'Insubria \\ Via Valleggio 11\\22100, Como\\Italy}
\email{lidia.stoppino@uninsubria.it}

\thanks{The first author was supported by MINECO MTM2015-69135-P ``Geometr{\'\i}a y Topolog{\'\i}a de Variedades, \'Algebra y Aplicaciones" and by Generalitat de Catalunya SGR2014-634. The second and third authors are members of G.N.S.A.G.A.--I.N.d.A.M.  This research was partially supported  and by MIUR (Italy) through  PRIN 2010-11 ``Geometria delle variet\`a algebriche" and PRIN 2012-13 ``Moduli, strutture geometriche e loro applicazioni''.}

\begin{document}
\begin{abstract}
Let $X$ be a  normal  complex projective variety,  $T\subseteq  X$  a subvariety,  $a\colon X\rightarrow A$ a morphism  to an abelian variety such that $\pic^0(A)$ injects into $\pic^0(T)$  and let
$L$ be a line bundle on $X$.

 Denote by $X\up{d}\to X$ the connected \'etale cover induced by the $d$-th multiplication map of $A$, by $T\ud \subseteq X\ud$ the preimage of $T$ and by $L\up{d}$ the pull-back of $L$ to $X\up{d}$.
For    $\alpha\in \Pic^0(A)$ general, we study the restricted  linear system $|L\up{d}\otimes a^*\alpha|_{|T\ud}$:  if  for some $d$ this  gives a generically finite  map $\varphi\up{d}$, we  show that  $\varphi\up{d}$ is  independent of $\alpha$ or $d$ sufficiently large and divisible, and  is induced by  the {\em eventual  map} $\varphi\colon T\to Z$ such that $a_{|T}$ factorizes through $\varphi$.

The generic value  $h^0_a(X_{|T}, L)$ of $h^0(X_{|T}, L\otimes\alpha)$ is called the {\em (restricted) continuous rank}. We prove that if $M$ is the pull back of an ample divisor of $A$,  then $x\mapsto h^0_a(X_{|T}, L+xM)$ extends  to  a continuous function  of $x\in\R$, which is differentiable except possibly at countably many points; when $X=T$ we compute the left derivative explicitly.

In the case when $X$ and $T$ are smooth, combining the above results  we prove  {\em Clifford-Severi type inequalities}, i.e.,  geographical bounds  of the form $$\vol_{X|T}(L)\ge C(m) h^0_a(X_{|T},L),$$ where $C(m)={\mathcal O}(m!)$.
%, and
%\par \noindent {\em Castelnuovo type inequalities}, of the form $$h^0_a(X_{|T},kL)\ge C(k,m)h^0_a(X_{|T},L),$$  where $C(k,m)={\mathcal O}(k^m)$.

\par
\medskip
\noindent{\em 2000 Mathematics Subject Classification:} 14C20 (14J29, 14J30, 14J35, 14J40).
\end{abstract}
\maketitle
\setcounter{tocdepth}{1}
\tableofcontents
\section{Introduction}
The aim of this paper is to explore a new notion of asymptotic behaviour of line bundles on irregular varieties, that  we call {\em eventual} behaviour, and prove some fundamental results.
These  results  have striking formal analogies with the usual asymptotic study, as in \cite{lazarsfeld-I} and \cite{lazarsfeld-II}, but  are in fact quite different, and so are the proofs. We introduce the {\em eventual map},  a new way of associating a map to a line bundle on a variety of maximal Albanese dimension, only formally remindful of the Iitaka fibration, and the {\em continuous rank} function,  a continuous function defined on a line in the  space of $\R$-divisor classes that has properties  similar to those of the volume function.
In the last part of the paper  we make the  relation between continuous rank and volume precise by giving   explicit lower bounds for their  ratio   that imply, as a special case, new strong geographical bounds for irregular  varieties of general type.
\smallskip

We work in the following relative setup.
Let $X$ be a   normal complex projective variety, let $T\subseteq X$ be a   subvariety and let  $a\colon X\rightarrow A$ be a morphism  to an abelian variety.  Assume that $a_{|T}$  is {\em strongly generating}, i.e.  that the induced homomorphism $\pic^0(A)\rightarrow \pic^0(T)$ is injective. Notice that this condition implies in particular  that $\pic^0(A)\to \pic^0(X)$ is also injective;  so we identify $\pic^0(A)$  with its image in $\pic^0(X)$ and for $\alpha\in \pic^0(A)$ we denote $a^*\alpha$ simply by $\alpha$.

 For  any integer  $d\geq 1$ consider the connected variety $X\up{d}$ defined by the following cartesian diagram, where   $\mu_d$ is  multiplication by  $d$  on $A$:
\begin{equation}\label{diag:-1}
\begin{CD}
X\up{d}@>\wt{\mu_d}>>X\\
@V{a_d}VV @VV a V\\
A@>{\mu_d}>>A
\end{CD}
\end{equation}
and set $T\ud:=\wt\mu_d^*(T)$. %; $T\ud$ is connected, since $a_{|T}$ is strongly generating
We fix   $L\in \pic(X)$,  set  $L\up{d}=\wt\mu_d^*(L)$ and
we study the linear system $|L\up{d}\otimes \alpha|_{|T\up{d}}$ for  $\alpha\in \pic^0(A)$ general and $d$ sufficiently large and divisible.

All the results in the paper are developed in this relative setting, but  for simplicity in this introduction  we describe only   the case   when  $T =X$  and  $X$ has \underline{maximal $a$-dimension}, i.e., when $a$ is generically finite onto its image.
However, we wish to stress that  the relative setup,
considered here for the first time,
not only is intrinsically interesting but it is indispensable for the applications in the second part of the paper, since even to prove the statements in the absolute case  $X=T$ one has to use the relative version,  taking as  $T$ a general element of a suitable linear system.
\smallskip

The first part of the paper is concerned with the study of the map given by $|L\ud\otimes \alpha|$ for $d\gg 0$. %The results we obtain have  strong formal analogies with the asymptotic theory of line bundles. \rp{inserire ref.. Spiegare bene la differenza}
The generic value $h^0_a(X,L)$ of $h^0(X,L\otimes \alpha)$ for $\alpha \in \Pic^0(X)$, called the {\em continuous rank}  (\cite{barja-severi}),  is a measure of positivity of $L$: indeed if $h^0_a(X,L)>0$ then $L$ is big. Our main result here (Theorem \ref{thm:factorization}) is the  existence,  when $h^0_a(X,L)>0$,  of  a generically finite dominant  rational map, the {\em eventual map}, $\fie\colon X\to Z$ such that:
\begin{itemize}
\item[ (1)] $a$ is composed with $\fie$,
\item[(2)] for $d$ large and divisible enough and $\alpha\in \Pic^0(A)$ general the map given by $|L\up{d}\otimes \alpha|$ is obtained from $\fie$ by base change with the $d$-th multiplication map.
\end{itemize}
Properties (1) and (2) characterize the eventual map up to birational isomorphism. This is a completely new way of associating a map to  a line bundle on an irregular variety via an asymptotic construction, in a situation where the Iitaka fibration is birational and therefore gives no information.
Notice also that  when $L=K_X$ and $a$ is the Albanese  map $\fie$ is a new  intrinsic invariant of varieties of maximal Albanese dimension, the {\em eventual paracanonical map}: we study this case in detail in \cite{BPS2}. Recently Jiang in \cite{Jiang} obtains further results on the characterization of this factorization and completely classifies the structure of the eventual paracanonical map in dimensions 2 and 3.

The second theme of the paper is the study of the continuous rank  $h^0_a(X,L)$ of a line bundle $L$ on $X$. One of the motivations for studying this invariant rather than $h^0(X,L)$ is its  behaviour under multiplication maps  as in \eqref{diag:-1}:
 one has $$h^0_{a_d}(X\ud,L\ud)=d^{2q}h^0_a(X,L).$$
  Using this property, it is easy to see that, given a line bundle $M=a^*H$ with $H$ ample  on $A$, one can define in a natural way   $h^0_a(X,L+xM)$ for rational values of $x$.
\begin{comment}
 The second theoretical result concerns the {\em continuous rank} $h^0_a(X_{|T},L)$,  which is the generic value of $h^0(X_{|T}, L\otimes \alpha)$ when $\alpha$ varies in $\pic^0(A)$.  This invariant was introduced for the first time in  \cite{barja-severi} in the case $X=T$ and coincides with $h^0$ or $\chi$ when $L$ is positive enough, by (generic) vanishing results. It  can be regarded  as a measure of the positivity of a line bundle: indeed, if $X$ has maximal $a$-dimension and  $h^0_a(X_{|T}, L)>0$ then $L_{|T}$ is big)
 \end{comment}
  We   prove that this function extends to a continuous convex function on $\R$  and  compute its left derivative. This ``continuous continuous rank function'' is  a subtle invariant, that is not easy to compute explicitly  (see Examples \ref{ex: theta} and \ref{ex: jiang} and Question \ref{q: polynomial}),  and we believe it will have very interesting applications \footnote{Jiang and Pareschi in \cite{JP} study some natural generalizations of the continuous rank, and provide evidence towards a positive answer to Question \ref{q: polynomial}}.
Here we use it to prove several new Clifford-Severi inequalities (cf. \S \ref{sec: C-S revisited}). These are inequalities of the form:
 $$
 \vol(L)\ge C(n) h^0_a(X,L),
 $$
where $C(n)$ is an explicit positive  constant depending on the dimension  $n$ of $X$.

Inequalities of this type can be regarded as a quantitative version of the remark, made at the beginning of the introduction, that if $h^0_a(X,L)>0$ then $L$ is big, and therefore $\vol(L)>0$. The main results of \cite{barja-severi} (cf. also \cite{Zhang} for the case $L=K_X$) are the following: for any nef $L$ we have
\begin{equation}\label{eq: gen-Severi1}\vol(L)\ge n! h^0_a(X,L),
\end{equation}
\noindent while if $K_X-L$ is pseudoeffective:
\begin{equation}\label{eq: gen-Severi}\vol(L)\ge 2n! h^0_a(X,L).
\end{equation}

The reason for naming these type of inequalities after Clifford  is the continuous Clifford inequality
  $$\vol L=\deg L \ge 2h^0_a(X, L),$$
for a line bundle $L$ on a curve $X$ with $0\le \deg L\le 2g(X)-2$, which can be easily deduced from the usual Clifford Theorem using the covering trick introduced in \cite{Pa}.
Not only this is the simplest instance of an inequality of the type under consideration but  is the starting step of the inductive argument in  the proof of the generalized Clifford-Severi inequalies.
When $X$ is a minimal surface of general type and maximal Albanese dimension and $L=K_X$, the inequality \eqref{eq: gen-Severi} reads
\begin{equation}\label{eq: severi}
K^2_X\ge 4\chi(K_X),
\end{equation}
and is known as the ``Severi inequality''. It  has a long history. Severi (\cite{sev}) stated it  as a theorem
in 1932, but his proof was not correct (cf. \cite{cat}). Around  the end of the   1970's it was rediscovered and posed as
a conjecture  (\cite{miles}, \cite{cat}) that motivated intense  work on surfaces of general type,
including  the foundational paper \cite{xiao},
where  the conjecture is proven for  a surface fibered over
a curve of positive genus and
\cite{manetti},  where it is proven under the additional assumption that
the surface has ample canonical bundle. An unconditional proof was finally given by the second named author in 2005 (\cite{Pa}).

The Severi inequality can be written also as $c_1^2\ge \frac 12 c_2$, where $c_1, c_2$ are the Chern classes of the minimal surface $X$, and therefore  is an inequality between topological invariants. Probably it was this fact that hid  for some years that    the natural generalizations \eqref{eq: gen-Severi1} and  \eqref{eq: gen-Severi} of the Severi inequality to  nef  line bundles on variety of  any dimension $n$.

Here, using the properties of the continuous rank function, we give a slick proof of the  inequalities \eqref{eq: gen-Severi1} and \eqref{eq: gen-Severi} for any (not necessarily nef) $L$ and  extend them  to the relative case. In addition we give significative improvements depending on the geometry of the map $a$.
More precisely, we  prove  stronger inequalities in the following cases:
\begin{itemize}
\item[(a)]  when $a$ is  not composed with an involution
\item[(b)]  when  $a$  is   birational onto its image
\end{itemize}
The precise statements are given in \S \ref{sec: C-S revisited}.
 To give an idea of the significance of these inequalities we spell   out here their  consequences  for $X$  a minimal $\Q$-Gorenstein $n$-fold of maximal Albanese dimension  with terminal singularities.
In this case the generalized  Clifford-Severi inequality  \eqref{eq: gen-Severi} implies
\begin{equation}\label{eq: severi0}
K_X^n\geq 2\,n!\,\chi(\omega_{X}).
\end{equation}
Under assumption (a), we are able to prove here:
\begin{equation} \label{eq:  severi1}
K_X^n\geq \frac 9 4\,n!\,\chi(\omega_{X}),
\end{equation}
while under  assumption (b) we obtain the stronger inequality:
\begin{equation}\label{eq: severi2}
K_X^n\geq \frac 5 2\,n!\,\chi(\omega_{X}).
\end{equation}
Inequality \eqref{eq: severi1} had recently been proven only for  surfaces in  \cite{LZ2}; inequality \eqref{eq: severi2} is completely  new even in the surface  case.  All three inequalities show how strongly the geometry of the Albanese map affects the numerical invariants of a variety of general type.  We do not know whether  \eqref{eq: severi1} and  \eqref{eq: severi2} are sharp: in the case of surfaces it is expected (cf. Question \ref{question: (n+1)!}) that $K^2_X\ge 6\chi(\omega_X)$ when the Albanese map is birational, and   \eqref{eq: severi2} gives the first effective result in this direction.

In the last section  we work out several examples and pose some questions.

\bigskip

\noindent{\bf Acknowledgements:}
We would like to thank   Mart\'i Lahoz, Yongnam Lee, Yusuf Mustopa,  Gianluca Pacienza,  Giuseppe Pareschi, Gian Pietro Pirola and Angelo Vistoli for useful mathematical discussions. Special thanks go to Zhi Jiang, for detecting a mistake in an earlier version of this paper.
The second and third named author would like to thank the Departament de Matem\`atiques of the Universitat Polit\`ecnica de Catalunya for the invitation and the warm hospitality in November 2015 and September 2016.

\section{Set-up and preliminaries}\label{sec: preliminaries}

\subsection{Notation and conventions}\label{ssec: conventions}
We work over the complex numbers; varieties (and subvarieties) are assumed to be irreducible and projective.
\smallskip

In this paper the focus is on birational geometry, so  a   {\em map}  is a   rational map and we denote all maps by solid arrows.  \par
\noindent Given maps $f\colon X\to Y$ and $g\colon X\to Z$, we say that {\em $g$ is composed with $f$} if there exists a map $h\colon Y\to Z$ such that $g=h\circ f$. Given a map  $f\colon X\to Y$  and an involution $\sigma$ of $X$, we say that {\em $f$ is composed with the involution $\sigma$} if $f\circ\sigma=f$.
\par
\noindent We say that two dominant maps $f\colon X\to Z$, $f'\colon X\to Z'$ are {\em  birationally equivalent} if there exists a birational isomorphism $h\colon Z\to Z'$ such that $f'=h\circ f$.

\noindent A map $f\colon X\to Y$ is said to be {\em generically finite} if $\dim f(X)=\dim X$, i.e., we do not require that $f$ be dominant.

\noindent If $a\colon X\to A$ is a generically finite morphism to an abelian variety, we say that $X$ {\em has maximal $a$-dimension}.
If $a\colon X\to A$ is a  morphism to an abelian variety such that  the induced homomorphism  $a^*\colon \Pic^0(A)\to \Pic(X)$ is injective, we say that $a$ is {\em strongly generating}.   \smallskip

Numerical equivalence is denoted by $\equiv$; given line bundles  $L, L'\in \Pic(X)$ we write $L\le L'$ if $L'-L$ is pseudoeffective.

Let $X$ be a variety, $T\subseteq X$ a subvariety,  and $L\in \pic(X)$; we  denote by $\vol_X(L)$ the volume of $L$ on $X$ and by $\vol_{X|T}(L)$ the restricted volume of $L$ on $T$ (cf. \cite{elmnp} for the definition of the restricted volume).

Given a variety $X$  and  a subvariety $T\subseteq  X$,   we  denote
by $H^0(X_{|T},L)$ the image of the restriction map $H^0(X,L)\to H^0(T,L_{|T})$, by $h^0(X_{|T},L)$ its dimension
and by $|L|_{|T}$  the  corresponding subspace of $|L_{|T}|$.
\smallskip

If $d$ is a non-negative integer, we write ``$d\gg 0$'' instead of ``$d$ large and divisible enough''.

\subsection{Covering trick}\label{ssec: trick}
Let  $X$ be a  variety of dimension $n$ and  let $a\colon X\to A$ be  a morphism  to an abelian variety of dimension $q$.

In  this subsection  we assume in addition   that the map  $a$ is {\em strongly generating}. Note that in particular $a(X)$ generates the abelian variety $A$.

We introduce
 the following notations and geometric constructions  that will be used throughout all the paper:
\begin{itemize}
\item $H$ is a fixed very ample divisor on $A$ and we set $M=a^*H$.
\item If $d>0$ is an integer, we denote by  $\mu_d\colon A\to A$  the $d$-th multiplication map; the following cartesian diagram defines  $X\up{d}$ and the maps $a_d$ and $\wt{\mu_d}$:
\begin{equation}\label{diag:0}
\begin{CD}
X\up{d}@>\wt{\mu_d}>>X\\
@V{a_d}VV @VV a V\\
A@>{\mu_d}>>A
\end{CD}
\end{equation}
The variety $X\ud$ is irreducible since $a$ is strongly generating.
In addition, the map $a_d$ is again  strongly generating, and is generically finite if $a$ is.  This is proved in \cite{BPS} for smooth surfaces, but the proof works  without modifications for  varieties of arbitrary dimension.
\item If $T\subseteq X$ is a subvariety such that $a_{|T}$ is strongly generating, then we denote  the preimage of $T$ in $X\up{d}$  by $T\up{d}$. Note  that $T\ud$ is again irreducible.

\item For $L\in \Pic(X)$ we write  $L\ud:=\wt{\mu_d}^*L$; we also set $M_d=a_d^*H$. Notice that by \cite[Prop 2.3.5]{LB} we have that $H\equiv \frac{1}{d^2}\mu_d^*H$ and so
$M_d\equiv \frac{1}{d^2}M^{(d)}$.
\item Given $\alpha\in\Pic^0(A)$,  we denote again by $\alpha$ its pull back to $X$, $T$,  $X\up{d}$,  etc.\dots For instance, we  write $H^0(X\up{d},L\up{d}\otimes \alpha)$ instead of $H^0(X\up{d},L\up{d}\otimes a_d^*\alpha)$. Observe also that using this convention we have   $L\up{d}\otimes \alpha^d= (L\otimes \alpha)\up{d}$.
\end{itemize}

\subsection{Continuous rank}\label{ssec: rank}

Let  $X$ be a   normal  variety of dimension $n$,   let $a\colon X\to A$ be  a morphism  to an abelian variety of dimension $q$ and let $L$ be a line bundle on $X$.
As in \cite[Def. 2.1]{barja-severi}, we define the   {\em continuous rank  of $L$ (with respect to $a$)}   as  the integer
$$
h_a^0(X,L):=\min\{h^0(X,L\otimes \alpha) \mid \alpha \in \Pic^0(A)\}.
$$

\begin{rem}
When $X$ is smooth and $L=K_X+D$ is the adjoint of a nef divisor $D$,  we have $h^0_a(X,L)=\chi(L)$ by generic vanishing  (cf. \cite[Theorem B]{pp}). If in addition $D$ is  big, then  by the  Kawamata-Viehweg vanishing theorem, $h^0_a(X,L)=\chi(L)=h^0(X,L)$. In general, by \cite{CP} we have that $h^0_a(X,L)\geq h^0(X,L)-h^1(X,L)$.
\end{rem}
Given a subvariety $T\subseteq X$, there exists a non-empty open set of $\Pic^0(A)$ where  $h^0(X_{|T},L\otimes \alpha)$ is constant. We define the {\em restricted continuous rank}  $h^0_a(X_{|T},L)$ to be  this generic value.

Note that if  $T$ is a proper subvariety of $X$, then $h^0_a(X_{|T},L)$ needs not be the minimum value of  $h^0(X_{|T},L\otimes \alpha)$.

\begin{rem}\label{rem: continuous rank algebraic}
Observe that the restricted continuous rank only depends on the  class of $L$ in $\pic^0(X)/\pic^0(A)$.
\end{rem}

\begin{rem}\label{rem: continuous rank non normale} Since we assume that   $X$ is normal, the continuous rank is invariant under birational morphisms ${\widetilde X}\rightarrow X$.  Without the normality assumption on $X$, the continuous rank may increase passing from $X$ to $\widetilde X$.
\end{rem}

\begin{rem}\label{rem: difference}
By Remark \ref{rem: continuous rank non normale}, if  we  blow  up $X$ along $T$ and normalize, both $h^0_a(X,L)$  and  $h^0_a(X_{|T},L )$ stay the same. So we may reduce    to the case where $T$ is a Cartier divisor and,   in particular, we have that $h^0_a(X_{|T}, L)=h^0_a(X,L)-h^0_a(X, L-T)$ is the difference of two (non-restricted) continuous rank functions.
\end{rem}

The fundamental property of the continuous rank is the following (notation as in \S \ref{ssec: trick}):
\begin{prop}[Multiplicativity of the continuous rank]\label{prop: mult-rank}

In the above set-up, assume that $a_{|T}$ is strongly generating. Then for every $d>0$   one has:
 $$ h^0_{a_d}({X\up{d}}_{|T\ud}, L\ud)=d^{2q}h^0_a(X_{|T},L).
 $$
\end{prop}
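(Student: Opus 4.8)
The plan is to combine the projection formula with the standard decomposition of the pushforward of the structure sheaf under the multiplication isogeny (the covering trick of \cite{Pa}), and then to run a genericity argument on $\Pic^0(A)$. First I would record the basic structure of the cover. Since $\mu_d\colon A\to A$ is an isogeny of degree $d^{2q}$ with kernel $A[d]$, and diagram \eqref{diag:0} is cartesian, the map $\wt{\mu_d}\colon X\ud\to X$ is a finite étale Galois cover with group $G=A[d]$, and it restricts to an étale cover $T\ud\to T$ of the same degree. The crucial input is the decomposition $(\mu_d)_*\OO_A\iso\bigoplus_{\alpha\in\Pic^0(A)[d]}\alpha$, valid because the characters of $A[d]$ are identified with $\Pic^0(A)[d]$ via the Weil pairing (cf. \cite{LB}). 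Applying flat base change along $a$ and along $a_{|T}$ to the cartesian square, I would obtain $(\wt{\mu_d})_*\OO_{X\ud}\iso\bigoplus_{\alpha\in\Pic^0(A)[d]}\alpha$ on $X$ and $(\wt{\mu_d})_*\OO_{T\ud}\iso\bigoplus_{\alpha\in\Pic^0(A)[d]}\alpha_{|T}$ on $T$; here the irreducibility of $X\ud$ and $T\ud$, which holds because $a$ and $a_{|T}$ are strongly generating, guarantees that these really are the full Galois decompositions.

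Next I would fix $\beta\in\Pic^0(A)$ and, using that multiplication by $d$ is surjective on $\Pic^0(A)$, choose $\gamma\in\Pic^0(A)$ with $\beta=d\gamma$. Since $a\circ\wt{\mu_d}=\mu_d\circ a_d$ and $\mu_d^*\gamma=d\gamma$ on $\Pic^0(A)$, this gives $a_d^*\beta=\wt{\mu_d}^*\gamma$, hence $L\ud\tensor\beta=\wt{\mu_d}^*(L\tensor\gamma)$. The projection formula then produces
$$(\wt{\mu_d})_*(L\ud\tensor\beta)\iso\bigoplus_{\alpha\in\Pic^0(A)[d]}(L\tensor\gamma\tensor\alpha),$$
and the analogous statement on $T$, so that both $H^0(X\ud,L\ud\tensor\beta)$ and $H^0(T\ud,(L\ud\tensor\beta)_{|T\ud})$ split as direct sums over $\alpha\in\Pic^0(A)[d]$ of the spaces $H^0(X,L\tensor\gamma\tensor\alpha)$ and $H^0(T,(L\tensor\gamma\tensor\alpha)_{|T})$ respectively. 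The restriction maps are $G$-equivariant, so they respect the character decomposition, and the image of the restriction on $X\ud$ is the direct sum of the images on $X$ of the individual summands. This yields the identity
$$h^0({X\ud}_{|T\ud},L\ud\tensor\beta)=\sum_{\alpha\in\Pic^0(A)[d]}h^0(X_{|T},L\tensor\gamma\tensor\alpha).$$

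Finally I would argue genericity. Let $U\subseteq\Pic^0(A)$ be a dense open set on which $h^0(X_{|T},L\tensor\xi)=h^0_a(X_{|T},L)$, and set $W=\Pic^0(A)\setminus U$. As $\mu_d$ is finite and surjective, $\mu_d(W)$ is a proper closed subset, and for $\beta\notin\mu_d(W)$ the whole fiber $\mu_d^{-1}(\beta)=\{\gamma\tensor\alpha\}_{\alpha\in\Pic^0(A)[d]}$ lies in $U$; hence each of the $d^{2q}=\#\Pic^0(A)[d]$ summands equals $h^0_a(X_{|T},L)$, and the displayed sum equals $d^{2q}h^0_a(X_{|T},L)$ on a dense open set, which is the asserted generic value.

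The hard part, I expect, will not be the genericity step (which is routine once $\mu_d$ is seen to be finite surjective on $\Pic^0(A)$) but rather the reduction of the \emph{restricted} rank to a sum over eigensheaves: one must verify that the restriction map ${H^0(X\ud,\cdot)\to H^0(T\ud,\cdot)}$ is genuinely $G$-equivariant so that taking images commutes with the character splitting, and that the pushforward decomposition persists after restriction to $T$. This last point is exactly where the strong generation of $a_{|T}$, and the consequent irreducibility of $T\ud$, are indispensable.
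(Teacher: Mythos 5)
Your argument is correct, but it is not the route the paper takes. The paper disposes of the proposition in two lines: the absolute case $T=X$ is quoted from \cite[Prop.~2.8]{barja-severi}, and the restricted case is then reduced to the absolute one via Remark \ref{rem: difference} --- blow up $X$ along $T$ and normalize, so that $T$ becomes a Cartier divisor and $h^0_a(X_{|T},L)=h^0_a(X,L)-h^0_a(X,L-T)$ is a difference of two absolute continuous ranks, each of which is multiplicative. You instead prove the statement from scratch and treat the restricted rank directly: the isogeny decomposition $(\mu_d)_*\OO_A\iso\bigoplus_{\alpha\in\Pic^0(A)[d]}\alpha$, base change and the projection formula give the eigenspace splittings of $H^0(X\ud,L\ud\otimes\beta)$ and of $H^0(T\ud,(L\ud\otimes\beta)_{|T\ud})$; $G$-equivariance of the restriction map splits its image accordingly; and since multiplication by $d$ on $\Pic^0(A)$ is finite and surjective, the jumping locus has proper closed image, so for general $\beta$ all $d^{2q}$ summands take the generic value. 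The individual steps check out, in particular the identity $a_d^*\beta=\wt{\mu_d}^*\gamma$ when $\beta=d\gamma$, and the fact that the image of an equivariant map is the direct sum of the images of the isotypic pieces. What your route buys is self-containedness: it essentially inlines the proof of the cited result of \cite{barja-severi} and extends it to the restricted setting in one stroke, avoiding the blow-up and normalization; the equivariance mechanism you use is moreover the same one the paper itself deploys later in Lemma \ref{lem:galois}. What the paper's route buys is brevity and reuse of an established result. One inaccuracy in your closing commentary: strong generation of $a_{|T}$ is not what makes ``the pushforward decomposition persist after restriction to $T$'' --- pushforward along a finite morphism commutes with arbitrary base change, so that decomposition is automatic. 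Strong generation (on $d$-torsion) is exactly what makes $X\ud$ and $T\ud$ irreducible; your numerical identity would in fact hold without it, irreducibility being required by the surrounding framework of the paper rather than by this computation.
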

\begin{proof}
When $T=X$, the claim is \cite[Prop.~2.8]{barja-severi}. The general case follows from this   in view of Remark \ref{rem: difference}.
\end{proof}

\begin{rem}\label{rem: strongly} Let    $a\colon X\to A$ be a morphism and  let $T\subseteq X$ be a subvariety such that
the natural maps $a^*\colon \pic^0(A)\to \pic^0(X)$ and  $a_{|T}^*\colon \pic^0(A)\to \pic^0(T)$ have the same kernel (for instance, this holds if the restriction map $\pic^0(X)\to \pic^0(T)$ is injective). Denote by $A'$   the abelian variety dual to $\pic^0(A)/\ker (a^*)$: the map  $a$ factorizes as $X\overset{a'}{\to}A'\overset{f}{\to} A$, where ${a'}_{|T}$ is a strongly generating  and $f$ is a morphism of abelian varieties. Since  one has $h^0_a(X_{|T},L)=h^0_{a'}(X_{|T},L)$, when proving  statements about  the continuous rank one can  assume that $a_{|T}$ is strongly generating and use the machinery of \S \ref{ssec: trick}.
\end{rem}

\subsection{Continuous resolution of the base locus}\label{ssec: continuous-res}
Here we  recall,   and slightly refine,   the results of  \cite[\S~3]{barja-severi}.  \smallskip

Let  $X$ be a   normal   variety of dimension $n$,   let $a\colon X\to A$ be  a morphism  to an abelian variety of dimension $q$ and let $L$ be a line bundle on $X$ with $h^0_a(X, L)>0$.  We  denote by $U_{\rk}\subseteq  \Pic^0(A)$ the open set consisting of the $\alpha$'s such that $h^0(X, L \alpha)$ is equal to the minimum $h^0_{a}(X, L)$.
The {\em continuous evaluation map} of $L$  is defined as
\begin{equation}\label{eq: eval-map}
{ev}\colon \oplus_{\alpha\in U_{\rk}}H^0(X, L\otimes \alpha)\otimes\alpha^{-1}\to  L
\end{equation}
and we denote by $S$ the subscheme  of $X$ such that the image of $ev$ is equal to $\mathcal I_SL$.

Let $\sigma\colon \widetilde X\rightarrow X$ be a smooth modification of $X$ such that   $\sigma^*\mathcal I_S=\OO_{\wt X}(-D)$ for some effective divisor $D$.
We call $D$  the {\it continuous fixed part} and $P:=\sigma^*L(-D)$ the  {\it continuous moving part} of $L$.

Assume that $a$ is strongly generating; then there exists a positive  integer $d$ such that, denoting by $g_{d}\colon {\widetilde X}\up{d\,}\rightarrow \widetilde X$  the connected \'etale cover induced by   $\mu_{d}\colon A\rightarrow A$ (see  diagram \eqref{diag:below} below),
\begin{equation}\label{diag:below}
\xymatrix{
\widetilde X\up{d\,}\ar[d]_{\tilde{a}_{d}} \ar@/^1pc/[rr]^\lambda\ar[r]_{g_{d}}&\widetilde X\ar[d]_{\tilde a}\ar[r]_\sigma &X\ar[ld]^a\\
A\ar[r]_{\mu_{d}}&A&}
\end{equation}
\
 we have
\begin{itemize}
\item for all $\alpha \in \pic^0(A)$, the system  $|P\up{d}\otimes \alpha|$ is free;
\item for  $\alpha\in \pic^0(A)$ general, $D\up{d}$ and $|P\up{d}\otimes \alpha|$ are   the fixed  and moving part, respectively,  of $|\lambda^*L\otimes \alpha|$. \end{itemize}

\begin{rem}\label{rem: continuous-restricted}
Consider now a subvariety $T\subseteq X$ such that    $T$ is not contained in $\sing X$ and $h^0_a(X_{|T},L)>0$.
It is possible to choose the modification $\sigma\colon \wt X\to X$ in such a way that $T$ is not contained in the exceptional locus of $\sigma\inv$ and therefore the strict transform $\wt T\subseteq \wt X$ is defined.
\end{rem}
\subsection{Galois group of maps}\label{ssec:aux}
We give here some   general results that we use later.
\medskip

 Given a generically finite dominant map $f\colon X\to Y$ of irreducible varieties  we denote by $\Gal(f)$ the group of birational automorphisms $\phi$ of $X$ such that $f\circ \phi=f$, namely $\Gal(f)$ is the Galois group of the field extension $\C(Y)\subseteq \C(X)$.
 \begin{prop}\label{prop: composto involuzione}
Let $f\colon X\to Y\subseteq \pp^r$ be a generically finite dominant morphism  of  varieties of dimension $\ge 2$; let $K$ be a  hyperplane section of $Y$, let $\Sigma =f^{\inv}K$ and let $h\colon \Sigma \to K$ be the restricted map.

If $K$ is general then the restriction homomorphism  $\Gal(f)\to \Gal(h)$ is an isomorphism.
\end{prop}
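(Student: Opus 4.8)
\emph{Plan.} The statement is a Lefschetz-type phenomenon for monodromy, so I would translate everything into covering-space language and reduce it to the surjectivity of $\pi_1$ on a general hyperplane section. Since $\Gal(f)$ and $\Gal(h)$ depend only on the function-field extensions $\C(Y)\subseteq\C(X)$ and $\C(K)\subseteq\C(\Sigma)$, I may freely normalize and pass to dense open subsets. First I would choose a smooth dense open $Y^\circ\subseteq Y$ over which $f$ restricts to a finite étale morphism $f^\circ\colon X^\circ\to Y^\circ$ with $X^\circ$ smooth; this is possible after normalizing and removing the (proper closed) branch locus and the locus where $f$ fails to be finite. Fixing a base point $y_0\in Y^\circ$ and the fiber $F=f^{-1}(y_0)$, the connected covering $f^\circ$ is encoded by the monodromy representation $\rho\colon\pi_1(Y^\circ,y_0)\to\mathrm{Sym}(F)$, whose image acts transitively because $X$ is irreducible. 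By the standard dictionary between connected coverings and transitive $\pi_1$-sets, the group $\Gal(f)=\Aut(\C(X)/\C(Y))$ is canonically identified with the centralizer $C_{\mathrm{Sym}(F)}(\rho(\pi_1(Y^\circ,y_0)))$, an element $\phi\in\Gal(f)$ corresponding to the permutation $\phi|_F$ of the fiber.

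Next I would do the same for $h$. Taking $K$ general, Bertini guarantees that $K^\circ:=K\cap Y^\circ$ is a smooth (irreducible) dense open subvariety of $K$, so that $h$ is birationally the restriction of $f^\circ$ over $K^\circ$, with the \emph{same} base point $y_0\in K^\circ$ and the \emph{same} fiber $F$. Its monodromy is $\rho\circ\iota_*$, where $\iota_*\colon\pi_1(K^\circ,y_0)\to\pi_1(Y^\circ,y_0)$ is induced by the inclusion. The key input is the Lefschetz hyperplane theorem for fundamental groups of quasi-projective varieties: since $\dim Y^\circ=\dim Y\ge 2$, for $K$ general the map $\iota_*$ is surjective. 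In particular $\rho(\pi_1(K^\circ,y_0))=\rho(\pi_1(Y^\circ,y_0))$, so the $\pi_1(K^\circ)$-action on $F$ is still transitive; this shows that $\Sigma^\circ=f^{-1}(K^\circ)$ is connected, hence $\Sigma$ is irreducible and $h$ is a well-defined generically finite dominant map. Moreover the same dictionary identifies $\Gal(h)$ with $C_{\mathrm{Sym}(F)}(\rho(\pi_1(K^\circ,y_0)))$.

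Finally I would match the two identifications. An element $\phi\in\Gal(f)$, viewed as an automorphism of $X^\circ$ over $Y^\circ$, preserves $\Sigma^\circ=f^{-1}(K^\circ)$ and restricts there to the element of $\Gal(h)$ given, on the common fiber $F$, by the very same permutation $\phi|_F$. Thus under the identifications above the restriction homomorphism $\Gal(f)\to\Gal(h)$ is precisely the inclusion $C_{\mathrm{Sym}(F)}(\rho(\pi_1(Y^\circ)))\subseteq C_{\mathrm{Sym}(F)}(\rho(\pi_1(K^\circ)))$ of centralizers in $\mathrm{Sym}(F)$. Since the two monodromy images coincide by the Lefschetz theorem, these centralizers are equal and the inclusion is the identity; hence $\Gal(f)\to\Gal(h)$ is an isomorphism. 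The only substantial ingredient, and the step I expect to be the crux, is the surjectivity $\iota_*\colon\pi_1(K^\circ)\twoheadrightarrow\pi_1(Y^\circ)$ for a general hyperplane section in the quasi-projective setting (equivalently, that a connected finite covering of a variety of dimension $\ge 2$ remains connected with unchanged monodromy group after restriction to a general hyperplane section); this is exactly where the hypothesis $\dim\ge 2$ enters, and where one must invoke the quasi-projective Lefschetz theorem of Hamm--L\^e and Goresky--MacPherson rather than an elementary Bertini argument.
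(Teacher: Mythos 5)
Your argument is correct and is essentially the paper's own proof: there too one reduces to a finite \'etale cover of smooth quasi-projective varieties, identifies $\Gal$ with the topological deck group, and rests the whole weight on the Lefschetz-type surjectivity of $\pi_1$ for a general hyperplane section, for which the paper cites Theorem~1.1 of \cite{FL} exactly where you invoke Hamm--L\^e/Goresky--MacPherson. The remaining differences are only packaging: the paper describes the deck groups as normalizer quotients and applies the Fulton--Lazarsfeld theorem both upstairs ($\pi_1(\Sigma)\to\pi_1(X)$) and downstairs ($\pi_1(K)\to\pi_1(Y)$), together with its connectivity part (A) for the irreducibility of $\Sigma$, whereas your centralizer-of-monodromy formulation gets by with the single surjectivity $\pi_1(K^\circ)\twoheadrightarrow\pi_1(Y^\circ)$ and recovers the irreducibility of $\Sigma$ as a by-product --- just fix the harmless ordering slip by choosing the base point $y_0$ on $K^\circ$ \emph{after} the general $K$ has been fixed.
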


\begin{proof}
Possibly after removing a proper closed subset of $Y$  and its preimage in $X$, we may assume that $f\colon X\to Y$ is a finite  \'etale morphism of smooth varieties.
By \cite[Theorem~1.1~(A)]{FL}, if $K$ is general $\Sigma$ is irreducible, hence connected.

In this situation $\Gal(f)$ coincides with the group $\Gal_{top}(f)$ of deck  transformations of the topological cover $f$ and the same is true for $h$.
Choose base points $x_0\in \Sigma$ and $y_0=f(x_0) \in K$ and denote by $N_1$ (resp., $N_2$) the normalizer of $f_*\pi_1(X,x_0)$ (resp.,  $h_*\pi_1(\Sigma,x_0)$) in $\pi_1(Y,y_0)$ (resp., $\pi_1(K,y_0)$).
Then
the group $\Gal_{top}(f)$ (resp., $\Gal_{top}(h)$) can be identified with $N_1/(f_*\pi_1(X,x_0))$ (resp.,  $N_2/(h_*\pi_1(\Sigma,x_0))$). In the following commutative diagram
\[
\begin{CD}
\pi_1(\Sigma,x_0)@>>>\pi_1(X,x_0)\\
@V{h_*}VV @VV{f_*}V\\
\pi_1(K,y_0)@>>>\pi_1(Y,y_0)
\end{CD}
\]
the horizontal arrows are surjective by Theorem 1.1 (B) of \cite{FL}.
This shows that every element of $\Gal_{top}(h)$ extends to an element of $\Gal_{top}(f)$, namely the map $\Gal(f)\to \Gal(h)$ is surjective. Since $\Gal(f)\to \Gal(h)$ is  clearly  injective, this completes the proof.
\end{proof}

The next result is a straightforward generalization of \cite[Lem.~3.3]{LZ2}, but we include the proof for the reader's convenience (notation as in \S \ref{ssec: trick}).
\begin{lem}\label{lem: Gal-a}
 Let $X$ be smooth of general type and let $a\colon X\to A$ be a morphism to an abelian variety such that $a$ is strongly generating and $X$ has maximal $a$-dimension.

Then there exists a constant $C$ such that for every prime  $p>C$ and $d=p^k$, $k>0$, one has:
$$\Gal(a)=\Gal(a_d).$$
\end{lem}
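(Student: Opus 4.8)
The plan is to show that for suitable $d$ the Galois groups $\Gal(a)$ and $\Gal(a_d)$ coincide by exploiting the group structure of $A$ and the structure of the fiber product in diagram \eqref{diag:0}. First I would reduce the statement to a comparison of two field extensions. Since $X$ has maximal $a$-dimension and $a$ is strongly generating, the map $a\colon X\to A$ is generically finite onto its image, and $\Gal(a)$ is the Galois group of the Galois closure of the extension $\C(A)\subseteq \C(X)$ (more precisely we work with the normalization of $A$ in the Galois closure of $\C(X)/\C(A)$). The cover $X\up{d}\to X$ is the base change of $X\to A$ along $\mu_d\colon A\to A$, so $X\up{d}$ is a connected component of the fiber product $X\times_A A$ (with respect to $\mu_d$), and it is connected precisely because $a$ is strongly generating, as recalled in \S \ref{ssec: trick}. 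The key point is that base-changing by $\mu_d$ introduces extra covering data coming only from the $d$-torsion of $A$, and I want to arrange that this extra data does not enlarge the Galois group of the generically finite part.

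The main step is to separate the \'etale part of the cover from the part detecting maximal $a$-dimension. I would pass to the Galois closure $\wt X\to A$ of $a$ and let $G=\Gal(a)$ act on it; then $X$ is the quotient by a subgroup $H\le G$. Base change by $\mu_d$ commutes with taking Galois closure, so $\wt X\up{d}\to A$ is again Galois with group containing $G$, and the new part of the Galois group is governed by how $\mu_d$ interacts with the branch/monodromy data. Here is where the condition $d=p^k$ with $p$ a large prime enters: the finite monodromy group of the generically finite map $a$ has some fixed order, say $N=|\Gal(a)|$ or the order of a larger finite group controlling the situation, and choosing $C$ so that $p>C$ forces $p>N$, hence $\gcd(p,N)=1$. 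Since the new automorphisms introduced by $\mu_d$ have order a power of $p$ (they come from translations by $d$-torsion points, which on $A$ have $p$-power order), and the old group $\Gal(a)$ has order coprime to $p$, the two cannot interact: the $p$-power automorphisms act trivially on the function field $\C(X)$ up to the covering $X\up{d}\to X$, so they do not contribute to $\Gal(a_d)$. Thus $\Gal(a_d)$ maps isomorphically onto $\Gal(a)$.

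More concretely, I would argue as follows. The inclusion $\Gal(a_d)\supseteq \Gal(a)$ is clear, since any element of $\Gal(a)$ lifts: an automorphism $\phi$ of $X$ over $A$ induces an automorphism of $X\up{d}=X\times_A A$ over $A$ via $\phi\times\mathrm{id}$, which preserves the connected component $X\up{d}$ (using again strong generation to see there is a unique relevant component), and commutes with $a_d$. For the reverse inclusion I would take $\psi\in\Gal(a_d)$ and consider its image under $\wt\mu_d$; because $a_d\circ\psi=a_d$ and $a\circ\wt\mu_d=\mu_d\circ a_d$, the automorphism $\psi$ descends to a birational self-map of $X$ over $A$ \emph{after} quotienting by the deck group of $\wt\mu_d\colon X\up{d}\to X$, which is the group of translations by $\ker\mu_d=A[d]$ that lift to $X\up{d}$. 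The lifted translation subgroup has order a power of $p$ (since $A[d]=A[p^k]$ has order $p^{2qk}$), whereas the quotient $\Gal(a_d)/(\text{translations})$ embeds in $\Gal(a)$; a counting/coprimality argument with $p>C\ge|\Gal(a)|$ then forces the translation part of $\Gal(a_d)$ to be trivial inside $\Gal(a_d)$, giving $\Gal(a_d)=\Gal(a)$.

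The hard part will be making the claim that $\Gal(a_d)$ is generated by $\Gal(a)$ together with $p$-power-order translation automorphisms fully rigorous, i.e., controlling exactly which $d$-torsion translations lift from $A$ to honest birational automorphisms of $X\up{d}$ commuting with $a_d$, and verifying that the resulting extension of $\Gal(a)$ by a $p$-group splits off cleanly so that the coprimality (Schur--Zassenhaus type) argument applies. In particular one must check that no nontrivial element of $\Gal(a)$ becomes identified with a translation after base change — this is exactly where the generically finite (maximal $a$-dimension) hypothesis and strong generation are used, since they guarantee that $\C(X)$ is not itself obtained from $\C(A)$ by an isogeny, so the finite Galois part and the torsion-translation part have coprime orders once $p$ is large. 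I would fix the constant $C$ at the end as, say, $C=|\Gal(a)|$ (or any bound exceeding the order of the relevant finite monodromy group), which guarantees $p\nmid|\Gal(a)|$ for all primes $p>C$.
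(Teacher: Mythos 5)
Your guiding idea --- make the covering data introduced by $\mu_d$ a $p$-group and play it off against a finite group of order prime to $p$ attached to $a$ --- is sound, and it can even be completed into a proof more elementary than the paper's. But as written your argument has a genuine gap at the decisive step, plus a structural misconception. The gap is the descent claim: for $\psi\in\Gal(a_d)$ to descend to $X=X\up{d}/G$, where $G\cong\Z_d^{2q}$ is the deck group of $\wt{\mu_d}$, you need $\psi$ to \emph{normalize} $G$ inside $\mathrm{Bir}(X\up{d})$, and this is not implied by $a_d\circ\psi=a_d$; it is precisely the content of the lemma. (The paper manufactures it by quoting Hacon--McKernan--Xu: with $C=M\vol_X(K_X)$ one has $|\Aut_{bir}(X\up{d})|\le d^{2q}C<d^{2q}p$, so $G$ is the \emph{unique} $p$-Sylow subgroup of $\Aut_{bir}(X\up{d})$, hence normal in the whole birational automorphism group; this is where smoothness and general type enter.) The misconception is your description of $\Gal(a_d)$: a deck transformation $t_\tau\colon(x,\alpha)\mapsto(x,\alpha+\tau)$ with $0\neq\tau\in A[d]$ satisfies $a_d\circ t_\tau=t_\tau\circ a_d\neq a_d$, so $G\cap\Gal(a_d)=\{1\}$ and translations contribute \emph{nothing} to $\Gal(a_d)$; there is no extension of $\Gal(a)$ by a $p$-group to split by Schur--Zassenhaus. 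The real danger is different: after base change, genuinely new automorphisms can appear because the Galois closure of $\C(X)/\C(a(X))$ may partially coincide with the abelian extension $a_d^*\C(A)$. For instance, if $a\colon X\to A$ is a non-Galois triple cover whose $S_3$-Galois closure contains the \'etale double cover of $A$ defined by some $0\neq\eta\in\Pic^0(A)[2]$, then $\Gal(a)=\{1\}$, but since every connected \'etale double cover of $A$ is dominated by $\mu_2$ (equivalently $\mu_2^*\eta=2\eta=0$), the quadratic resolvent becomes trivial after base change and $X\up{2}\to A$ is a \emph{cyclic} cubic cover, so $\Gal(a_2)\cong\Z_3$. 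This shows your primary choice $C=|\Gal(a)|$ cannot work: the constant must dominate the order of the full monodromy group of the Galois closure (e.g.\ $C=(\deg a)!$ suffices), not just $|\Gal(a)|$.

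Both repairs implicit in your sketch do go through, and both avoid HMX (hence also the smoothness and general-type hypotheses). (1) Field-theoretic: let $\Omega$ be the Galois closure of $\C(X)/\C(Y)$, $Y=a(X)$, with group $\Gamma$ and $H=\Gal(\Omega/\C(X))$, and take $C=|\Gamma|$. The extension $\C(Y\up{d})/\C(Y)$ is abelian with group $A[d]$, a $p$-group, so $\Omega\cap\C(Y\up{d})$ is Galois over $\C(Y)$ with group a common quotient of $\Gamma$ and of a $p$-group, hence trivial once $p>|\Gamma|$; linear disjointness then identifies $\Gal(a_d)$ with $N_\Gamma(H)/H=\Gal(a)$, compatibly with the natural inclusion. (2) Group-theoretic, closest to your Sylow instinct and to the paper: replace $\Aut_{bir}(X\up{d})$ by the group $\wt G=\{\lambda\in\mathrm{Bir}(X\up{d})\ :\ a_d\circ\lambda=t_\tau\circ a_d\ \text{for some}\ \tau\in A[d]\}$. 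Then $\lambda\mapsto\tau$ is a surjective homomorphism $\wt G\to A[d]$ with kernel $\Gal(a_d)$, so $\wt G$ is finite of order $d^{2q}\,|\Gal(a_d)|\le d^{2q}\deg a$. Taking $C=\deg a$, the deck group $G$ is a $p$-Sylow subgroup of $\wt G$, and the number of $p$-Sylows divides $|\Gal(a_d)|<p$ and is $\equiv 1\pmod p$, so $G\trianglelefteq\wt G$; now your descent step is justified, and the inverse homomorphism $\Gal(a_d)\to\Gal(a)$ is obtained exactly as in the paper. In one sentence, the idea you were missing is that some \emph{finite} group containing both $G$ and $\Gal(a_d)$ must be exhibited before any Sylow or coprimality argument can run: the paper buys such a group with the HMX boundedness theorem, while the cheaper purchase is $\wt G$ above.
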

\begin{proof}
Let $n:=\dim X$. By the main result of Hacon, McKernan and Xu in \cite{HMX},   there exists a constant $M$ such that for every $n$-dimensional variety  $Y$ of general type the order of the group $\Aut_{bir}(Y)$ of birational automorphisms of $Y$  is $\le M\vol_Y (K_Y)$.

Take $C=M\vol_X(K_X)$, let  $p>C$ be  a  prime and $d=p^k$ a power of $p$.
The Galois group $G\cong \Z_d^{2q}$ of $\wt {\mu_d}$ is a subgroup of $\Aut_{bir}(X\up{d})$, and it is a $p$-Sylow subgroup, because
$$|\Aut_{bir}(X\up{d})|\le M \vol_{X\up{d}}(K_{X\up{d}})=Md^{2q}\vol_X(K_X)=d^{2q}C<d^{2q}p.$$
In addition, since the number of $p$-Sylow subgroups is a divisor of  $\frac{|\Aut_{bir}(X\up{d})|}{d^2q}<p$  and is congruent to $1$ modulo $p$, it follows that  $G$ is the only $p$-Sylow subgroup, namely $G$ is a normal subgroup.

So every birational automorphism of $X\up{d}$, and thus in particular every element of $\Gal(a_d)$, descends to an automorphism of $X$. So we have a homomorphism $\Gal(a_d)\to \Gal(a)$ that is the inverse of the natural inclusion  $\Gal(a)\to \Gal(a_d)$.
\end{proof}

\subsection{Factorization of  morphisms}
In this subsection varieties are irreducible but not necessarily projective. The results  here are certainly well known but, lacking a suitable reference,  we spell them out  here for the reader's convenience.

\begin{lem}\label{lem: fGalois}
Let $f\colon X\to Y$ be  a  finite morphism of  varieties with $Y$ normal. If  $g\colon X\to Z$ is a  morphism  such that $g(f\inv(y))$ is a point for general $y\in Y$,  then $g(f\inv(y))$ is a point for all $y\in Y$.
\end{lem}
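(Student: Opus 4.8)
The plan is to reduce the statement to a genuine factorization $g=\psi\circ f$ through a \emph{morphism} $\psi\colon Y\to Z$: once this is established, $g(f^{-1}(y))=\{\psi(y)\}$ is a single point for every $y$, which is exactly the assertion. We may assume $f$ is surjective, since over a point outside $f(X)$ the fibre is empty and the statement is vacuous.

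First I would form the graph morphism $h:=(f,g)\colon X\to Y\times Z$ and set $W:=h(X)$ (with its reduced structure). Since $f=p_1\circ h$ is proper (finite morphisms are proper) and the projection $p_1\colon Y\times Z\to Y$ is separated, the cancellation property of proper morphisms shows that $h$ itself is proper; hence $W$ is closed in $Y\times Z$, it is an irreducible variety, and $h\colon X\to W$ is surjective.

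Next I would analyse $\pi:=p_1|_W\colon W\to Y$. For each $y\in Y$ one has $\pi^{-1}(y)=h(f^{-1}(y))$, the image of a finite set, so $\pi$ is quasi-finite; moreover $\pi$ is proper, by applying the cancellation law a second time to $f=\pi\circ h$ with $h$ surjective and $\pi$ separated. A proper quasi-finite morphism is finite, so $\pi$ is finite. The hypothesis that $g(f^{-1}(y))$ is a single point for general $y$ means precisely that the general fibre $\pi^{-1}(y)=\{y\}\times g(f^{-1}(y))$ consists of a single point; working over $\C$ this forces $\deg\pi=1$, i.e.\ $\pi$ is finite and birational.

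The decisive input is now the normality of $Y$: a finite birational morphism onto a normal variety is an isomorphism (Zariski's Main Theorem). Hence $\pi$ is an isomorphism, and putting $\psi:=p_2\circ\pi^{-1}\colon Y\to Z$ gives $g=p_2\circ h=p_2\circ\pi^{-1}\circ f=\psi\circ f$, which yields the conclusion for all $y$. The main obstacle is precisely the verification that $\pi$ is finite when $Z$ is not assumed proper: this is handled by the two applications of the cancellation property for proper morphisms (first to $h$, then to $\pi$), after which the reduction to an isomorphism via normality is immediate.
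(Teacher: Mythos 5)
Your proof is correct, but it takes a genuinely different route from the paper's. The paper's argument is a short Galois trick: replace $f$ by its Galois closure, so that (using normality of $Y$) every fibre of $f$ becomes an orbit of the Galois group $\Gamma$, and then observe that for each $\gamma\in\Gamma$ the coincidence locus $\{x \mid g(\gamma x)=g(x)\}$ is closed and contains a dense open set by the generic hypothesis, hence equals $X$; this upgrades the statement from general $y$ to all $y$ in one stroke. You instead prove the stronger assertion that $g$ factors as $\psi\circ f$ through a morphism $\psi\colon Y\to Z$, by showing that the projection $\pi\colon W\to Y$ from the closed image $W=(f,g)(X)\subseteq Y\times Z$ is finite and birational, hence an isomorphism by Zariski's Main Theorem. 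Amusingly, this is exactly the construction the paper reserves for the \emph{next} statement (Lemma \ref{lem: factors}), whose hypothesis is the ``all $y$'' condition: where the paper first proves the present lemma by the Galois trick and then deduces the factorization separately, you extract the factorization directly from the ``general $y$'' hypothesis and read off the lemma as a byproduct. Two minor remarks on your write-up: your second appeal to ``cancellation'' is really descent of universal closedness along the surjection $h$ (if $\pi\circ h$ is universally closed and $h$ is surjective, then $\pi$ is universally closed; combined with $\pi$ separated and of finite type this gives properness) rather than the cancellation lemma proper --- the fact is true and standard, only the name is off; and your degree-one conclusion uses characteristic zero (generic separability of $\pi$), which is harmless here but worth flagging. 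The trade-off: your argument is self-contained and proves more (the factorization, which is what the lemma is ultimately used for), at the cost of the properness bookkeeping forced by $Z$ not being proper; the paper's argument is shorter and keeps the generic-to-everywhere step and the descent step as two clean, separate lemmas.
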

\begin{proof}
Up to replacing $f\colon X\to Y$ with its Galois closure, we may assume that $X$ is normal  and $f$ is Galois with Galois group $\Gamma$. In particular, the fibers of $f$ are $\Gamma$-orbits. Now the claim follows since for all $\gamma\in \Gamma$  the set $\{x\ |\ g(\gamma x)=g(x)\}$ is closed by definition and contains a nonempty open set by assumption, and therefore it is equal to $X$.
\end{proof}
\begin{lem}\label{lem: factors}
Let $f\colon X\to Y$ and  $g\colon X\to Z$ be   proper   morphisms of   varieties.  If   $Y$ is normal,  $f$ is surjective and   $g(f\inv(y))$ is a point for all $y\in Y$, then $g$ descends to a morphism $\bar g\colon Y\to Z$.
\end{lem}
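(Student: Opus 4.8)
The plan is to realise $\bar g$ as the second projection composed with the inverse of a projection from the image of the combined morphism $(f,g)$, the key being that this projection turns out to be an isomorphism because $Y$ is normal.

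First I would form the morphism $h:=(f,g)\colon X\to Y\times Z$. Since $Z$ is separated (being a variety), the graph morphism $(\mathrm{id}_X,g)\colon X\to X\times Z$ is a closed immersion, and $f\times\mathrm{id}_Z\colon X\times Z\to Y\times Z$ is proper because $f$ is proper; hence $h=(f\times\mathrm{id}_Z)\circ(\mathrm{id}_X,g)$ is proper. Let $W:=h(X)$, which is a closed subvariety of $Y\times Z$, irreducible because $X$ is, and let $p\colon W\to Y$ and $q\colon W\to Z$ be the restrictions of the two projections, so that $p\circ h=f$ and $q\circ h=g$ while $h\colon X\to W$ is proper and surjective.

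Next I would verify that $p\colon W\to Y$ is a finite bijective morphism. It is separated, being the restriction to $W$ of the projection $Y\times Z\to Y$, and surjective since $f=p\circ h$ is. For each $y\in Y$ the fibre is $p^{-1}(y)=\{y\}\times g(f^{-1}(y))$, a single point by hypothesis, so $p$ is bijective and in particular quasi-finite. Moreover $p$ is proper: this is the standard cancellation property, since $f=p\circ h$ is universally closed and $h$ is surjective (so images of closed sets upstairs detect images of closed sets on $W$), and $p$ is separated of finite type. A proper quasi-finite morphism being finite, $p$ is finite.

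Finally, since we work over $\C$, every finite morphism is generically \'etale, so the number of points in a general fibre of $p$ equals $[\C(W):\C(Y)]$; bijectivity forces this degree to be $1$, i.e. $p$ is birational. As $Y$ is normal, a finite birational morphism onto $Y$ is an isomorphism, because integral closedness of $\mathcal{O}_Y$ in $\C(Y)$ forces $p_*\mathcal{O}_W=\mathcal{O}_Y$. Thus $p$ is an isomorphism, and I would set $\bar g:=q\circ p^{-1}\colon Y\to Z$, which gives $\bar g\circ f=q\circ p^{-1}\circ f=q\circ h=g$, as required. The main obstacle, and the only non-formal step, is precisely this passage from \emph{bijective} to \emph{isomorphism}: it is here that both the normality of $Y$ and the characteristic-zero hypothesis are indispensable, whereas without normality $p$ would merely be a normalisation-type map and $\bar g$ could fail to be a morphism.
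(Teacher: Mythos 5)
Your proof is correct and takes essentially the same route as the paper's: both form the image $W$ of the product morphism $(f,g)$ inside $Y\times Z$, observe that the first projection $W\to Y$ is a proper bijective morphism, and use normality of $Y$ (in characteristic zero) to conclude that this projection is an isomorphism, so that $\bar g$ is the second projection composed with its inverse. The only difference is that you spell out the steps the paper leaves implicit, namely properness of $(f,g)$, the cancellation argument for properness of the projection, finiteness, and the passage from bijective to birational via generic \'etaleness.
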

\begin{proof}
Denote by $T\subseteq Y\times Z$ the image of the product morphism $f\times g$, which is a closed subset. The first projection $Y\times Z\to Y$ restricts to a proper bijective morphism $\pi\colon T\to Y$. Since $Y$ is normal, $\pi$ is an isomorphism and  $\bar g$ is obtained by composing $\pi\inv$ with the morphism $T\to Z$ induced by the second projection.
\end{proof}
\begin{cor}\label{cor: rational}
Let $f\colon X\to Y$ and  $g\colon X\to Z$ be   proper   morphisms of   varieties.  If     $f$ is surjective and   $g(f\inv(y))$ is a point for  $y\in Y$ general, then $g$ descends to a rational map  $\bar g\colon Y\to Z$.
\end{cor}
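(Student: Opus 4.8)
The plan is to reproduce the construction used in the proof of Lemma \ref{lem: factors}, but to extract only a rational factorization so that the normality hypothesis on $Y$ can be discarded. First I would form the morphism $(f,g)\colon X\to Y\times Z$ and check it is proper: it factors as the graph embedding $X\to X\times Z$, $x\mapsto(x,g(x))$, which is a closed immersion since $Z$ is separated, followed by $f\times\mathrm{id}_Z\colon X\times Z\to Y\times Z$, which is the base change of the proper morphism $f$ and hence proper. Consequently its image $T\subseteq Y\times Z$ is a closed irreducible subset, exactly as in Lemma \ref{lem: factors}. I would write $p\colon X\to T$ for the (surjective) corestriction and $\pi\colon T\to Y$ for the map induced by the first projection, so that $f=\pi\circ p$.

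Next I would argue that $\pi$ is proper and birational onto $Y$. Here a small subtlety arises: properness of $\pi$ does \emph{not} follow by projecting off the $Z$-factor, because $Z$ is not assumed complete; instead it follows from the fact that $f=\pi\circ p$ is proper while $p$ is surjective, together with $\pi$ being separated and of finite type (the standard descent of properness along a surjection). Surjectivity of $\pi$ is inherited from that of $f$. For the fibers one has $\pi^{-1}(y)=\{y\}\times g(f^{-1}(y))$ for every $y\in Y$, and by hypothesis this is a single point for $y$ general; thus $\pi$ is generically injective, and being a dominant, generically injective morphism of irreducible varieties (in characteristic zero) it is birational.

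Finally, a proper birational morphism of varieties is an isomorphism over a dense open subset $V\subseteq Y$. I would then set
$$\bar g:=\pr_Z\circ(\pi|_{\pi^{-1}(V)})^{-1}\colon V\to Z,$$
which is a morphism and hence defines a rational map $\bar g\colon Y\to Z$. Over $f^{-1}(V)$ one checks $\bar g\circ f=g$, because there $\pi^{-1}(f(x))=(f(x),g(x))$ and applying $\pr_Z$ returns $g(x)$; so $g$ indeed descends to $\bar g$.

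The step I expect to be the main obstacle is establishing properness of $\pi$, precisely because $Z$ need not be complete, so one cannot argue directly from $T$ being closed in $Y\times Z$; the descent-of-properness argument along the surjection $p$ is what makes this go through. Everything else is formal, and no normality assumption on $Y$ is needed, since we only produce a rational rather than a regular factorization and may therefore replace $Y$ by the dense open set over which the proper birational morphism $\pi$ is an isomorphism.
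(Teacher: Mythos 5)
Your proof is correct, but it takes a genuinely different route from the paper's, whose own proof is a two-line reduction: since ``general'' means on a dense open subset, the paper replaces $Y$ by a \emph{smooth} (hence normal) open subset $Y_0$ over which $g(f\inv(y))$ is a point for \emph{every} $y$, replaces $X$ by $X_0=f\inv(Y_0)$, and then applies Lemma \ref{lem: factors} as a black box, obtaining a morphism $Y_0\to Z$, which is the desired rational map. You instead rerun the construction inside the proof of Lemma \ref{lem: factors} --- the closed image $T\subseteq Y\times Z$ of $(f,g)$ and the projection $\pi\colon T\to Y$ --- but swap out its key mechanism: where the Lemma uses ``a proper bijective morphism onto a \emph{normal} variety is an isomorphism'', you use ``a proper, dominant, generically injective morphism is (in characteristic zero) birational, hence an isomorphism over a dense open $V\subseteq Y$''. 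Both arguments are sound. The paper's reduction buys brevity, at the price of importing normality through the smooth locus; your version buys two things: it shows that the normality hypothesis of Lemma \ref{lem: factors} is genuinely irrelevant once one only wants a rational (rather than everywhere-defined) factorization, and it makes explicit the properness of $\pi$ via descent of properness along the surjection $p$ --- a point the paper's Lemma asserts without justification and which, as you correctly observe, does require an argument since $Z$ is not assumed complete. A minor side benefit: your argument only ever uses properness of $f$, whereas the paper's reduction formally invokes the Lemma for $g_{|X_0}$, which need not be proper after restricting to the open set $X_0$; this is harmless (the Lemma's proof never actually uses properness of $g$), but your route never raises the issue.
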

\begin{proof}
Follows from Lemma \ref{lem: factors} by replacing $Y$ by a smooth  open subset $Y_0$ such that $g(f\inv(y))$ for all $y\in Y_0$ and $X$ by $X_0=f\inv(Y_0)$.
\end{proof}

\section{The eventual map and the eventual degree}\label{sec: eventual}

Throughout all the section we fix:
\begin{itemize}
\item   a normal variety $X$ of dimension $n$ and  a subvariety $T\subseteq X$ of dimension $m>0$ such that $T$  is not contained in $\sing X$;
\item  a morphism $a\colon X\to A$ to an abelian variety  of dimension $q$ such that $a_{|T}$  is   strongly generating;
\item a line bundle  $L$ on $X$ with $h^0_a(X_{|T}, L)>0$.
\end{itemize}
We use freely the notation introduced in \S \ref{sec: preliminaries}.

\subsection{Eventual degree of a line bundle}\label{ssec: ev-degree}

\begin{defn}\label{def: eventual prop}
We say that  a certain property holds {\em generically}  for $L$ (with respect to $a$) iff it holds for $L \otimes \alpha$  for general $\alpha\in\Pic^0(A)$; similarly, we say that a property holds {\em eventually}   for $L$ (with respect to $a$) iff  it holds for $L\ud$ for $d\gg 0$.
\end{defn}

For instance, we say that $|L|_{|T}$ is {\em eventually generically birational } if  for $d\gg 0$  the system   $|L\ud\otimes \alpha|_{|T}$ is  birational for general $\alpha\in \Pic^0(A)$.

Using the above terminology, we formulate a partial  analogue of \cite[Thm.~3.6]{barja-severi} that we use repeatedly:
\begin{prop}\label{prop: big}
In the above setup,  if $T$ has maximal $a_{|T}$-dimension, then  $|L|_{|T}$ eventually gives a generically finite map.
In particular,  $L_{|T}$ is big.
\end{prop}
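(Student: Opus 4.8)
The plan is to use the continuous resolution of the base locus (\S\ref{ssec: continuous-res}) to replace $|L\ud\otimes\alpha|_{|T\ud}$ by a \emph{free} system, and then to exclude, by a direct argument on $A$, that the associated morphism contracts $T\ud$. I would first observe that the bigness assertion follows formally once generic finiteness is known. Indeed, suppose that for some $d$ and general $\alpha$ the system $|L\ud\otimes\alpha|_{|T\ud}$ induces a generically finite map $\psi\colon T\ud\map\pp^N$, so that $\dim\psi(T\ud)=m$. Then $\psi^*\OO(1)$ is big, being the pull-back of an ample bundle under a generically finite map onto an $m$-dimensional image, and $\psi^*\OO(1)\le (L\ud\otimes\alpha)_{|T\ud}$ because $\psi$ is defined by a subsystem. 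Hence $(L\ud\otimes\alpha)_{|T\ud}$ is big; since $\alpha\in\Pic^0(A)$ is numerically trivial and $\wt{\mu_d}_{|T\ud}\colon T\ud\to T$ is finite \'etale of degree $d^{2q}$ (so that restricted volumes are multiplied by $d^{2q}$), this gives $\vol_{X|T}(L)>0$, i.e. $L_{|T}$ is big.

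For the main assertion I would apply \S\ref{ssec: continuous-res} together with Remark \ref{rem: continuous-restricted}: choosing the modification $\sigma\colon\wt X\to X$ so that $\wt T$ is defined and passing to a suitable level $d$, the system $|P\ud\otimes\alpha|$ is free for \emph{all} $\alpha\in\Pic^0(A)$ and, for general $\alpha$, it is the moving part of $|\lambda^*L\otimes\alpha|$. Thus the map defined by $|L\ud\otimes\alpha|_{|T\ud}$ is birationally the restriction $\psi_\alpha:=\phi_\alpha|_{\wt T\ud}$, where $\phi_\alpha\colon\wt X\ud\to\pp^N$ is the morphism attached to the free system $|P\ud\otimes\alpha|$. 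Here $\wt T\ud$ still has maximal $\tilde a_d$-dimension and $\tilde a_d$ is strongly generating, since these properties are preserved under modifications and under the multiplication covers (\S\ref{ssec: trick}).

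Now fix a general $\alpha_0$ and assume, for contradiction, that $\psi_{\alpha_0}$ is \emph{not} generically finite. Let $F$ be a reduced irreducible component of a general fibre of $\psi_{\alpha_0}$; then $\dim F>0$ and $(P\ud\otimes\alpha_0)_{|F}\iso\OO_F$, because $\phi_{\alpha_0}$ contracts $F$. Since $\tilde a_d$ is generically finite, $F$ is carried by $\tilde a_d$ onto a positive-dimensional subvariety of $A$; hence the abelian subvariety $A_0\subseteq A$ generated by $\tilde a_d(F)$ is positive-dimensional, and the restriction $\Pic^0(A)\to\Pic^0(F)$ has positive-dimensional image, as it factors through the surjection $\Pic^0(A)\to\Pic^0(A_0)$ followed by the injection $\Pic^0(A_0)\into\Pic^0(F)$ induced by the surjection $\Alb(F)\to A_0$. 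The relation $(P\ud\otimes\alpha_0)_{|F}\iso\OO_F$ gives $P\ud_{|F}\iso(\alpha_0^{-1})_{|F}$, so that $(P\ud\otimes\beta)_{|F}\iso(\beta\alpha_0^{-1})_{|F}$ for every $\beta\in\Pic^0(A)$. On the other hand, for general $\beta$ the free system $|P\ud\otimes\beta|$ has a section not vanishing at a general point of $F$, whose restriction shows that $(\beta\alpha_0^{-1})_{|F}$ is effective. As $\beta$ varies, $\gamma:=\beta\alpha_0^{-1}$ ranges over general elements of $\Pic^0(A)$, so $\gamma_{|F}$ is a general element of the positive-dimensional image of $\Pic^0(A)\to\Pic^0(F)$ and therefore $\gamma_{|F}\not\iso\OO_F$; but a nontrivial numerically trivial line bundle on the projective variety $F$ has no sections. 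This contradiction shows that $\psi_{\alpha_0}$ is generically finite for general $\alpha_0$, and, since freeness and generic finiteness persist under the further covers, the same holds at every higher level, which proves the claim.

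The main obstacle is concentrated in the third paragraph. The crucial points are that a contracted fibre $F$ must be \emph{horizontal} over $A$ — this is exactly where maximal $a_{|T}$-dimension enters — so that $\Pic^0(A)$ restricts non-trivially to $F$, and that freeness of $|P\ud\otimes\beta|$ for \emph{all} $\beta$, not merely for general $\beta$, is what turns this into the forbidden effectivity of a general numerically trivial bundle. Arranging the reduction so that both facts are simultaneously available, and in particular verifying that the restricted system $|L\ud\otimes\alpha|_{|T\ud}$ is genuinely birationally equivalent to $\psi_\alpha$, is the delicate bookkeeping the argument requires.
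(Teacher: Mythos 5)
Your proposal is correct and follows essentially the same route as the paper's proof: both reduce via the continuous resolution of the base locus (\S\ref{ssec: continuous-res}) and a multiplication cover to the case where all twists $|P\otimes\beta|$ are free, and both then exploit the fact that on a positive-dimensional contracted fibre $F$ the restrictions of these free systems are numerically trivial twists, which is incompatible with maximal $a_{|T}$-dimension (the paper phrases this as: all $\alpha_{|F}$ are trivial, hence $a(F)$ is a point; you phrase it contrapositively via effectivity of a general element of the image of $\Pic^0(A)\to\Pic^0(F)$). One small imprecision, harmless here: the dual of the surjection $\Alb(F)\to A_0$ has finite kernel but need not be injective, though positive-dimensionality of the image--which is all you use--still holds.
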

\begin{proof}
By Remark \ref{rem: continuous-restricted}, we may argue as in the proof of \cite[Thm.~3.6]{barja-severi} and  reduce to the case when $X$ is smooth and,  up to taking base change with a suitable multiplication map,   $|L\otimes \alpha |$ is a free system for every $\alpha\in \pic^0(A)$.
To prove the claim, we show that  under these assumptions if $F$ is a connected component of a general  fiber of the map  $\fie\colon T\to Z$ induced by $|L|_{|T}$,  then $a(F)$ is a point.

Indeed, the line bundle $L_{|F}$ is trivial, since $|L|$ is base point free, hence $(L\otimes\alpha)_{|F}$ and $\alpha_{|F}$  are  linearly  equivalent for every $\alpha\in \Pic^0(A)$. Since $|L\otimes \alpha|$ is also free, it follows that  $(L\otimes\alpha)_{|F}=\alpha_{|F}$ is  trivial for every $\alpha\in \Pic^0(A)$, namely the map $\pic^0(A)\to\pic^0(F)$ is trivial. So it follows that $a$ is constant on $F$.
\end{proof}

For any  given integer $d>0$, we denote by $\fie\upq{d} \colon T\up{d} \to Z\upq{d}$ the dominant map  induced by $|L\ud|_{|T\ud}$.

We start with a very useful  technical result:
\begin{lem}\label{lem:galois}
Let $d>0$ be a fixed integer and denote by $G\cong \Z_d^{2q}$ the Galois group of $\mu_d$. Then:
\begin{enumerate}
\item $G$ acts faithfully on $Z\upq{d}$;
\item if $|L\otimes \eta|_{|T}$ is base point free for every $\eta\in\Pic^0(A)[d]$, then $G$ acts freely on $Z\upq{d}$.
\end{enumerate}
\end{lem}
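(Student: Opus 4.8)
The plan is to exploit the action of the Galois group $G\cong A[d]$ of $\mu_d$ (equivalently of $\wt{\mu_d}$), whose character group is $\Pic^0(A)[d]$ via the Weil pairing. Since $\wt{\mu_d}\circ\phi=\wt{\mu_d}$ for every deck transformation $\phi\in G$, the bundle $L\up{d}=\wt{\mu_d}^*L$ carries a canonical $G$-linearization, and $G$ preserves $T\up{d}$; hence $G$ acts linearly on $W:=H^0(X\up{d}_{|T\up{d}},L\up{d})$ and the map $\fie\upq{d}$ is $G$-equivariant for the resulting projective action on $Z\upq{d}\subseteq\pp(W^\vee)$. The first step is the standard eigenspace decomposition: because $\wt{\mu_d}^*\eta\cong\OO_{X\up{d}}$ for $\eta\in\Pic^0(A)[d]$, pullback identifies
\[
W=\bigoplus_{\eta\in\Pic^0(A)[d]}W_\eta,\qquad W_\eta:=H^0(X_{|T},L\otimes\eta),
\]
and $g\in G$ acts on $W_\eta$ by the scalar $\eta(g)$; so the transformation $\rho_g$ of $\pp(W^\vee)$ is diagonal with eigenvalue $\eta(g)$ on the block $W_\eta$.

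For (1) I would first record that $Z\upq{d}$ is \emph{nondegenerate} in $\pp(W^\vee)$, the defining sections being by construction a basis of the image of restriction, hence linearly independent on $T\up{d}$. Writing $S=\{\eta\in\Pic^0(A)[d]:W_\eta\neq 0\}$, the fixed locus of $\rho_g$ is the union of the linear subspaces $\pp(V_\lambda^\vee)$, $V_\lambda=\bigoplus_{\eta(g)=\lambda}W_\eta$. An irreducible nondegenerate variety contained in such a union must lie in a single $\pp(V_\lambda^\vee)$, and nondegeneracy then forces $V_\lambda=W$, i.e.\ a single eigenvalue. Thus $g$ acts trivially on $Z\upq{d}$ if and only if $\eta(g)$ is constant on $S$, i.e.\ $g\in\langle S-S\rangle^\perp$, and faithfulness is equivalent to
\[
\langle S-S\rangle=\Pic^0(A)[d].
\]

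Proving this generation statement from $h^0_a(X_{|T},L)>0$ is the key point, and the one I expect to be the main obstacle. Here I would argue that, since $h^0_a(X,L)=\min_\alpha h^0(X,L\otimes\alpha)\ge h^0_a(X_{|T},L)>0$, one has $h^0(X,L\otimes\alpha)>0$ for \emph{every} $\alpha$; reducing to the case where $T$ is a Cartier divisor (Remark \ref{rem: difference}) and writing $h^0(X_{|T},L\otimes\alpha)=h^0(X,L\otimes\alpha)-h^0(X,(L-T)\otimes\alpha)$, upper semicontinuity confines the complement $S^c$ to the proper jump locus $\{\alpha:h^0(X,(L-T)\otimes\alpha)\ge h^0_a(X,L)\}$. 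The genuinely delicate step is then to deduce that the $d$-torsion points lying off this proper closed subset cannot all sit inside a proper coset of $\Pic^0(A)[d]$. This is clear for $d\gg 0$ (the relevant eventual regime): a proper coset omits a positive proportion of the $d^{2q}$ torsion points, whereas a proper closed subset of $\Pic^0(A)$ carries only $O(d^{2q-2})$ of them, so the two cannot be nested once $d$ is large and divisible.

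For (2) I would reduce freeness to (1) together with base-point-freeness. If $|L\otimes\eta|_{|T}$ is base-point-free for every $\eta\in\Pic^0(A)[d]$, then a point of $T\up{d}$ is a base point of $|L\up{d}|_{|T\up{d}}$ only if its image in $T$ lies in $\bigcap_{\eta}\mathrm{Bs}\,|L\otimes\eta|_{|T}=\varnothing$; hence $\fie\upq{d}$ is a morphism and, at every point $t$, some section of $W_\eta$ is nonzero for each $\eta\in S$. Thus every $\eta\in S$ is ``active'' at every point, so a point $z=\fie\upq{d}(t)$ is fixed by $\rho_g$ exactly when $\eta(g)$ is constant on all of $S$, a condition independent of $z$. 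Consequently each $g$ fixes either all of $Z\upq{d}$ or none of it; by (1) the former forces $g=0$, so every $g\neq 0$ acts without fixed points, i.e.\ $G$ acts freely. Everything except the generation statement $\langle S-S\rangle=\Pic^0(A)[d]$ of part (1) is a formal consequence of the $G$-equivariant eigenspace picture, so that is where I expect the real work to lie.
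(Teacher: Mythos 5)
Your framework for part (1) is correct and genuinely different from the paper's: the $G$-linearization, the eigenspace decomposition $W=\bigoplus_{\eta}W_\eta$ with $W_\eta=\wt{\mu_d}^*H^0(X_{|T},L\otimes\eta)$, the fixed-locus argument (irreducibility plus nondegeneracy of $Z\upq{d}$ force a trivially-acting $g$ to act by a single scalar on $W$), and the resulting criterion that $G$ acts faithfully if and only if $\langle S-S\rangle=\Pic^0(A)[d]$, where $S=\{\eta\in\Pic^0(A)[d]\,:\,W_\eta\neq 0\}$, are all sound; so is your deduction of part (2), since base point freeness of every $|L\otimes\eta|_{|T}$ makes every eigenspace active at every point of $T\up{d}$, so that the fixed-point condition becomes independent of the point. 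The genuine gap is exactly where you flagged it: you verify the generation statement $\langle S-S\rangle=\Pic^0(A)[d]$ only for $d\gg 0$, while the lemma asserts faithfulness for an arbitrary \emph{fixed} $d>0$, and it is invoked in the paper at specific finite values of $d$ (for every $d$ in Lemma~\ref{lem:ev-degree}, and at a $d_0$ produced by continuous global generation in Proposition~\ref{prop:factorization} and Theorem~\ref{thm:factorization}). For a small fixed $d$ your semicontinuity step is vacuous: the proper closed jump locus $\{\alpha\,:\,h^0(X,(L-T)\otimes\alpha)\ge h^0_a(X,L)\}$ could a priori contain, say, half of the $2^{2q}$ points of $\Pic^0(A)[2]$, and then nothing in your argument prevents $S$ from lying in a proper coset. (Your asymptotic count also quietly uses a bound of type $O(d^{2q-2})$ for the number of $d$-torsion points on a proper closed subvariety, which is true but itself needs an argument.)

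It is instructive to see how the paper sidesteps this. Its proof of (1) picks a point $x\in T$ outside the base locus of $|L\otimes\eta|$ for \emph{every} $\eta\in\Pic^0(A)[d]$, and shows that the restriction map $H^0(X\up{d},L\ud)\to H^0(\wt{\mu_d}\inv(x),L\ud)$ is surjective: the target is the regular representation of $G$, and each eigenspace $V_\eta$ hits its isotypic piece nontrivially because a section $\sigma_\eta\in H^0(X,L\otimes\eta)$ with $\sigma_\eta(x)\neq 0$ pulls back to a section nonvanishing on the whole fiber. Hence the $G$-orbit $\wt{\mu_d}\inv(x)$ is separated by $|L\ud|_{|T\ud}$, its image in $Z\upq{d}$ is a free orbit, and faithfulness holds for the given $d$. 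Note that the existence of such an $x$ requires $h^0(X_{|T},L\otimes\eta)>0$ for every $\eta\in\Pic^0(A)[d]$, which is precisely the statement that your $S$ is all of $\Pic^0(A)[d]$; the paper uses this input implicitly, and it is harmless in all its applications because there $L$ has first been twisted by a general element of $\Pic^0(A)$ (or the relevant systems are continuously base point free). Under that same input your generation statement is trivial and your proof, too, works for every fixed $d$. So the defect is not your strategy but the attempt to dispense with this nonemptiness input: the counting substitute only yields the lemma in the regime $d\gg 0$, not for the fixed $d$ of the statement.
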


\begin{proof}
(i)
Since the group $G$ acts on $T\ud\subseteq X\ud$ and since the  line bundle $L\ud$, being a pull-back from $X$, has a natural $G$-linearization, there is an induced $G$-action on $Z\upq{d}$.

Let $x\in T$ be a point  that is not in the base locus of $|L\otimes \eta|$ for any $\eta\in\Pic^0(A)[d]$. We are going to show  that the points  of $W:=\wt{\mu_d}\inv(x)$ (which form a $G$-orbit of cardinality $d^{2q}$) are separated by $|L\ud|_{|T\ud}$, i.e., the natural restriction map $r\colon H^0(X\up{d}, L\ud)\to  H^0(W, {L\ud}_{|W})$ is surjective. The  map $r$ is  $G$-equivariant; the  $G$-eigenspaces of $H^0(X\up{d}, L\ud)$ are the subspaces $V_{\eta}:=\wt{\mu_d}^*H^0(X, L\otimes\eta)$ for  $\eta$  in $\Pic^0(A)[d]$, while   $H^0(W, {L\ud}_{|W})$ is isomorphic to the regular representation of $G$.  It follows that   $r$ is surjective iff for each $\eta\in \Pic^0(A)[d]$ its restriction to the  eigenspace $V_{\eta}$ is non-zero. By the choice of  $x$, for every $\eta\in \Pic^0(A)[d]$ we can find $\sigma_{\eta}\in H^0(X, L\otimes \eta)$ with $\sigma_{\eta}(x)\ne 0$; then $\tau_{\eta}:=\wt{\mu_d}^*(\sigma_{\eta})\in V_{\eta}$ does not vanish at any point of $W$, hence $r(\tau_{\eta})\ne 0$ and $r$ is surjective, as claimed.
\smallskip

(ii) In this case the argument given in (i) implies  that $|L\ud|_{|T\ud}$ is free and that all the $G$-orbits on $Z\upq{d}$ have cardinality  $d^{2q}$, and therefore $G$ acts freely.
\end{proof}

Let $d>0$ be an integer.
If   for  $\alpha\in \Pic^0(A)$ general the system $|L\ud\otimes \alpha|_{|T\ud}$  gives a generically finite map, then   we denote by $m_{L|T}(d)$ the degree of this map;  we set $m_{L|T}(d)=+\infty$ otherwise. When $T=X$ we drop $T$ from the notation and simply write $m_{L}$.

\begin{lem}\label{lem:ev-degree}
Assume that $d_1$ is an integer with $m_{L|T}(d_1)<+\infty$. If  $d_2$ is a multiple of $d_1$,  then $m_{L|T}(d_2)$ divides $m_{L|T}(d_1)$.
\end{lem}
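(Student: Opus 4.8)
The plan is to exploit the intermediate cover sitting between $X\up{d_2}$ and $X\up{d_1}$ together with a Galois descent argument. Write $e=d_2/d_1$. Since $\mu_{d_2}=\mu_{d_1}\circ\mu_e$, the cover $\wt{\mu_{d_2}}$ factors as $\wt{\mu_{d_1}}\circ\pi$, where $\pi\colon X\up{d_2}\to X\up{d_1}$ is the connected \'etale cover induced by $\mu_e$; its Galois group is $G=A[e]\cong(\Z/e)^{2q}$, and $\pi$ restricts to a connected \'etale $G$-cover $T\up{d_2}\to T\up{d_1}$ with $T\up{d_1}=T\up{d_2}/G$. Moreover $L\up{d_2}=\pi^*L\up{d_1}$. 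Because $\mu_e^*$ is surjective on $\Pic^0(A)$, I would write the general twist on $X\up{d_2}$ as $\alpha'=\mu_e^*\alpha=\pi^*\alpha$ with $\alpha\in\Pic^0(A)$ general, so that $L\up{d_2}\otimes\alpha'=\pi^*(L\up{d_1}\otimes\alpha)$ acquires the \emph{canonical} $G$-linearization (this is the reason for expressing $\alpha'$ as a pullback). Denote by $\psi\colon T\up{d_1}\to Z$ the map given by $|L\up{d_1}\otimes\alpha|_{|T\up{d_1}}$, of degree $m_1:=m_{L|T}(d_1)<+\infty$, and by $\phi\colon T\up{d_2}\to Z'$ the map given by $|L\up{d_2}\otimes\alpha'|_{|T\up{d_2}}$.

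Next I would record the $G$-equivariance and finiteness. The space $V:=H^0({X\up{d_2}}_{|T\up{d_2}},L\up{d_2}\otimes\alpha')$ is a genuine $G$-representation which, by the eigenspace decomposition used in the proof of Lemma \ref{lem:galois}, splits as $\bigoplus_{\eta\in\Pic^0(A)[e]}V_\eta$ with $V_\eta=H^0({X\up{d_1}}_{|T\up{d_1}},L\up{d_1}\otimes\alpha\otimes\eta)$; in particular the invariant part $V_0$ is exactly the system defining $\psi$. Hence $\phi$ is $G$-equivariant and descends to $\bar\phi\colon T\up{d_1}\to Z'/G$, and pulling back $\psi$'s system (the subsystem $V_0\subseteq V$) shows that $\psi\circ\pi$ factors through $\phi$. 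Since $\psi\circ\pi$ is generically finite ($\psi$ is generically finite and $\pi$ is finite), so is $\phi$, giving $m_2:=m_{L|T}(d_2)=\deg\phi<+\infty$.

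The main point, and what I expect to be the crux, is to show that $G$ acts \emph{freely} on a general fibre of $\phi$, by adapting the argument of Lemma \ref{lem:galois}(i). For $\alpha$ general the finitely many restricted systems $|L\up{d_1}\otimes\alpha\otimes\eta|_{|T\up{d_1}}$, $\eta\in\Pic^0(A)[e]$, are nonempty (by multiplicativity of the continuous rank, Proposition \ref{prop: mult-rank}, since $h^0_a(X_{|T},L)>0$) with proper base loci, so one may choose $x\in T\up{d_1}$ lying outside all of them. Then $\pi\inv(x)$ is a single free $G$-orbit of cardinality $e^{2q}$, and the $G$-equivariant restriction $V\to H^0(\pi\inv(x),\cdot)\cong\C[G]$ is surjective, because each character-eigenline is hit by the pullback $\tau_\eta$ of a section of $V_\eta$ not vanishing at $x$. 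Thus $\phi$ separates $\pi\inv(x)$ into $e^{2q}$ distinct points forming a free $G$-orbit, so $G$ acts generically freely on $Z'$ and the quotient $\rho\colon Z'\to Z'/G$ has degree $e^{2q}$.

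Finally I would conclude by a degree count. From the commutative square $\rho\circ\phi=\bar\phi\circ\pi$ and $\deg\pi=e^{2q}$ one gets $e^{2q}m_2=\deg\rho\cdot\deg\phi=\deg\bar\phi\cdot\deg\pi=e^{2q}\deg\bar\phi$, whence $m_2=\deg\bar\phi$. On the other hand, since $V_0$ consists of $G$-invariant sections, the function field $\C(Z)$ is generated inside $\C(T\up{d_1})$ by $G$-invariant elements of $\C(Z')$, so $\C(Z)\subseteq\C(Z')^G=\C(Z'/G)$; hence $\psi$ factors through $\bar\phi$, say $\psi=q\circ\bar\phi$ with $q$ dominant. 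Therefore $m_1=\deg\psi=\deg q\cdot\deg\bar\phi=\deg q\cdot m_2$, that is $m_2\mid m_1$, as claimed. The only delicate step is the free-action statement of the third paragraph; everything else is formal manipulation of covers, eigenspaces, and degrees of generically finite maps.
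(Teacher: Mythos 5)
Your proof is correct and is essentially the paper's own argument: viewing $X\up{d_2}$ as the connected cover of $X\up{d_1}$ induced by $\mu_e$ is precisely the paper's opening reduction (replace $(T,X,L)$ by $(T\up{d_1},X\up{d_1},L\up{d_1})$ and prove $m_{L|T}(e)\mid m_{L|T}(1)$), and your eigenspace/separation argument producing a free $G$-orbit on $Z'$ is exactly the proof of Lemma \ref{lem:galois}, which the paper simply cites. The only difference is in the packaging of the endgame: instead of constructing the quotient $Z'/G$ and the factorization $\psi=q\circ\bar\phi$ explicitly, the paper reads the divisibility off the degree identity $e^{2q}\,m_{L|T}(d_1)=m_{L|T}(d_2)\deg h$ combined with the injection $G\hookrightarrow\Gal(h)$ (so that $e^{2q}\mid\deg h$), which encodes the same Galois-theoretic content as your explicit intermediate quotient.
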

\begin{proof}
By replacing  $(T, X ,L)$   by $(T\up{d_1},X\up{d_1}, L^{(d_1)})$, we reduce to the case where  $m_{L|T}(1)<+\infty$ and we
 show that $m_{L|T}(d)$ divides $m_{L|T}(1)$ for every $d$.

Now fix $d$;
up to replacing $L$ by $L\otimes \alpha$ for a general choice of $\alpha\in \Pic^0(A)$, we may assume  that the map $\fie$ given by $|L|_{|T}$ and the map $\fie\upq{d}$ given by $|L\ud|_{|T\ud}$ are generically finite of  degree $m_{L|T}(1)$,  $m_{L|T}(d)$, respectively; as before, we denote by $Z$, $Z\upq{d}$ the image of $\fie$, $\fie\upq{d}$ respectively.
By  the following commutative diagram
\begin{equation}\label{diag:1}
\begin{CD}
T\up{d}@>\wt{\mu_d}>>T\\
@V{\fie\upq{d}}VV @VV\fie V\\
Z\upq{d}@>h>>Z
\end{CD}
\end{equation}
we have $d^{2q}m_{L|T}(1)=m_{L|T}(d)\deg h $. By Lemma \ref{lem:galois}, the Galois group $G$ of $\mu_d$ injects into the Galois group of $h$.
It follows that $d^{2q}$ divides $\deg h$, and therefore $m_{L|T}(d)$ divides $m_{L|T}(1)$.\end{proof}

\begin{defn}\label{defn: eventual degree}
We define the {\em eventual degree of $L$ (with respect to $a$ and $T$)} as:
$$m_{L|T}:=\min\{ m_{L|T}(d)\,|\,d\in \N^* \}.$$
\end{defn}
 Note that if $T$ is of maximal $a_{|T}$ -dimension, then one has  $m_{L|T}<+\infty$ by Proposition \ref{prop: big}.

 \begin{rem}
If $m_{L|T}<+\infty$, then by Lemma   \ref{lem:ev-degree}:
\begin{itemize}
\item eventually we have $m_{L|T}(d)=m_{L|T}$;
\item $m_{L|T}$ is  the greatest common divisor of all the $m_{L|T}(d)$.
\end{itemize}
\end{rem}

\subsection{The factorization theorem}

The main result of this section is the following:

\begin{thm}[Factorization theorem] \label{thm:factorization}
Let $X$ be a normal $n$-dimensional projective variety and let $a\colon X\to A$ be a morphism to a $q$-dimensional  abelian variety; let $T\subseteq X$ be a subvariety of dimension $m$ not contained in $\sing X$ and such that $a_{|T}$ is strongly generating.

If $L$ is  a line bundle on $X$ such that $h^0_a(X_{|T}, L)>0$ and $m_{L|T}<+\infty$,
then there exists a generically finite map $\fie\colon T\to Z$ of degree $m_{L|T}$,  uniquely determined up to birational equivalence, such that:
\begin{itemize}
\item[(a)] the map $a_{|T}\colon T \to A$ is composed with $\fie$;
\item[(b)] for  $d\ge 1$ denote by $\fie\up{d}\colon T\up{d}\to Z\up{d}$ the map obtained  from  $\fie\colon T\to Z$ by   taking base change with $\mu_d$;  then
 the map given by $|L\ud\otimes \alpha|_{|T\ud}$  is composed with $\fie\up{d}$ for $\alpha\in \Pic^0(A)$ general.
\end{itemize}
In particular, for $d\gg 0$  the map $\fie\up{d}$ is birationally equivalent to the map given by $|L\ud\otimes \alpha|_{|T\ud}$ for $\alpha\in \Pic^0(A)$ general.
\end{thm}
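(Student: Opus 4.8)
\emph{Strategy and reductions.} The plan is to construct $\fie$ by Galois descent from a single multiplication level at which the eventual degree is attained, and then to extract (a), (b) and uniqueness. First I would reduce to a convenient model: by Remark~\ref{rem: continuous-restricted} and the continuous resolution of the base locus (\S\ref{ssec: continuous-res}) I may pass to a smooth modification of $X$ and, after base change with $\mu_{d_0}$ for a suitable $d_0$, assume that for $d$ divisible by $d_0$ the moving part of $|L\ud\otimes\alpha|_{|T\ud}$ is free, so that $\fie\upq{d}$ is a morphism. By Definition~\ref{defn: eventual degree} and Lemma~\ref{lem:ev-degree} I enlarge $d_0$ so that $m_{L|T}(d)=m_{L|T}$ for all $d$ divisible by $d_0$; thus $\fie\upq{d_0}\colon T\up{d_0}\to Z\upq{d_0}$ is generically finite of degree exactly $m_{L|T}$.

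\emph{Construction and property (a).} The Galois group $G\cong\Z_{d_0}^{2q}$ of $\mu_{d_0}$ acts on $T\up{d_0}$, and by Lemma~\ref{lem:galois} it acts freely on $Z\upq{d_0}$ (the freeness hypothesis of part (ii) holding by the reduction above), with $\fie\upq{d_0}$ being $G$-equivariant. I then set $Z:=Z\upq{d_0}/G$ and let $\fie\colon T=T\up{d_0}/G\to Z$ be the induced map; since the covers $T\up{d_0}\to T$ and $Z\upq{d_0}\to Z$ both have degree $d_0^{2q}$, a degree count gives $\deg\fie=m_{L|T}$. To obtain (a), by Corollary~\ref{cor: rational} applied to $\fie$ and $a_{|T}$ it suffices that $a_{|T}$ be constant on a general fibre of $\fie$; pulling back along $\wt{\mu_{d_0}}$, this is exactly the assertion that $a_{d_0}$ is constant on a general fibre of $\fie\upq{d_0}$. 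Granting this, $a_{d_0}$ factors through $\fie\upq{d_0}$ as $Z\upq{d_0}\xrightarrow{\psi\upq{d_0}}A$; since $\ker\mu_{d_0}$ acts on $A$ by translations and $A/\ker\mu_{d_0}\cong A$ via $\mu_{d_0}$, the map $\psi\upq{d_0}$ descends to $\psi\colon Z\to A$ with $a_{|T}=\psi\circ\fie$, which is (a).

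\emph{The main obstacle.} The heart of the proof is precisely the factorization at level $d_0$, i.e. that $a_{d_0}$ is constant on a general fibre of $\fie\upq{d_0}$. Note that here $\fie\upq{d_0}$ is generically finite, so its general fibres are finite sets and the argument of Proposition~\ref{prop: big} (contraction of the connected components of the fibres) is vacuous; the genuine content is that the $m_{L|T}$ points of a general fibre share the same $a_{d_0}$-image. I would prove this through the complementary separation statement: if $x,x'\in T\up{d_0}$ satisfy $a_{d_0}(x)\ne a_{d_0}(x')$, then $|L\up{d_0}\otimes\alpha|_{|T\up{d_0}}$ separates $x$ and $x'$ for general $\alpha\in\Pic^0(A)$. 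This is where strong generation and the structure of $A$ enter decisively: the dependence on $\alpha$ of the evaluations at $x$ and at $x'$ is governed by the fibres $\mathcal P_{a_{d_0}(x)}$ and $\mathcal P_{a_{d_0}(x')}$ of the Poincar\'e bundle on $A\times\Pic^0(A)$, which are non-isomorphic line bundles over $\Pic^0(A)$ precisely because $a_{d_0}(x)\ne a_{d_0}(x')$; a base change and semicontinuity argument over $\Pic^0(A)$ then forces the joint evaluation $H^0(L\up{d_0}\otimes\alpha)\to(L\up{d_0}\otimes\alpha)_x\oplus(L\up{d_0}\otimes\alpha)_{x'}$ to have rank $2$ for general $\alpha$. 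The same argument shows that the map given by $|L\up{d_0}\otimes\alpha|_{|T\up{d_0}}$ is birationally independent of the general class $\alpha$, so that $\fie$ is well defined up to birational equivalence.

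\emph{Property (b), the final assertion and uniqueness.} Having (a), I base change $\psi\colon Z\to A$ along $\mu_d$ to define $\fie\up{d}\colon T\up{d}\to Z\up{d}$. At level $d_0$ the identifications above give a birational equivalence between $\fie\upq{d_0}$ and $\fie\up{d_0}$; comparing an arbitrary level $d$ with a common multiple $d'\gg 0$ of $d$ and $d_0$ by means of the commutative squares \eqref{diag:1} and the injectivity of the Galois action provided by Lemma~\ref{lem:galois}, and descending the level-$d'$ identification, one sees that the map given by $|L\ud\otimes\alpha|_{|T\ud}$ is composed with $\fie\up{d}$ for every $d$, which is (b). For $d\gg 0$ one has $m_{L|T}(d)=m_{L|T}=\deg\fie\up{d}$ by Lemma~\ref{lem:ev-degree}, so the factoring map has degree $1$ and $\fie\up{d}$ is birationally equivalent to the linear system map, the final assertion. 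Finally, any $\fie'$ satisfying (a) and (b) has, for $d\gg 0$, a base change $\fie'\up{d}$ birationally equivalent to the same linear system map as $\fie\up{d}$; since $\fie$ and $\fie'$ are recovered from these base changes as quotients by $G$, they are birationally equivalent, giving uniqueness.
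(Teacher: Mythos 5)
Your reduction steps, the construction of $Z$ as the quotient $Z\upq{d_0}/G$ via Lemma \ref{lem:galois}, the descent of the factorizing map to $A$, and the comparison of levels for property (b) and uniqueness all follow the same architecture as the paper's proof. The gap is exactly in the step you yourself single out as the heart of the matter: that $a_{d_0}$ is constant on a general fibre of $\fie\upq{d_0}$. Your separation argument does not work as stated. Suppose the joint evaluation $\mathcal F\to \cP_{a_{d_0}(x)}\oplus\cP_{a_{d_0}(x')}$ (with $\mathcal F$ the pushforward of the universal twisted bundle to $\Pic^0(A)$) has generic rank $1$. All that follows is that $\ker e_x=\ker e_{x'}$ over a dense open set $V$, i.e.\ that $\cP_{a_{d_0}(x)}$ and $\cP_{a_{d_0}(x')}$ become isomorphic \emph{after restriction to $V$}. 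But any two line bundles on an integral variety are isomorphic on a dense open subset, so the global non-isomorphism $\cP_{a_{d_0}(x)}\not\cong\cP_{a_{d_0}(x')}$ (coming from $a_{d_0}(x)\neq a_{d_0}(x')$) is no obstruction, and no semicontinuity or base-change argument can promote it to one: rank-$1$ degeneration is simply not excluded by the data you invoke. The statement you want is true, but it is essentially equivalent to the theorem itself; this is why the paper's Proposition \ref{prop:factorization}(b) goes through a genuinely global construction instead: it assembles the whole family of maps $\fie_\alpha$ over $U_{\rk}$ inside $\pp(\cV)$, descends it to $Z\unu\times U_{\rk}$ using Lemmas \ref{lem: fGalois} and \ref{lem: factors}, extracts from the resulting line bundle a homomorphism $\psi\colon\Pic^0(A)\to\Pic^0(\widehat Z)$, and builds the factorizing map $g\colon Z\to A$ from the dual homomorphism ${}^t\psi$ and the Albanese map of $\widehat Z$, identifying $a_{|T}$ with $g\circ\fie$ up to translation by comparing their pullbacks on $\Pic^0$.

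There is a second, related gap. Even granting separation, it concerns the maps given by $|L\up{d_0}\otimes\alpha|_{|T\up{d_0}}$ for \emph{general} $\alpha$, whereas the fibre you must contract is a fibre of $\fie\upq{d_0}$, the map attached to one particular twist. Bridging the two requires knowing that the fibre through a general point of the map $\fie_\alpha$ is independent of $\alpha\in U_{\rk}$; you assert this follows by ``the same argument,'' but it is a separate and nontrivial statement --- it is part (a) of Proposition \ref{prop:factorization}, which the paper proves by a different mechanism altogether: the continuous evaluation map of $\mathcal I_xL$ at a general point $x$, continuous global generation together with \cite{debarre-simple}, and the $G$-eigenspace count of Lemma \ref{lem:galois}, which together show that the continuous base locus through $x$ is exactly $\fie\inv(\fie(x))$. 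In short, your proposal reproduces the (correct) skeleton of the paper's proof, but the two load-bearing technical pillars --- the $\alpha$-independence of the fibres and the factorization of $a$ through $\fie$ --- are respectively asserted and incorrectly argued.
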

\begin{defn}\label{def:ev-map}
We call the map $\fie\colon T\to Z$ introduced in Theorem \ref{thm:factorization} the {\em eventual map} given by $L$ on $T$. Note that  eventual degree $m_{L|T}$ of $L$ is actually the degree of the eventual map. \end{defn}

In view of Proposition \ref{prop: big},  Theorem \ref{thm:factorization} has the following immediate consequence, which will be crucial for the arguments in the second part of the paper.
 \begin{cor} \label{cor:birat} Under the assumptions of Theorem \ref{thm:factorization} we have
\begin{enumerate}
%\item If $X$ is of maximal $a$-dimension, the linear system $|L|$ is generically big.
\item if $a_{|T}$ is generically injective, then the linear system $|L|_{|T}$ is eventually generically birational;
\item if $a_{|T}$ is not composed with an involution, then $m_{L|T}\ne 2$.
\end{enumerate}
\end{cor}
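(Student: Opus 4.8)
The plan is to derive both statements directly from the eventual map $\fie\colon T\to Z$ furnished by Theorem \ref{thm:factorization}, by examining its degree $m_{L|T}$ together with the factorization recorded in property (a). First I would verify that the hypotheses of Theorem \ref{thm:factorization} are in force. In case (i) the generic injectivity of $a_{|T}$ means in particular that $a_{|T}$ is generically finite, so $T$ has maximal $a_{|T}$-dimension; Proposition \ref{prop: big} then guarantees $m_{L|T}<+\infty$, so that the eventual map $\fie$ and its properties (a), (b) are available. In both cases property (a) supplies a map $h\colon Z\to A$ with $a_{|T}=h\circ\fie$, which is the only input the degree analysis needs.

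For (i), I would show that $\fie$ is forced to be birational. Pick a general $t\in T$ and set $z=\fie(t)$ and $p=a_{|T}(t)=h(z)$. Every point of the fibre $\fie\inv(z)$ maps to $p$ under $a_{|T}=h\circ\fie$, so $\fie\inv(z)\subseteq a_{|T}\inv(p)$; since $a_{|T}$ is generically injective, $a_{|T}\inv(p)=\{t\}$ for general $t$, whence $\fie\inv(z)=\{t\}$ and $\deg\fie=m_{L|T}=1$. Thus $\fie$ is birational, and since base change by the étale map $\mu_d$ preserves the degree of a generically finite dominant map, each $\fie\up{d}$ is birational as well. By the last assertion of Theorem \ref{thm:factorization}, for $d\gg0$ the map given by $|L\ud\otimes\alpha|_{|T\ud}$ is birationally equivalent to $\fie\up{d}$, hence birational, which is precisely the statement that $|L|_{|T}$ is eventually generically birational.

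For (ii), I would argue by contradiction, assuming $m_{L|T}=2$. Then $\fie\colon T\to Z$ is a dominant generically finite map of degree $2$; as every degree-$2$ field extension in characteristic zero is automatically Galois, $\Gal(\fie)\cong\Z_2$ is generated by a nontrivial birational involution $\sigma$ of $T$ satisfying $\fie\circ\sigma=\fie$. Composing the factorization $a_{|T}=h\circ\fie$ on the right with $\sigma$ then gives $a_{|T}\circ\sigma=h\circ\fie\circ\sigma=h\circ\fie=a_{|T}$, so $a_{|T}$ is composed with the involution $\sigma$, contradicting the hypothesis. Hence $m_{L|T}\ne2$.

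The argument is short precisely because Theorem \ref{thm:factorization} already does the substantive work; the only points demanding care are the preliminary verification, via Proposition \ref{prop: big}, that the eventual map exists in case (i), and, in case (ii), the observation that a degree-$2$ cover is automatically Galois so that $\fie$ produces a genuine (nontrivial) involution of $T$ through which $a_{|T}$ factors. Neither step involves any serious computation; the main conceptual content is simply the translation of ``eventual degree equal to $1$'' into ``eventually generically birational'' by means of the base-change compatibility in property (b).
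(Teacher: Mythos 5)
Your proof is correct and is exactly the argument the paper intends: the paper states this corollary without proof, calling it an ``immediate consequence'' of Theorem \ref{thm:factorization} and Proposition \ref{prop: big}, and your write-up supplies precisely that reasoning --- the factorization $a_{|T}=h\circ\fie$ forces $\deg\fie=1$ under generic injectivity in (i), and in (ii) a degree-$2$ eventual map yields a birational involution through which $a_{|T}$ factors (the same mechanism the paper itself uses later in Proposition \ref{prop: noncomposto}). Nothing to correct.
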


In order to prove Theorem \ref{thm:factorization}, we need to introduce some more notation  and  prove  a preliminary result.
Let $d>0$ be an integer; then:
\begin{itemize}
\item[--]  $U_{\rk}\ud\subseteq  \Pic^0(A)$ denotes  the nonempty open set consisting of the $\alpha$'s such that  $h^0(X\up{d}, L\ud\otimes \alpha)=h^0_{a_d}(X\up{d}, L\ud)$ and $h^0({X\up{d}}_{|T\ud}, L\ud\otimes \alpha)=h^0_{a_d}({X\up{d}}_{|T\ud}, L\ud)$;
\item[--] if $m_{L|T}(d)<+\infty$, then $U_{\deg}\ud $  denotes  the nonempty open subset of $U_{\rk}\ud $ consisting of the $\alpha$'s such that the map given by $|L\ud\otimes \alpha|_{|T}$ is generically finite of degree equal to $m_{L|T}(d)$.
\end{itemize}
We write  $U_{\rk}=U_{\rk}^{(1)}$ and $U_{\deg}=U_{\deg}^{(1)}$.
\begin{prop}\label{prop:factorization}
Assume that $|L\otimes \alpha|$ is base point free for every $\alpha\in \Pic^0(A)$ and that  $m_{L|T}(1)=m_{L|T}<+\infty$; then there exist a  variety $Z$ and  a surjective  generically finite morphism $\fie\colon T\to Z$ of degree $m_{L|T}$ such that:
 \begin{enumerate}
 \item[(a)]\label{a} the  dominant map $\fie_{\alpha}\colon T\to Z_{\alpha}$ induced by $|L\otimes \alpha|_{|T}$ is birationally equivalent  to  $\fie$ for every $\alpha\in U_{\deg};$
 \item[(b)]\label{b} the map $a_{|T}\colon T\to A$ is composed with $\fie$.
 \end{enumerate}
\end{prop}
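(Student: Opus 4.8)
The plan is to fix a general $\alpha_0\in U_{\deg}$ and set $\fie:=\fie_{\alpha_0}\colon T\to Z:=Z_{\alpha_0}$; this is a \emph{morphism} since $|L\otimes\alpha_0|$ is base point free, and it is generically finite of degree $m:=m_{L|T}$. I then prove assertions (a) and (b) for this $\fie$. Since twisting $L$ by a fixed element of $\Pic^0(A)$ alters neither the hypotheses nor any of the objects involved, I may normalize so that the ``generic'' statements hold at $\alpha_0$, and I am free to establish (b) for all $\alpha$ in a dense subset of $U_{\deg}$ and then specialize to $\alpha_0$. By Corollary \ref{cor: rational}, assertion (b) — that $a_{|T}$ is composed with $\fie$ — is equivalent to the assertion that $a_{|T}$ is constant on the general fibre of $\fie_{\alpha_0}$; this is the nontrivial content precisely because $m>1$ in general, so the fibres carry several points that $a$ must identify.

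The heart of the matter is the following separation statement: when $|L\otimes\alpha|$ is base point free for every $\alpha\in\Pic^0(A)$ — equivalently, when the continuous base locus $S$ of \S\ref{ssec: continuous-res} is empty — two general points $t,t'\in T$ with $a(t)\neq a(t')$ are separated by $|L\otimes\alpha|_{|T}$ for general $\alpha$. The mechanism is that twisting by $\Pic^0(A)$ detects exactly the $A$-coordinate: the continuous evaluation map $\oplus_\alpha H^0(X,L\otimes\alpha)\otimes\alpha^{-1}\to L$ of \eqref{eq: eval-map} is everywhere surjective (as $S=\emptyset$), while the comparison of the fibres of $\alpha$ at $t$ and at $t'$, as $\alpha$ ranges over $\Pic^0(A)=\hat A$, is nontrivial exactly when $a(t)\neq a(t')$; hence some, and then by openness a general, $\alpha$ separates $t$ and $t'$. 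Granting this, a general fibre of $\fie_\alpha$ cannot contain two points with different $a$-image, so $a_{|T}$ is constant on the general fibre of $\fie_\alpha$ for general $\alpha$, and (b) follows from Corollary \ref{cor: rational}. I expect this cohomological/positivity step — the only place where base point freeness for \emph{all} of $\Pic^0(A)$ is essentially used — to be the main obstacle.

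For (a) I must show that the general fibre of $\fie_\alpha$, viewed as a subset of $T$, is independent of $\alpha\in U_{\deg}$; equivalently, that the subfield $\C(Z_\alpha)\subseteq\C(T)$ does not depend on $\alpha$. By (b) each $\fie_\alpha$ factors the \emph{fixed} morphism $a_{|T}$, so if $T\xrightarrow{p}T'\xrightarrow{f}A$ is the Stein factorization of $a_{|T}$ one has $\C(A)\subseteq\C(T')\subseteq\C(Z_\alpha)\subseteq\C(T)$ with $[\C(T):\C(Z_\alpha)]=m$. On a general fibre $F$ of $p$ the class $\alpha=a^*(\cdot)$ restricts trivially, so $(L\otimes\alpha)_{|F}=L_{|F}$ for every $\alpha$; I would use this to see that the restriction of $\fie_\alpha$ to $F$, the ``vertical'' part of the fibre structure, stabilizes to an $\alpha$-independent map for general $\alpha$. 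The remaining ``horizontal'' part is governed by the intermediate fields between $\C(A)$ and $\C(T')$, of which there are only finitely many because $f$ is generically finite. Thus $\alpha\mapsto\C(Z_\alpha)$ takes finitely many values on the irreducible parameter space $U_{\deg}$, and a specialization/local-constancy argument forces it to be constant. Making this finiteness precise when $a_{|T}$ is not itself generically finite is the only delicate point here.

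Consequently all the maps $\fie_\alpha$, $\alpha\in U_{\deg}$, share the general fibre of $\fie_{\alpha_0}$ and are therefore birationally equivalent to $\fie$, which is (a); replacing $Z$ by its normalization makes $\fie$ a surjective generically finite morphism of degree $m$, as required, and (b) was arranged in the choice of $\alpha_0$. As an independent check on the construction, for any $d>0$ Lemma \ref{lem:galois} shows that $G\cong\Z_d^{2q}$ acts freely on $Z\upq{d}$, and the degree count in diagram \eqref{diag:1} gives $\deg h=d^{2q}$; hence $h$ is the quotient map and $Z$ is identified with $Z\upq{d}/G$, which is exactly the behaviour required of the eventual map in Theorem \ref{thm:factorization}.
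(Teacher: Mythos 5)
Your proposal inverts the paper's order of argument: you want to prove (b) first, via a separation statement, and then deduce (a) by field theory. Both halves have genuine gaps, and the first one is fatal as written. The separation statement --- which you yourself identify as the heart of the matter --- is never proved: the sentence ``the comparison of the fibres of $\alpha$ at $t$ and at $t'$ \dots is nontrivial exactly when $a(t)\neq a(t')$'' restates the claim rather than argues for it. If one tries to make it precise, non-separation of $t,t'$ for a dense set of $\alpha$ gives (after base-change considerations over $U_{\rk}$) an isomorphism between the restrictions to $\{a(t)\}\times U_{\rk}$ and $\{a(t')\}\times U_{\rk}$ of the Poincar\'e bundle; this only says that the line bundle on $\Pic^0(A)$ corresponding under biduality to the point $a(t)-a(t')\in A$ is trivial on the \emph{open} set $U_{\rk}$, which does not force $a(t)=a(t')$, since that class could equally be represented by a numerically trivial divisor supported on $\Pic^0(A)\setminus U_{\rk}$. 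Worse, even granting your statement with its stated quantifiers (general pair, then general $\alpha$), the deduction of (b) breaks on a quantifier clash: the pairs $(t,t')$ lying in a common fibre of $\fie_\alpha$ form a subvariety of $T\times T$ of dimension $\dim T$, so they are never ``general pairs''; and the set of $\alpha$'s separating a given pair depends on that pair, whereas you need separation by the very $\alpha$ whose fibre contains it. A dimension count on the incidence variety $\{(t,t',\alpha):\fie_\alpha(t)=\fie_\alpha(t'),\ a(t)\neq a(t')\}$ shows no contradiction arises this way. What is actually needed is the statement uniform in $\alpha$: every section of $L\otimes\alpha$, for \emph{every} $\alpha\in U_{\rk}$, vanishing at a general $x\in T$ vanishes on the whole fibre through $x$ of the map given by $|L|_{|T}$. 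This is exactly what the paper's proof of (a) supplies, via continuous global generation of $\mathcal I_BL$, the result of \cite{debarre-simple}, and the free action of $G\cong\Z_d^{2q}$ on $Z\upq{d}$ from Lemma \ref{lem:galois}; there is no shortcut around this positivity input.

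The field-theoretic argument for (a) also contains an error and a gap you partly acknowledge. From (b) one only gets $\C(a(T))\subseteq\C(Z_\alpha)\subseteq\C(T)$; your inclusion $\C(T')\subseteq\C(Z_\alpha)$ is false: when $a_{|T}$ is generically finite, the Stein factorization $p\colon T\to T'$ is birational, so $\C(T')=\C(T)$ and your chain would force $m_{L|T}=1$. A concrete counterexample is a bielliptic curve $a\colon C\to E$ of large genus with $L=a^*N$, $3\le\deg N<g(C)-1$: all hypotheses of the proposition hold and $m_L=2$, yet $\C(T')=\C(C)\not\subseteq\C(Z_\alpha)=\C(E)$. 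The salvageable version of your idea --- that $\C(Z_\alpha)$ is an intermediate field of the finite extension $\C(T)/\C(a(T))$, of which there are finitely many in characteristic $0$ --- works only when $a_{|T}$ is generically finite, which the proposition does not assume: for instance $X=T=E\times C$, $a=\pr_1$, $L=\pr_1^*D\otimes\pr_2^*N$ with $\deg D\ge 3$ and $N$ base point free satisfies every hypothesis, and in the non-algebraic extension $\C(E\times C)/\C(E)$ there may be infinitely many subfields of a fixed finite index (compare $\C(x,(y+g(x))^2)\subseteq\C(x,y)$ for varying $g$). Finally, finiteness of values plus irreducibility of $U_{\deg}$ would give constancy of $\alpha\mapsto\C(Z_\alpha)$ only on a dense subset, while (a) requires it for every $\alpha\in U_{\deg}$, so a further specialization argument would still be owed; the paper obtains this uniformity directly, because its evaluation-map argument treats all $\alpha\in U_{\rk}$ at once.
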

\begin{proof}
(a)
Note first that by Lemma \ref{lem:ev-degree}, the condition $m_{L|T}(1)=m_{L|T}$ implies  $m_{L|T}(d)=m_{L|T}$ for every $d>0$.
 Up to twisting $L$  by  a very general  element of $\Pic^0(A)$,  we may assume that  $0\in U_{\deg}\ud$ for every $d\ge 1$;
we denote by $\fie\colon T\to Z$ the surjective   generically finite morphism  of degree $m_{L|T}$  given by $|L|_{|T}$. We  pick  $x\in T$ general and
consider the following continuous evaluation map on $X$:
\begin{equation}
{ev}\colon \oplus_{\alpha\in U_{\rk}}H^0(X,\mathcal I_xL\otimes \alpha)\otimes\alpha^{-1}\longrightarrow \mathcal I_x{L}
\end{equation}

The image of ${ev}$  is equal to $\mathcal I_B{L}$,  where   $B$ is a subscheme of $X$ such that
$\{x\}\subseteq  B_{|T}\subseteq  \fie^*(\fie(x))$; since $x$ is general, $\fie^*(\fie(x))=\fie\inv(\fie(x))$ is   reduced of cardinality $m_{L|T}$, hence $B_{|T}$ is also reduced and has cardinality  $\nu\le m_{L|T}$. We wish to prove that $\nu=m_{L|T}$ and therefore that $B_{|T}=\fie\inv(\fie(x))$.

Arguing  as in the proof of \cite[Lem.~3.2]{barja-severi} one can prove that $\mathcal I_B{L}$ is continuously globally generated with respect to $a$.   So by \cite[Prop.~3.1]{debarre-simple} there exists $d$ such that $\wt{\mu_d}^*(\mathcal I_B{L})\otimes\alpha$ is generated by global sections for all $\alpha\in \Pic^0(A)$. Write $B\ud:=\wt{\mu_d}^*B$, so that $\wt{\mu_d}^*(\mathcal I_B{L})=\mathcal I_{B\ud}L\ud$.

Let   $\fie\upq{d}\colon T\up{d}\to Z\upq{d}$ be the surjective morphism induced by $|L\ud|_{|T\ud}$.
By Lemma \ref{lem:galois}
the map $h$ in diagram \eqref{diag:1}  is an \'etale $G$-cover,  where   $G\cong \Z_d^{2q}$ is the Galois group of $\wt{\mu_d}$. The $0$-dimensional scheme $ {B\ud}_{|T\ud}=\wt{\mu_d}^*(B_{|T})$ is reduced of cardinality  $\nu d^{2q}$, since $B_{|T}$ is reduced of cardinality $\nu$ and $\wt{\mu_d}$ is \'etale. On the other hand, ${B\ud}_{|T\ud}$  is the base locus of $|\mathcal I_{B\ud}{L\ud}|_{|T\ud}\subseteq |L\ud|_{|T\ud}$, since  $\mathcal I_{B\ud}L\ud=\wt{\mu_d}^*(\mathcal I_BL)$ is generated by global sections.  Since  $L\ud$ is also generated by global sections, it follows that ${B\ud}_{|T\ud}$
is a union    of fibers of the map $\fie\upq{d}$.
Since in addition  ${B\ud}_{|T\ud}$ is  $G$-stable, it contains the set
$$(h\circ \fie\upq{d})\inv(\fie(x))={\wt\mu_d}\inv(\fie \inv(\fie(x))),$$
which has cardinality $d^{2q}m_{L|T}$.  Since $\nu \le m_{L|T}$, we conclude that $\nu=m_{L|T}$,  ${B\ud}_{|T\ud}={\wt\mu_d}\inv(\fie \inv(\fie(x)))$ and $B_{|T}=\fie\inv(\fie(x))$. Summing up, we have proven that for general  $x\in T$  the fiber $\fie\inv(\fie(x))$ is mapped to a point by $\fie_{\alpha}$ for every $\alpha\in U_{\rk}$.
This proves that $\fie_{\alpha}$ is composed with $\fie$ for every $\alpha \in U_{\rk}$ (cf. Corollary \ref{cor: rational}). In particular,  for $\alpha\in U_{\deg}$ the map $\fie_{\alpha}$ is birationally equivalent to $\fie$.
\smallskip

(b)   Let $\cP$ be the pull-back to $X\times U_{\rk}$ of the Poincar\'e line bundle on $A\times \Pic^0(A)$ and set $\cV:={\pr_2}_*(\cP\otimes \pr_1^*L)$, where $\pr_i$ is the $i$-th projection, $i=1,2$.
Let $\nu_T\colon  T\unu\to  T$ and $\nu_Z\colon  Z\unu\to Z$ be the normalization maps, and let $\fie\unu\colon T\unu\to Z\unu$ be the  morphism induced by $\fie$. Let $T\unu \to \bar T\overset{\bar \fie} \to Z\unu$ be the Stein factorization of $\phi\unu$: since $|L\otimes \alpha|$ is free for every $\alpha\in \Pic^0(A)$ the map $\fie_{\alpha}$ descends to  $\bar T$ for every $\alpha$ (cf. Lemma \ref{lem: factors}).  Now the  natural  morphism $X\times U_{\rk} \to \pp(\cV) $ induces    a morphism $T\unu\times U_{\rk}\to \pp(\cV) $ that, again by Lemma   \ref{lem: factors}, descends to a morphism
 $\Phi\colon \bar T \times U_{\rk} \to \pp(\cV) $;   we denote by  $\cZ$ the image of $\Phi$. Consider the map  $F\colon \bar T \times U_{\rk}\to Z\unu\times U_{\rk}$ defined by $F(x, \alpha)=(\bar \fie(x),\alpha)$. We have shown in the proof of  (a) that for $x\in\bar  T$ general  the morphism    $\Phi$ contracts $F^{-1}(F(x,\alpha))$  to a point for all  $\alpha \in U_{\rk}$. Since $F$  is a finite morphism and $Z\unu\times U_{\rk}$ is a normal variety,    Lemma \ref{lem: fGalois} and Lemma \ref{lem: factors} imply  that  $\Phi$ descends to a   morphism  $j\colon Z\unu\times U_{\rk}\to \cZ$. Comparing degrees we see that $j$ is birational.

Let $N\in {\rm Pic}(Z)$ be  such that $L=\fie^*N$ and let $\epsilon \colon \widehat Z \to  Z\unu$ be a desingularization.  Pulling back  ${j}^*\cO_{\pp(\cV)}(1)\otimes \pr_1^*({\nu_Z}^* N\inv )$ to $\widehat Z\times U_{\rk}$ we obtain a line bundle that defines  a morphism  $U_{\rk}\to \Pic^0(\widehat Z)$, which in turn extends to a homomorphism $\psi\colon \Pic^0(A)\to \Pic^0(\widehat Z)$. We define the map $g\colon Z\to 	 A$ as the  composition  of  $(\epsilon \circ \nu_Z)\inv $ with the Albanese map $\widehat Z\to \Alb(\widehat Z)$ followed by ${}^t\psi\colon  \Alb(\widehat Z)\to A$.
We claim that $a_{|T}$ and $g\circ \fie$ differ by  a translation in $A$. Without loss of generality, we may replace $T$ by a smooth variety
$T'$ birational to it such that the induced map  $\fie' \colon T'\to \widehat Z$ is a morphism.

 By the universal property of the Albanese map, the  strongly  generating  map $a'\colon T' \to A$  induced by $a_{|T}$  is determined, up to translation in $A$, by  ${a'}^*\colon \Pic^0(A)\to  \Pic^0(T')$,  and it is clear from  the construction of $g$ that $(g\circ \fie')^*={a'}^*$.
\end{proof}
\begin{proof}[Proof of Thm. \ref{thm:factorization}]
As in \S \ref{ssec:  continuous-res}, up to blowing up $X$  we can consider the decomposition $L=P+D$ of $L$ as the sum of  its continuous moving and fixed parts. By  \S \ref{ssec: continuous-res}, taking $d\gg 0$   we have that for $\alpha$ general  $|L\ud\otimes \alpha|=|P\up{d}\otimes \alpha|+D\ud$ and  $|P\ud\otimes \alpha|$ is base point free for all $\alpha$. In addition, by Lemma \ref{lem:ev-degree} we may assume that $m_{L|T}(d)=m_{L|T}$.

 Up to twisting $L$ by a very general element of $\Pic^0(A)$, we may assume that $0\in U_{\deg}\ud$  for every integer $d\ge 1$. By Proposition \ref{prop:factorization},  we may choose  $d_0\gg 0$  such that:
 \begin{itemize}
 \item[--]  $|P\up{d_0}|_{|T}$ induces a  surjective morphism  $\fie\upq{d_0}\colon T\up{d_0}\to Z\upq{d_0}$ of  degree $m_{L|T}$,  birationally equivalent to the morphism  given by $|P\up{d_0}\otimes {\alpha}|_{|T}$ for every $\alpha\in U_{\deg}\up{d_0}$;
 \item[--] ${a_{d_0}}_{|T\up{d_0}}=g\circ \fie\upq{d_0}$ for some map $g\colon Z\upq{d_0}\to A$.
 \end{itemize}

Let $G$ be the Galois group of $\mu_{d_0}$ and $\wt{\mu_{d_0}}$; the natural linearization of  $P\up{d_0}={\wt \mu_{d_0}}^*(P)$  descends to a $G$-action  on  the image $Z\upq{d_0}$ of $\fie\upq{d_0}$ and therefore $\fie\upq{d_0}$ induces a morphism $\fie\colon T\to Z:=Z\upq{d_0}/G$.
So we have a commutative diagram:
  \begin{equation}\label{diag:2}
  \begin{CD}
T\up{d_0}@>\wt{\mu_{d_0}}>>T\\
@V{\fie\upq{d_0}}VV@VV\fie V\\
Z\upq{d_0} @>>>Z\\
@VVV@V VV\\
A@>\mu_d>>A
  \end{CD}
  \end{equation}
By Lemma \ref{lem:galois} (applied to $P\up{d_0}$),  the group  $G$ acts freely on $Z\upq{d_0}$, and so  $\deg \fie=\deg\fie\upq{d_0}=m_{L|T}$.
  Using  the commutativity of  the above diagram, the fact that $H^0(X,L\otimes\alpha)$ is the $G$-invariant subspace of $H^0(X\up{d_0},L\up{d_0}\otimes  \alpha^{d_0})$ and the fact that for general $\beta\in \Pic^0(A)$   the system $|L\up{d_0} \otimes \beta|_{|T\up{d_0}}$ is composed with $\fie\upq{d_0}$, it is easy to check that  $|L\otimes \alpha|_{|T}$ is composed with $\fie$ for general $\alpha \in \pic^0(A)$.  Therefore, by continuity, $|L\otimes \alpha|_{|T}$ is composed with $\fie$  for $\alpha\in U_{\rk}$.

   So far we have proven the existence of $\fie\colon T\to Z$. Next we observe:
      \begin{enumerate}
   \item[(1)] if $\fie\up{d_0}\colon T\up{d_0}\to Z\up{d_0}$ is  the morphism obtained from $\fie \colon T\to Z$ by taking base change with $\mu_{d_0}$, then by the commutativity of the lower square in diagram \eqref{diag:2}, there is an induced  $G$-equivariant map   $Z\upq{d_0} \to Z\up{d_0}$.  Since the maps $Z\upq{d_0} \to Z$ and $Z\up{d_0}\to Z$ are both   \'etale Galois covers  with group $G$, it follows that $Z\upq{d_0} \to Z\up{d_0}$ is an  isomorphism;
   \item[(2)] if $d'=kd_0$ is an integer divisible by $d_0$ and we denote by $\fie\upq{d'}\colon T\up{d'}\to Z\upq{d'}$ the map given by $|L^{(d')}|$ then,  arguing as in (1) and using the isomorphism $Z\upq{d_0} \to Z\up{d_0}$, we see that there is an isomorphism   $Z\upq{d'}\to Z\up{d'}$. %By Lemma  \ref{lem:galois}, $Z\upq{d'}\to Z\up{d}$ is a Galois map with group isomorphic to the Galois group of $\mu_k\colon A\to A$. Hence we conclude that $Z\upq{d'}\to Z\up{d'}$ is a birational isomorphism and that $Z\upq{d'}$ is obtained from $\fie\colon X\to Z$ by base change, up to birational isomorphism.
   \end{enumerate}
By the choice of $L$ and by Proposition \ref{prop:factorization},   the above remarks suffice to prove  that $Z$ satisfies property (b) of the statement for $d\gg 0$. The uniqueness of $\fie\colon T\to Z$ up to birational equivalence follows in a similar way: if $\psi \colon T\to W$ is another map of degree $m_{L|T}$ satisfying properties (a) and (b) of the statement, then for $d$ sufficiently large and divisible there is a birational isomorphism $W\up{d}\to Z\up{d}$ which is compatible with the action of the Galois group of $\wt{\mu_d}$, and therefore descends to a birational  isomorphism $W\to Z$.

Finally, take any integer $k$ and set  $d=kd_0$.
%Let $\fie\upq{d}\colon X\up{d}\to Z\upq{d}$ be the map given by $|L\ud|$,
Denote by $G$ the Galois group of $\mu_d$ and by $H$ the Galois group of $\mu_{d_0}$ and denote by $\psi\colon T\up{k}\to Y:=Z\upq{d}/H$  the map induced by $\fie\upq{d}$. Then arguing as above one  shows that the map given by $|L^{(k)}\otimes \alpha|$ is composed with $\psi$ for $\alpha\in\Pic^0(A)$ general.

These remarks complete the proof.
\end{proof}

\section{The continuous rank function}\label{sec: rank}

We use freely the notation introduced in \S \ref{sec: preliminaries}.
Throughout all the section we fix:
\begin{itemize}
\item   a normal  variety $X$ of dimension $n$  and a line bundle  $L$ on $X$;
\item  a morphism $a\colon X\to A$ to an abelian variety  of dimension $q>0$;
\item an ample divisor $H$ on $A$; as usual, we write  $M:=a^*H$, and $M_d=a_d^*H$;
\item a  subvariety $T\subseteq X$ %not contained in $\sing X$ and
 such that $a_{|T}$ is strongly generating (the case $T=X$ is our main interest).
\end{itemize}
Given a line bundle $L$ on $X$    we define $L_x$ as the $\R$-divisor $L+xM$ on $X$.

\subsection{Continuous continuous rank}
%First of all we introduce the class of subvarieties of $X$ we are going to work with:

We are going to  extend the definition of the restricted continuous rank $h^0_a(X_{|T},L_x)$  for $x\in \Z$ to a function $\phi_T(x)$ for $x\in \R$  which is continuous and    non decreasing.  In a sense, we construct a {\it continuous} continuous rank. When $X=T$ this function is convex and we are able to compute  its left derivative explicitly. \smallskip

\begin{defn}[Extension to $\Q$]\label{defn: Q-ext} Let $T\subseteq  X$  be subvariety such that $a_{|T}$ is strongly generating;  given $x\in\Q$,
choose $d\in \N_{>0}$ such that  $x=\frac{e}{d^2}       $ with $e\in\Z$. We define:
\begin{equation} \label{Q-extension}
h^0_a(X_{|T},L_x):=\frac{1}{d^{2q}}h^0_{a_d}({X\up{d}}_{|T\ud},L^{(d)}+eM_d).
\end{equation}
Note that the definition does not depend on the choice of $d$ by Proposition \ref{prop: mult-rank}.
\end{defn}

\begin{comment}
     \begin{nota}\label{nota: derivata-T} \rp{forse questa definizione non serve piu'}
      Let  $T\subseteq  X$  be an $a$-general subvariety of dimension $m>0$.
Given a  base point free linear system $|Q|$ on $X\ud$ such that $|Q|_{|T\ud}$ is generically finite, we denote by  $QT\ud$  the    intersection   of $T\ud$ with a very general  element of $|Q|$.

 We can assume  in particular that the following  condition is satisfied:\rp{verificare se  usiamo ancora  questa condizione}
\begin{itemize}
\item[(*)] For any  $x\in \Q$ such that $h^0_a(X_{|T}, L_x)>0$  and for any $d\ge 1$, one has  $$h^0_{a_{d}}({X^{(d)}}_{|QT\ud}, (L_x)\up{d})>0.$$
\end{itemize}
When,  as is the case in  our applications,  $|Q|$ is a subsystem of $a_d^*|kH\otimes\alpha|$  for some $\alpha\in \Pic^0(A)$ we denote  $QT\ud$ by $(kM_d)T^{(d)}$.
 \end{nota}

  \begin{rem}\label{rem: a-generic-section}\rp{togliere se si toglie la Nota \ref{nota: derivata-T}}
Note that if   $T$ is   $a$-general, then  the  subvarieties $QT^{(d)}$ of $X\up{d}$ are also $a_d$-general  (cf. \cite[Thm.~1.1]{FL}). Observe also that $h^0_{a_{d}}({X^{(d)}}_{|QT\ud}, (L_x)\up{d})$ is independent of the very general element in $|Q|$. In addition, if $|Q|\subseteq a_d^*|k_jH\otimes\alpha|$, then $h^0_{a_{d}}({X^{(d)}}_{|QT\ud}, (L_x)\up{d})$ is constant for very general $\alpha\in \pic^0(A)$.
\end{rem}
\end{comment}

The main result of this section is the following:
\begin{thm}\label{thm: main-rank}
Let $X$ be a normal  variety of dimension $n$, $a\colon X \to A$ a   morphism  to an abelian variety of dimension $q>0$ and let   $L$ be a line bundle on $X$.  Then:
\begin{enumerate}
\item   if   $T\subseteq  X$ is  subvariety such that $a_{|T}$ is strongly generating,  then the function $h^0_a(X_{|T},L_x)$, $x\in\Q$,  extends to a continuous  non decreasing  function  $\phi_T\colon\R\to \R$ which has one-sided derivatives at every point $x\in\R$ and is differentiable except at most at countably  many points;
\item the function  $\phi:=\phi_X$  is convex and:
\begin{equation}\label{eq: derivative}
D^-\phi(x)=\lim_{d\rightarrow \infty}\frac{1}{d^{2q-2}}h^0_{a_d}(X\up{d}_{|M_d},(L_x)^{(d)}), \quad \forall x\in\R.
\end{equation}
\end{enumerate}
\end{thm}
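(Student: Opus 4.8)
The plan is to establish the two claims of Theorem~\ref{thm: main-rank} by first showing that the $\Q$-defined function is convex (in the case $X=T$) and Lipschitz-controlled, so that it extends continuously to $\R$, and then to identify the one-sided derivatives via an asymptotic limit. First I would record the basic monotonicity: since $M=a^*H$ is a pullback of an ample divisor, for $x\le x'$ in $\Q$ one has $L_x\le L_{x'}$ (the difference $(x'-x)M$ is pseudoeffective, indeed semiample), and so $h^0_a(X_{|T}, L_x)\le h^0_a(X_{|T}, L_{x'})$; this gives that $h^0_a(X_{|T}, L_x)$ is non-decreasing on $\Q$. The key structural input for convexity when $T=X$ should come from the super-additivity of sections under adding a semiample (globally generated after base change) class, analogous to the log-concavity/convexity properties of the volume function; concretely I expect to use the multiplicativity of Proposition~\ref{prop: mult-rank} together with a Hodge-index or multiplication-of-sections argument on the covers $X\up{d}$, where $|P\up{d}\otimes\alpha|$ becomes base-point-free (as in \S\ref{ssec: continuous-res}).

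Second, I would prove the continuity and the existence of one-sided derivatives. The cleanest route is to bound the ``jump'' $h^0_a(X_{|T}, L_{x'})-h^0_a(X_{|T}, L_x)$ in terms of a restricted continuous rank on a divisor in $|M_d|$: using Remark~\ref{rem: difference} to reduce to the divisorial case and the exact-sequence estimate
\[
h^0_a(X_{|T}, L_{x'})-h^0_a(X_{|T}, L_x)\le (x'-x)\cdot d^{2}\cdot h^0_{a_d}(X\up{d}_{|M_d},\,\cdot\,),
\]
one gets a uniform Lipschitz bound on any bounded interval, which forces the unique continuous extension $\phi_T$ to $\R$ and makes it locally Lipschitz, hence differentiable almost everywhere. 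For the one-sided derivatives I would exploit convexity (when $X=T$): a convex function on $\R$ automatically has left and right derivatives at every point, with $D^-\phi\le D^+\phi$, and is differentiable outside the at-most-countable set where these differ. For the merely restricted case (general $T$), where convexity need not hold, I would instead argue that $\phi_T$ is a difference of two convex functions $\phi_X$ by Remark~\ref{rem: difference} (writing $h^0_a(X_{|T},L_x)=h^0_a(X,L_x)-h^0_a(X,L_x-T)$ after blowing up so that $T$ is Cartier), and differences of convex functions share the property of having one-sided derivatives everywhere and being differentiable off a countable set.

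Third, for the explicit formula \eqref{eq: derivative} I would compute the left derivative of $\phi$ directly from the definition. Fix $x\in\Q$, write $x=e/d^2$, and consider the increment from $L_{x-1/d^2}$ to $L_x$ on the cover: by Definition~\ref{defn: Q-ext} and Proposition~\ref{prop: mult-rank},
\[
\phi(x)-\phi(x-\tfrac{1}{d^2}) = \frac{1}{d^{2q}}\Bigl(h^0_{a_d}(X\up{d}, L\up{d}+eM_d)-h^0_{a_d}(X\up{d}, L\up{d}+(e-1)M_d)\Bigr),
\]
and the right-hand difference is exactly the restricted continuous rank $h^0_{a_d}(X\up{d}_{|M_d}, (L_x)\up{d})$ (again via the divisorial exact sequence, identifying $M_d$ with a general member of $|a_d^*H|$). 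Dividing by the step length $1/d^{2}$ turns the prefactor $d^{-2q}$ into $d^{-(2q-2)}$, giving the expression inside the limit in \eqref{eq: derivative}; letting $d\to\infty$ (through multiples) and matching the difference quotient to the left derivative of the convex function $\phi$ yields the claimed formula, where monotonicity of the difference quotients (from convexity) guarantees the limit exists and equals $D^-\phi(x)$. The main obstacle I expect is precisely the convexity in part~(ii): proving that $x\mapsto h^0_a(X,L_x)$ is convex is the crux, since it is what simultaneously delivers the one-sided derivatives and legitimizes passing from finite differences to $D^-\phi$; I anticipate this requires a genuine multiplication-of-sections/Hodge-theoretic argument on the étale covers rather than a formal manipulation, and care is needed because the continuous rank is a generic (not a minimal, in the restricted case) value of $h^0$.
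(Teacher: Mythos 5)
Your architecture matches the paper's exactly: monotonicity, convexity of $\phi_X$ as the key input, reduction of the general-$T$ case to a difference of convex functions via Remark \ref{rem: difference}, the increment identity over steps of size $\frac{1}{d^2}$ (this is precisely the paper's Lemma \ref{lem: Delta}), and identification of $D^-\phi$ with the limit of those increments using the monotone convergence of left difference quotients of a convex function. But there is a genuine gap: you never prove convexity; you only flag it as ``the main obstacle'' and guess that it requires ``a Hodge-index or multiplication-of-sections argument.'' In the paper this step is the actual content of the proof (Lemmas \ref{lem: midpoint} and \ref{lem: convex}), and it is far more elementary than what you anticipate. After using the covering trick to reduce to the case where $x_1$, $x_2$ and $t=\frac{x_2-x_1}{2}$ are integers, one sets $L'=L_{x_2}\otimes\alpha$ with $\alpha\in\Pic^0(A)$ general, picks two general members $R_1,R_2\in|tM|$, and uses the linear-algebra identity $\dim(V_1+V_2)=\dim V_1+\dim V_2-\dim(V_1\cap V_2)$ for the subspaces $V_i=H^0(X,L'-R_i)\subseteq H^0(X,L')$, together with $V_1\cap V_2=H^0(X,L'-R_1-R_2)$ (valid because $|M|$ has no fixed part and the $R_i$ are general). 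This yields the midpoint inequality $h^0_a(X,L_{x_1})+h^0_a(X,L_{x_2})\ge 2\,h^0_a\bigl(X,L_{\frac{x_1+x_2}{2}}\bigr)$, and an elementary calculus lemma (a monotone function on $\Q$ with the midpoint property extends to a continuous convex function on $\R$) finishes part (i). Without this, your proposal contains no proof of the statement you yourself identify as the crux, and the heavier machinery you point to is not what is needed.

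A secondary problem: your alternative route to continuity via a ``uniform Lipschitz bound on any bounded interval'' is circular. The bound you can actually extract from the increment identity is $\phi(x')-\phi(x)\le (x'-x)\cdot\frac{1}{d^{2q-2}}h^0_{a_d}(X\up{d}_{|M_d},(L_{x'})\up{d})$, where $d$ must be chosen so that $x$ and $x'$ lie on the grid $\frac{1}{d^2}\Z$; the constant thus depends on $d$, hence on the denominators of $x,x'$, and its boundedness as $d\to\infty$ is exactly the finiteness of the left difference quotients you are trying to establish --- which the paper deduces from convexity, not conversely. Once convexity is in place, the rest of your plan (one-sided derivatives and countability of the non-differentiability set, the difference-of-convex-functions argument for general $T$, and the derivative formula via the sequence $x-\frac{1}{d^2}$, extended from $\Q$ to all of $\R$ by continuity of both sides of the increment identity) is sound and coincides with the paper's own argument.
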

\begin{rem}
If   $a_{|T}$ is not strongly generating but it is not constant  and the  maps $a^*\colon \pic^0(A)\to \pic^0(X)$ and  $a_{|T}^*\colon \pic^0(A)\to \pic^0(T)$ have the same kernel, then  (cf. Remark \ref{rem: strongly}) there  is a morphism  $a'\colon X\to A'$, with  $A'$ a positive dimensional abelian variety,  such that $a'_{|T}$ is strongly generating and   $h^0_a(X_{|T},L)=h^0_{a'}(X_{|T}, L)$. So Theorem \ref{thm: main-rank} also holds in this more general situation.  \end{rem}

\begin{rem}
The extended continuous rank function is quite hard to compute in general, even in dimension 1 (see Question \ref{quest: curves}).
In Section \ref{sec: examples} we give some explicit examples, mainly related to abelian varieties (divisors and cyclic coverings) and  curves.

In these examples we can see that the regularity properties of the continuous rank functions can not be improved in general. In Example \ref{ex: jiang}, kindly pointed out to us by Zhi Jiang, the function is not of class ${\mathcal C}^1$, nor convex.

Another property one might expect is that the continuous rank functions are piecewise polynomial, as it happens in all the known examples (see Question \ref{q: polynomial}). Recently Jiang and Pareschi (\cite{JP}), among other interesting properties, proved a partial result in this direction: the continuous rank functions are locally left and right defined by polynomials around rational points.
\end{rem}
We now turn to the proof of Theorem \ref{thm: main-rank}. We start with a  simple calculus result.

\begin{lem}\label{lem: midpoint}
 Let $I \subseteq \R$ be an open interval and let  $f\colon I\cap \Q\to \R$ be a non-decreasing function satisfying the ``midpoint property'', namely such that for every $t,t'\in \Q$ one has
$$\frac{f(t)+f(t')}{2}\ge f\left(\frac{t+t'}{2}\right).$$
Then $f$ extends to a continuous non-decreasing convex function $g\colon I \to \R$.
\end{lem}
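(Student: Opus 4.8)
The plan is to bootstrap the single midpoint hypothesis into a genuine convexity statement on $I\cap\Q$, and only then to extend to $\R$, where continuity will follow almost for free from convexity combined with monotonicity. I would never try to extend $f$ first and regularize afterward; instead all the work is done on the rational domain.

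First I would iterate the midpoint inequality into the $2^n$-point Jensen inequality $f\bigl(2^{-n}\sum_{i=1}^{2^n}x_i\bigr)\le 2^{-n}\sum_{i=1}^{2^n}f(x_i)$ for $x_i\in I\cap\Q$, by induction on $n$: split the $2^n$ arguments into two halves, apply the inductive hypothesis to each half to replace it by its (rational, and by convexity of $I$ still in $I\cap\Q$) average, then apply the midpoint property to these two averages. Feeding in $k$ copies of $t$ and $2^n-k$ copies of $t'$ then yields dyadic convexity: $f(\lambda t+(1-\lambda)t')\le\lambda f(t)+(1-\lambda)f(t')$ for every dyadic $\lambda=k/2^n\in[0,1]$ and $t,t'\in I\cap\Q$. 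To upgrade to \emph{all} rational weights I would use monotonicity: for rationals $t<t'$ and rational $x\in(t,t')$, the points $x_\mu:=\mu t+(1-\mu)t'$ with $\mu$ dyadic are dense in $[t,t']$ and decreasing in $\mu$, so choosing dyadic $\mu\nearrow\mu_0:=(t'-x)/(t'-t)$ gives $x_\mu\searrow x$; since $f$ is non-decreasing, $f(x)\le f(x_\mu)\le\mu f(t)+(1-\mu)f(t')$, and letting $\mu\to\mu_0$ produces the convexity inequality with rational weights at $x$.

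The main obstacle is the extension and, above all, ruling out jumps. For an interior $x_0$ I would set $\ell(x_0):=\sup_{t<x_0,\,t\in\Q}f(t)$ and $r(x_0):=\inf_{u>x_0,\,u\in\Q}f(u)$; both are finite because monotonicity bounds $f$ on a rational subinterval around $x_0$, and clearly $\ell(x_0)\le r(x_0)$. The crux is the reverse inequality. Fixing a rational $u>x_0$ and taking rationals $s'\nearrow x_0$ and $x\searrow x_0$, rational convexity gives
$$f(x)\le\frac{u-x}{u-s'}\,f(s')+\frac{x-s'}{u-s'}\,f(u);$$
as $s'\to x_0^-$ and $x\to x_0^+$ the weight on $f(u)$ tends to $0$ and that on $f(s')$ tends to $1$, whence $r(x_0)\le\ell(x_0)$ and so $\ell(x_0)=r(x_0)$. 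This is exactly the step where convexity plus monotonicity forbid a discontinuity at an interior point, which is why the midpoint hypothesis alone is not enough and the bootstrapping above is essential.

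Finally I would define $g(x):=\ell(x)=r(x)$ for $x\in I$ (all points are interior since $I$ is open). Equality of the one-sided limits makes $g$ continuous; $g$ is non-decreasing as a limit of the non-decreasing $f$; and $g$ extends $f$ because at a rational $x_0$ one has $\ell(x_0)\le f(x_0)\le r(x_0)$. Convexity of $g$ then follows immediately: $g$ is continuous and satisfies the midpoint inequality (pass to the limit in the hypothesis along rationals approximating arbitrary reals), and a continuous midpoint-convex function is convex — equivalently, the rational convexity of $f=g|_{I\cap\Q}$ extends to all of $I$ by density and continuity of $g$. This completes the construction of the desired continuous, non-decreasing, convex extension.
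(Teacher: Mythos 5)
Your proof is correct, but it runs in the opposite order from the paper's — indeed, the paper does exactly what you said you would avoid: it extends first and regularizes afterward. The paper defines $g(x)=\sup\{f(t)\mid t\in\Q\cap I,\ t\le x\}$ immediately, observes that $g$ inherits the midpoint property by approximating real points with rational sequences, and then kills any jump with a single three-point contradiction: if $g(x)^-<g(x)^+$ with gap $\epsilon$, then for $n$ large one has
$$g\left(x-\tfrac{1}{n}\right)+g\left(x+\tfrac{3}{n}\right)< g(x)^-+g(x)^++\epsilon=2g(x)^+\le 2g\left(x+\tfrac{1}{n}\right),$$
contradicting the midpoint inequality at $x-\frac{1}{n}$ and $x+\frac{3}{n}$; convexity is then quoted from the standard fact that a continuous midpoint-convex function is convex. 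Your route — first bootstrap the midpoint inequality into full rational convexity on $I\cap\Q$ (dyadic Jensen, then arbitrary rational weights using monotonicity and density of dyadics), and only then extend by showing the left envelope $\ell(x_0)=\sup_{t<x_0}f(t)$ and right envelope $r(x_0)=\inf_{u>x_0}f(u)$ coincide — is longer but arguably more robust: the identity $\ell=r$ comes from an explicit convexity estimate rather than a contradiction, and you never need to verify the midpoint property of the extension at irrational points, a step the paper handles by a limiting argument that has a small unaddressed subtlety (when the midpoint of two irrationals is itself rational, the approximating rational midpoints stay strictly below it, so the sup defining $g$ there is not obviously attained in the limit). Both arguments end by invoking the same classical fact (or, in your alternative phrasing, density plus continuity); the paper's proof is the quicker one, yours the more self-contained one.
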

\begin{proof}
 For $x\in I$ we define $g(x)=\sup\{f(t)\ |\ t\in\Q\cap I,t\le x\}$. Clearly $g$ extends $f$ and is non decreasing. Observe that for every $x$ we have  $f(x)=\lim_{n\to+\infty}f(t_n)$ for some  sequence of rational numbers $t_n\to x$. It follows that also $g$ has the midpoint property, hence it is enough to prove that $g$ is continuous, since a continuous function with the midpoint property is convex.

Since $g$ is non-decreasing, for every $x\in I$ there exist $g(x)^-:=\lim_{t\to x^-}g(t)$ and $g(x)^+:=\lim_{t\to x^+}g(t)$ and $g$ is continuous at $x$ iff $g(x)^-=g(x)^+$. So assume by contradiction that $g(x)^-<g(x)^+$ and set $\epsilon=g(x)^+-g(x)^-$. For $n\in \N$ large enough we have $g(x+\frac{3}{n})<g(x)^++\epsilon$. Then we have $$g(x-\frac{1}{n})+g(x+\frac{3}{n})< g(x)^-+g(x)^++\epsilon = 2g(x)^+\le 2g(x+\frac{1}{n}),$$
contradicting the midpoint property.
\end{proof}
\begin{lem}
\label{lem: convex}
In the  assumptions of  Theorem \ref{thm: main-rank} the following hold:
\begin{enumerate}
\item The function $h^0_a(X,L_x)$, $x\in\Q$,  extends to a continuous non decreasing convex  function  $\phi\colon \R\to \R$.
 \item The function $h^0_a(X_{|T},L_x)$, $x\in\Q$,  extends to a continuous non decreasing function  $\phi_T\colon\R\to \R$, which is the difference of two continuous convex functions.
 \end{enumerate}
\end{lem}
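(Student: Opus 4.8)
The plan is to deduce both statements from two ingredients: \emph{monotonicity} of the (restricted) continuous rank under twisting by $M$, and a \emph{discrete convexity} statement, after which Lemma \ref{lem: midpoint} supplies the analytic conclusion. First I would isolate a multiplication lemma: if $E$ is an effective, globally generated divisor on a variety $W$, $S\subseteq W$ a subvariety, and $s\in H^0(W,E)$ a section not vanishing identically on $S$, then twisting up by $E$ does not decrease the restricted rank. Indeed, multiplication by $s$ gives a commutative square comparing the restriction maps $H^0(W,N\otimes\alpha)\to H^0(S,N_{|S}\otimes\alpha)$ and $H^0(W,(N+E)\otimes\alpha)\to H^0(S,(N+E)_{|S}\otimes\alpha)$; since $S$ is integral and $s_{|S}\ne 0$, the induced map on images is injective, so $h^0(S_{|W},(N+E)\otimes\alpha)\ge h^0(W_{|S},N\otimes\alpha)$ for every $\alpha$. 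Evaluating on the nonempty open set where both sides attain their generic values gives $h^0_a(W_{|S},N+E)\ge h^0_a(W_{|S},N)$. Since $M=a^*H$ with $H$ ample is globally generated, applying this on $X\ud$ with $E$ a suitable multiple of $M_d$ and a section not vanishing on $T\ud$ shows at once that $\phi$ and $\phi_T$ are non-decreasing on $\Q$.

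Next I would prove that for any variety in our setup the integer function $e\mapsto h^0_a(X,L+eM)$ is discretely convex, i.e.\ has non-negative second difference. Let $Y\in|M\otimes\gamma|$ be a general (hence Cartier) divisor for general $\gamma\in\pic^0(A)$. By Remark \ref{rem: difference} together with Remark \ref{rem: continuous rank algebraic}, $h^0_a(X_{|Y},L+eM)=h^0_a(X,L+eM)-h^0_a(X,L+(e-1)M)$, so the first difference of $e\mapsto h^0_a(X,L+eM)$ is exactly the restricted rank on $Y$. The second difference is then $h^0_a(X_{|Y},L+(e+1)M)-h^0_a(X_{|Y},L+eM)\ge 0$ by the monotonicity just proved, giving discrete convexity. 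Since $a_d$ is again strongly generating and $M_d=a_d^*H$, applying the same result to $(X\ud,a_d,L\ud,M_d)$ shows that $\psi_d(e):=h^0_{a_d}(X\ud,L\ud+eM_d)$ is discretely convex for every $d$.

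I would then upgrade discrete convexity to the midpoint property on $\Q$. By Definition \ref{defn: Q-ext} and Proposition \ref{prop: mult-rank} one has $\phi(e/d^2)=d^{-2q}\psi_d(e)$, consistently in $d$. Given $t=a/d^2$ and $t'=b/d^2$, set $D=2d$, so that $t=4a/D^2$, $t'=4b/D^2$ and $(t+t')/2=(2a+2b)/D^2$, with $2a+2b$ the integer midpoint of $4a$ and $4b$; convexity of $\psi_D$ yields $\psi_D(2a+2b)\le\tfrac12\bigl(\psi_D(4a)+\psi_D(4b)\bigr)$, i.e.\ the midpoint inequality for $\phi$. Thus $x\mapsto h^0_a(X,L_x)$ is non-decreasing and midpoint convex on $\Q$, and Lemma \ref{lem: midpoint} extends it to a continuous, non-decreasing, convex function $\phi\colon\R\to\R$, proving (i). For (ii), by Remark \ref{rem: difference} I may assume $T$ is Cartier, so that $h^0_a(X_{|T},L_x)=h^0_a(X,L_x)-h^0_a(X,L_x-T)$ for all $x\in\Q$; since $L_x-T=(L-T)_x$, both terms extend by part (i) to continuous convex functions and $\phi_T$ is their difference, while $\phi_T$ is non-decreasing by the monotonicity already established.

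The main obstacle is the discrete convexity step, which carries all the geometric content: its force comes from correctly identifying the first difference of the unrestricted rank with a restricted rank on a divisor $Y\in|M|$ via Remark \ref{rem: difference}, and only then invoking monotonicity. The delicate points to check carefully are that the chosen section $s$ is a non-zero-divisor on the integral $S$ (so that the multiplication map on images is genuinely injective), and that the loci where the two twists attain their generic values overlap (nonempty open intersection), so that the pointwise inequalities descend to the continuous ranks rather than merely to values at a fixed $\alpha$.
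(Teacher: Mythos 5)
Your argument is correct in substance and rests on the same skeleton as the paper's proof: reduce to integral twists via the covering trick and Definition \ref{defn: Q-ext}, produce convexity over $\Q$ by multiplying by general sections of (multiples of) $M$, extend to $\R$ by Lemma \ref{lem: midpoint}, and deduce (ii) from (i) via Remark \ref{rem: difference}. The difference is in how the convexity inequality is generated. The paper proves the midpoint property directly: with $L'=L_{x_2}\otimes\alpha$ and two general members $R_1,R_2\in|tM|$ it bounds $h^0(X,L')$ below by $\dim\left(H^0(X,L'-R_1)+H^0(X,L'-R_2)\right)$, using that $H^0(X,L'-R_1)\cap H^0(X,L'-R_2)=H^0(X,L'-R_1-R_2)$ because general members of $|tM|$ have no common components. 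You instead identify the first difference $e\mapsto h^0_a(X,L+eM)-h^0_a(X,L+(e-1)M)$ with the restricted rank $h^0_a(X_{|Y},L+eM)$ on a general $Y\in|M\otimes\gamma|$, and then get nonnegative second differences from monotonicity of that restricted rank. The injectivity of multiplication by $s$ on images of restriction maps is exactly the mechanism that makes the paper's sum-of-subspaces count work, so the two arguments are essentially equivalent; what your packaging buys is coherence with the derivative computation, since your first-difference identity is precisely the shape of the paper's Lemma \ref{lem: Delta}, so convexity and the formula for $D^-\phi$ flow from one structural statement rather than two separate computations.

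One point needs patching. Your monotonicity lemma is stated for a subvariety $S$, which by the paper's conventions means $S$ irreducible, and you use integrality of $S$ to conclude that multiplication by $s_{|S}$ is injective. But in the discrete-convexity step you apply it to a general member $Y\in|M\otimes\gamma|$, which need not be irreducible: already when $\dim X=1$ the divisor $Y$ is a finite scheme of length $\deg M\ge 2$, and in higher dimension irreducibility of general members of $|M\otimes\gamma|$ is not automatic either. The fix is standard but should be said: since $X$ is normal (hence $S_2$) and $Y$ is Cartier, $Y$ has no embedded points, so its associated points are the finitely many generic points of its components; because $|M|$ is base point free (it contains $a^*|H|$ with $H$ very ample), a general section of $M$ restricts to a non-zero-divisor on $Y$, and injectivity of multiplication on the images of the restriction maps goes through verbatim. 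With the lemma restated for an arbitrary effective Cartier divisor $Y$ and a section that is a non-zero-divisor on $Y$, your proof is complete.
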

\begin{proof}
First note that the  functions $h^0_a(X,L_x)$ and  $h^0_a(X_{|T},L_x)$  are  clearly non decreasing for $x\in\Q$.

(i)   By Lemma \ref{lem: midpoint} it is enough to show that $x\mapsto h^0_a(X,L_x)$ has the midpoint property for $x\in \Q$.
So let $x_1<x_2\in \Q$: using a suitable multiplication map we can reduce to the case where $x_1$,  $x_2$  and $t=\frac{x_2-x_1}{2}$ are integers.
Let $L'=L_{x_2}\otimes \alpha$ for $\alpha $ general in $\pic^0(A)$, and take $R_1,R_2\in|tM|$ general  members.
Obviously both $H^0(X,L'-R_1)$ and $H^0(X,L'-R_2)$ are subspaces of $H^0(X, L')$,   and   $H^0(X,L'-R_1)\cap H^0(X,L'-R_2)=H^0(X,L'-R_1-R_2)$ since $|M|$ has no fixed part.
Hence
\begin{gather*}
h^0_a(X,L_{x_2})=h^0(X,L')\geq \dim\left (H^0(X,L'-R_1)+H^0(X,L'-R_2)\right)=\\=h^0(X,L'-R_1)+h^0(X,L'-R_2)-h^0(X,L'-R_1-R_2)=\\
=2h^0_a\left(X,L_{\frac{x_1+x_2}{2}}\right)-h^0_a(X,L_{x_1}),
\end{gather*} which gives the desired inequality.
\smallskip

(ii) Follows from (i)  by  Remark \ref{rem: difference}.
\end{proof}
\begin{comment}
\begin{rem} \rp{togliere o modificare} Once the continuity of $\phi$ is proven, we need to compute some bounds on its variation in order to prove its regularity. For this we are going to obtain bounds depending on the volume. The volume function also has regularity properties  (see \ref{ssec: reg res vol}), but at this point we   only use that it only depends on the numerical class of $L$ and that it can also  be extended to a continuous function $\psi(x)=\vol_X(L_x)$ (cf. \cite{lazarsfeld-I}).
\end{rem}
\end{comment}

\begin{lem} \label{lem: Delta}
For all $d\in \N_{>0}$ and all $x\in \R$ one has:
$$
h^0_a(X, L_x)-h^0_a(X, L_{(x-\frac{1}{d^2})})=\frac{1}{d^{2q}}h^0_{a_d}({ X\up{d}}_{|M_d}, (L_x)\up{d}).
$$
\end{lem}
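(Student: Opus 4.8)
The plan is to reduce the assertion to three ingredients already available: the definition of the $\Q$-extension (Definition \ref{defn: Q-ext}), the multiplicativity of the continuous rank (Proposition \ref{prop: mult-rank}, built into that definition), and the elementary splitting of a restricted rank into a difference of two ordinary ranks (Remark \ref{rem: difference}); the passage from $\Q$ to $\R$ is then handled by continuity. First I would observe that both sides are \emph{continuous} functions of $x\in\R$. The left-hand side is $\phi(x)-\phi(x-\tfrac{1}{d^2})$, with $\phi$ the continuous extension of Lemma \ref{lem: convex}(i). Since $M\ud\equiv d^2M_d$, the right-hand side is $\tfrac{1}{d^{2q}}$ times the map $x\mapsto h^0_{a_d}(X\ud_{|M_d},L\ud+xd^2M_d)$, which is continuous by Lemma \ref{lem: convex}(ii) applied to $(X\ud,a_d,L\ud)$ with subvariety $T=M_d$ (a general member of $|M_d|=|a_d^*H|$, for which $a_d{}_{|M_d}$ is strongly generating by \cite{FL}). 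Hence it suffices to prove the identity for $x\in\Q$.

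The conceptual shape of the proof is the following. Writing $P:=(L_x)\ud$, the relation $M\ud\equiv d^2M_d$ gives $P\equiv L\ud+xd^2M_d$ and $(L_{x-1/d^2})\ud\equiv P-M_d$. When $x=\tfrac{e}{d^2}$ with $e\in\Z$ the divisor $P\equiv L\ud+eM_d$ is integral, and Definition \ref{defn: Q-ext} yields
\[
h^0_a(X,L_x)=\tfrac{1}{d^{2q}}h^0_{a_d}(X\ud,L\ud+eM_d),\qquad h^0_a(X,L_{x-1/d^2})=\tfrac{1}{d^{2q}}h^0_{a_d}(X\ud,L\ud+(e-1)M_d).
\]
Subtracting and invoking Remark \ref{rem: difference} on $X\ud$ with the Cartier divisor $M_d$ then proves the identity for every $x\in\tfrac{1}{d^2}\Z$.

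The \emph{main obstacle} is that $\tfrac{1}{d^2}\Z$ is discrete, so continuity does not yet conclude: I must establish the identity on all of $\Q$, which forces a common-denominator computation. Given $x\in\Q$, I would pick $k\in\N_{>0}$ with $e:=xd^2k^2\in\Z$ and set $d'=dk$, so that $x=\tfrac{e}{d'^2}$ and $x-\tfrac{1}{d^2}=\tfrac{e-k^2}{d'^2}$. Applying Definition \ref{defn: Q-ext} with denominator $d'$ rewrites the two left-hand terms as $d'^{-2q}h^0_{a_{d'}}(X\up{d'},L\up{d'}+eM_{d'})$ and $d'^{-2q}h^0_{a_{d'}}(X\up{d'},L\up{d'}+(e-k^2)M_{d'})$. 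For the right-hand term, since $xd^2=\tfrac{e}{k^2}$, I would apply Definition \ref{defn: Q-ext} on $X\ud$ with denominator $k$ (using $(X\ud)\up{k}=X\up{d'}$, $(L\ud)\up{k}=L\up{d'}$, $(a_d)_k=a_{d'}$) to the restricted rank at $T=M_d$, and then Remark \ref{rem: difference} on $X\up{d'}$ with the Cartier divisor $(M_d)\up{k}\equiv k^2M_{d'}$, obtaining
\[
h^0_{a_d}(X\ud_{|M_d},(L_x)\ud)=\tfrac{1}{k^{2q}}\bigl(h^0_{a_{d'}}(X\up{d'},L\up{d'}+eM_{d'})-h^0_{a_{d'}}(X\up{d'},L\up{d'}+(e-k^2)M_{d'})\bigr).
\]
Because $d'^{2q}=d^{2q}k^{2q}$, dividing this by $d^{2q}$ matches it exactly with the difference of the two left-hand terms, establishing the identity for all $x\in\Q$, whence for all $x\in\R$ by the continuity reduction.

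The delicate points I expect to have to check carefully are exactly inside this last step: keeping linear versus numerical equivalence straight, since the discrepancies $M\ud-d^2M_d$ and $(M_d)\up{k}-k^2M_{d'}$ lie in $a^*\Pic^0(A)$ and are therefore invisible to the continuous rank (which only depends on the class modulo $\Pic^0(A)$, cf.\ Remark \ref{rem: continuous rank algebraic}); and verifying that the restriction $a_{d'}{}_{|(M_d)\up{k}}$ remains strongly generating, so that Definition \ref{defn: Q-ext} and Remark \ref{rem: difference} are legitimately applicable on each cover. Once these are in place the computation is purely formal.
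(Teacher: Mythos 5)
Your proposal is correct and follows essentially the same route as the paper: reduce to $x\in\Q$ by continuity (Lemma \ref{lem: convex}), pass to the common cover $X\up{dk}$ (the paper writes $X\up{dt}$), and identify both sides there via the difference formula for the Cartier divisor $(M_d)\up{k}\equiv k^2M_{dk}$ together with multiplicativity. Your appeal to Remark \ref{rem: difference} plus Definition \ref{defn: Q-ext} is exactly the paper's restriction exact sequence \eqref{eq: h0a} combined with a twist by a very general $\alpha\in\Pic^0(A)$ and Proposition \ref{prop: mult-rank}, and the delicate points you flag (numerical versus linear equivalence, strong generation of the restriction to $M_d$) are precisely the ones the paper handles implicitly.
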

\begin{proof}
All the functions involved are continuous by Lemma \ref{lem: convex}, so it is enough to prove the statement for $x\in \Q$.

So, pick $x\in \Q$ and let $t\in \N$ be such that $(td)^2x=:e$ is an integer.  Since  $(M_d)\up{t}=t^2M_{dt}$, we
 have the following exact sequence on $X\up{dt}$:
\begin{gather}\label{eq: h0a}
0\to H^0(X\up{dt}, L\up{dt}+(e-t^2)M_{dt})\to H^0(X\up{dt}, L\up{dt}+eM_{dt})\to\\
\to  H^0({X\up{dt}}_{|(M_d)\up{t}}, L\up{dt}+eM_{dt})\to 0\nonumber
\end{gather}
Since, possibly after twisting by a very general element $\alpha\in \pic^0(A)$, we may assume that   the $h^0$'s in  \eqref{eq: h0a} are actually $h^0_a$'s,
the claim follows  by  the multiplicative property of the continuous rank functions (Proposition \ref{prop: mult-rank}).
\end{proof}

\begin{proof}[Proof of Theorem \ref{thm: main-rank}]
\noindent(i) Follows from Lemma \ref{lem: convex} and from   the properties of convex functions.

\noindent(ii) Since the left derivative $D^-\phi$ exists at every point by (i), we can compute its value at a point $x$ as the limit of the increment ratio over the   sequence $\{x-\frac{1}{d^2}\}_{d\in \N}$,  so  formula \eqref{eq: derivative} is a consequence of Lemma \ref{lem: Delta}.
\end{proof}

\subsection{Volume and  restricted volume}\label{ssec: reg res vol}

In this section we recall   some known results on the volume, interpreting them in our set-up.
\smallskip

 We keep the assumptions made at the beginning of \S \ref{sec: rank} and we assume in addition:
 \begin{itemize}
 \item $X$ is  smooth;
 \item  $X$ has maximal $a$-dimension; we  denote by $\Sigma\subset X$ the exceptional locus of $a$.%, namely  the union of all positive dimensional subvarieties of $X$ that are contracted to a point by $a$.
 \end{itemize}
 Under these assumptions we  give  the following:
 \begin{defn}\label{def: a-general}
Let $T\subseteq  X$ be an irreducible subvariety.   We say that $T$ is {\em $a$-general} if it is not contained in $\Sigma$ and the map   $a_{|T}$  is strongly generating. (Note that an $a$-general subvariety $T\subseteq X$ has automatically maximal $a_{|T}$-dimension).\end{defn}
\smallskip

Let $D$ be a big $\Q$-divisor on $X$: we denote
by ${\bf B}_+(D)$ the augmented base locus of $D$, as defined in \cite[10.3]{lazarsfeld-II}. Recall (ibid.)  that  ${\bf B}_+(D)$  depends only on the numerical class of $D$ and that for $0<\lambda\in \Q$ one has ${\bf B}_+(\lambda D)={\bf B}_+(D)$.

For  $L \in \Pic(X)$  integral, denote by ${\rm CB(L)}$ the support of the subscheme $S\subseteq X$,   where  $\mathcal I_SL$ is the image of the continuous evaluation map \eqref{eq: eval-map}; in other words,  ${\rm CB(L)}$ is the set of points of $X$ that belong to the base locus of $|L\otimes \alpha|$ for general $\alpha\in\Pic^0(A)$.  Then we have:
\begin{lem}\label{lem: B+} Let $L\in \pic(X)$ such that $h^0_a(X,L)>0$;  then ${\bf B}_+(L)$ is contained in $\Sigma\cup {\rm CB(L)}$.
\end{lem}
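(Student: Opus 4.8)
The plan is to reduce the statement, via the continuous resolution of \S\ref{ssec: continuous-res} and an \'etale base change, to computing the augmented base locus of a \emph{semiample} divisor, and then to identify that locus with the exceptional locus of $a$. Fix a point $x\notin\Sigma\cup{\rm CB}(L)$; we must show $x\notin{\bf B}_+(L)$ (note that $L$ is big by Proposition \ref{prop: big}, so ${\bf B}_+(L)$ is defined, and that ${\bf B}_+$ depends only on the numerical class). First I would invoke \S\ref{ssec: continuous-res}: choose the modification $\sigma\colon\wt X\to X$ so that it is an isomorphism over a neighbourhood of $x$ (possible since $x\notin{\rm CB}(L)$), write $\sigma^*L=P+D$ for the continuous moving and fixed parts, and take $d\gg0$ as in diagram \eqref{diag:below}, with $\lambda=\sigma\circ g_d$, so that $P\ud:=g_d^*P$ has $|P\ud|$ free. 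Put $W:=\lambda\inv(x)$. By construction $W$ lies in the locus where $\lambda$ is \'etale, it is disjoint from $\lambda\inv(\Sigma)$ because $x\notin\Sigma$, and it is disjoint from ${\rm Supp}(D\ud)=g_d\inv({\rm Supp} D)$ because ${\rm Supp}(D)\subseteq\sigma\inv({\rm CB}(L))\cup{\rm Exc}(\sigma)$ avoids $\sigma\inv(x)$.

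Next I would use two standard properties of ${\bf B}_+$. For the finite \'etale cover $g_d$ one has ${\bf B}_+(g_d^*(-))=g_d\inv({\bf B}_+(-))$, and for the birational morphism $\sigma$ the two augmented base loci agree over the isomorphism locus; hence it suffices to prove $W\cap{\bf B}_+(\lambda^*L)=\emptyset$, and this yields $x\notin{\bf B}_+(L)$. Moreover, since $D\ud\ge0$ is effective, ${\bf B}_+(P\ud+D\ud)\subseteq{\bf B}_+(P\ud)\cup{\rm Supp}(D\ud)$: an ample-plus-effective decomposition of $P\ud$ missing a point gives one of $P\ud+D\ud$ after adding $D\ud$ to the effective summand. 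As $W$ avoids ${\rm Supp}(D\ud)$, I am reduced to showing $W\cap{\bf B}_+(P\ud)=\emptyset$.

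Now $P\ud$ is free, hence semiample: it is the pullback $\fie^*A_Z$ of an ample divisor under the morphism $\fie\colon\wt X\ud\to Z$ defined by $|P\ud|$, where $Z$ is taken normal via the Stein factorization. For such a pullback ${\bf B}_+(P\ud)$ equals the exceptional locus ${\rm Exc}(\fie)$ of $\fie$ (its positive-dimensional fibres and contracted subvarieties); this is Nakamaye's theorem, and it is elementary here since $P\ud$ is semiample and $A_Z$ is ample. It then remains to prove ${\rm Exc}(\fie)\subseteq\lambda\inv(\Sigma)\cup{\rm Exc}(\lambda)$, the geometric heart of the argument: by the proof of Proposition \ref{prop: big}, applied on level $d$ (where $\wt X\ud$ has maximal dimension for the composite map $\wt X\ud\to A$), a general fibre of $\fie$ is contracted by this map to $A$; by the rigidity Lemma \ref{lem: factors} the map to $A$ descends through $\fie$ to a morphism $Z\to A$, so \emph{every} $\fie$-contracted subvariety is contracted to $A$, i.e.\ lies in the exceptional locus of $\wt X\ud\to A$, which is contained in $\lambda\inv(\Sigma)\cup{\rm Exc}(\lambda)$. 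Since $W$ avoids both $\lambda\inv(\Sigma)$ and ${\rm Exc}(\lambda)$, we conclude $W\cap{\bf B}_+(P\ud)=\emptyset$, as desired.

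The main obstacle is this last containment ${\rm Exc}(\fie)\subseteq\Sigma$ (after the base changes): one must upgrade the statement ``general fibres of the moving-part map are contracted by $a$'' (Proposition \ref{prop: big}) to ``all fibres are'', using normality of the target and the rigidity results of \S\ref{ssec:aux}. The remaining work is bookkeeping, namely keeping the point $x$ inside the isomorphism locus of $\sigma$ and the \'etale locus of $g_d$ throughout, so that the \'etale and birational comparisons of augmented base loci apply verbatim.
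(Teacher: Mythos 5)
Your overall strategy (reduce to the semiample continuous moving part via \S\ref{ssec: continuous-res} and the covering trick, identify ${\bf B}_+$ of the semiample divisor with the exceptional locus of the induced morphism, and transport augmented base loci along $\sigma$ and $g_d$) is genuinely different from the paper's argument, but the step you yourself call the geometric heart has a real gap. You want ${\rm Exc}(\fie)\subseteq\lambda\inv(\Sigma)\cup{\rm Exc}(\lambda)$, and you propose to get it by combining ``a general fibre of $\fie$ is contracted to $A$'' (from the proof of Proposition \ref{prop: big}) with the rigidity Lemma \ref{lem: factors}. This does not work: Lemma \ref{lem: factors} requires that \emph{every} fibre be contracted --- which is exactly what you are trying to prove --- while the statement available for general fibres, Corollary \ref{cor: rational}, produces only a rational map $Z\map A$, which gives no control on the special fibres of $\fie$; and since $Z$ is only normal, not smooth, you cannot invoke the classical extension theorem for rational maps to abelian varieties to promote it to a morphism. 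Worse, in your situation the hypothesis you feed into rigidity is vacuous: since $X$ has maximal $a$-dimension and $d\gg0$, the map defined by $|P\ud|$ is generically finite (Proposition \ref{prop: big}), so after Stein factorization $\fie$ is birational and its general fibres are single points. For a non-finite morphism, ``general fibres contracted'' never self-improves to ``all fibres contracted'': take $f$ the blow-down of a $(-1)$-curve and $g$ the identity. (The paper's general-to-all statement, Lemma \ref{lem: fGalois}, is only for \emph{finite} $f$.) The containment you need is nevertheless true, and the correct mechanism is at hand: the argument proving Proposition \ref{prop: big} is pointwise --- it uses only that $|P\ud\otimes\alpha|$ is free for \emph{every} $\alpha\in\pic^0(A)$, so that $\alpha$ restricted to any connected component $F$ of \emph{any} fibre of the morphism given by $|P\ud|$ is simultaneously globally generated and numerically trivial, hence trivial, whence $F$ is contracted to $A$. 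Applying that argument to every fibre, with no rigidity step, closes the gap.

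A secondary problem is that the two ``standard properties'' of ${\bf B}_+$ you invoke are not in the paper, and the directions you actually need are the nontrivial ones: for the \'etale cover you need $g_d\inv({\bf B}_+(N))\subseteq{\bf B}_+(g_d^*N)$ (proved by averaging an ample-plus-effective decomposition over the Galois group and descending), and for $\sigma$ you need $\sigma\inv({\bf B}_+(L))\subseteq{\bf B}_+(\sigma^*L)$ over the isomorphism locus, whose standard proof goes through the characterization ${\bf B}_+(L)=\bigcup\{V \mid \vol_{X|V}(L)=0\}$ of \cite[Thm.~C]{elmnp}. That characterization is precisely the engine of the paper's actual proof, which is one paragraph long and entirely different from yours: if $V\subseteq{\bf B}_+(L)$ is positive-dimensional with $\vol_{X|V}(L)=0$ and $V\not\subseteq\Sigma\cup{\rm CB}(L)$, then $h^0_a(X_{|V},L)>0$; by continuity of the rank function (Theorem \ref{thm: main-rank}) $h^0_a(X_{|V},L-\epsilon M)>0$, so after rescaling $L$ one may assume $h^0_a(X_{|V},L-M)>0$, whence $0=\vol_{X|V}(L)\ge\vol_{X|V}(M)>0$ because $V$ is generically finite over $A$ --- a contradiction. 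So even once the rigidity step is repaired, your route is considerably longer and its auxiliary inputs largely overlap with the tool the paper applies directly.
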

\begin{proof}
Note that  that $L$ is big  by Proposition \ref{prop: big}. By Theorem C of \cite{elmnp}, ${\bf B}_+(L)$ is the union of all the positive dimensional subvarieties $V\subseteq X$ such that $\vol_{X|V}(L)=0$.
Let $V$ be such a subvariety and assume by contradiction that $V$  is not contained in $\Sigma\cup {\rm CB}(L)$. Then $V\to A$ is generically finite onto its image  and $h^0_a(X_{|V},L)>0$. By continuity (cf. Theorem \ref{thm: main-rank}), we have $h^0_a(X_{|V},L-\epsilon M)>0$ for $0<\epsilon \ll 1$.
So, up to replacing  $L$ by a suitable multiple, we may assume  $h^0_a(X_{|V},L-M)>0$.
 Since ${\bf B}_+(L)$ depends only on the numerical class of $L$, up to twisting by    $\alpha\in \pic^0(A)$ we may assume that $h^0_a(X_{|V}, L-M)=h^0_a(X_{|V},L-M)>0$. But then $0=\vol_{X|V}(L)\ge \vol_{X|V}(M)>0$, a contradiction.
 \end{proof}

\begin{prop}\label{lem: derivata volume}Let $T\subseteq X$ be an $a$-general subvariety such that $h^0_a(X_{|T},L_x)\not\equiv 0$.
Set $x_0:=\max\{ x \mid \vol_X(L_x)=0\}$ and  $ \bar x:=\max\{ x \mid h^0_a(X_{|T}, L_x)=0\}$. Then:
\begin{enumerate}
\item   the function $\vol_X(L_x)$, $x\in\Q$,    extends to a continuous function $\psi\colon \R\to \R$, which is differentiable for $x\ne x_0$  and
$$\psi'(x)=\begin{cases}
0 & x<x_0 \\
n\vol_{X|M}(L_x) & x>x_0
\end{cases}
$$
\item the function $\vol_{X|T}(L_x)$, $x\in\Q$, extends to a continuous function  $\psi_T\colon  (\bar x, +\infty)\to\R$, depending only on the numerical class of $L$.
\end{enumerate}
\end{prop}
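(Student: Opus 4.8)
The plan is to deduce both statements from the known regularity properties of the volume and the restricted volume functions developed in \cite{elmnp} and \cite{lazarsfeld-II}, the only real work being to verify, by means of Lemma \ref{lem: B+}, that the relevant classes lie in the open cones where these functions are well behaved. Throughout I use that $M=a^*H$ is nef, so that $x\mapsto \vol_X(L_x)$ and $x\mapsto\vol_{X|T}(L_x)$ are non-decreasing.

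For part (i), I would first recall that the volume is a continuous function on $N^1(X)_{\R}$ \cite{lazarsfeld-I}; restricting it to the line $x\mapsto L+xM$ produces the continuous extension $\psi\colon\R\to\R$ of $\vol_X(L_x)$. By the definition of $x_0$ and monotonicity, $\psi(x)=0$ for all $x\le x_0$, so $\psi$ is identically zero, hence differentiable with $\psi'(x)=0$, on the open interval $\{x<x_0\}$. For $x>x_0$ the class $L_x$ is big, and this persists on a neighbourhood, so by the $C^1$-regularity of the volume on the big cone \cite{elmnp} the function $\psi$ is (two-sidedly) differentiable there. To identify the derivative I would take $M$ to be a general, hence irreducible and moving, member of $|M|$ (recall $H$ is very ample): by Lemma \ref{lem: B+} together with this genericity one gets $M\not\subseteq{\bf B}_+(L_x)$ for $x>x_0$, and the formula relating the derivative of the volume to the restricted volume along a prime divisor off the augmented base locus \cite{elmnp} then yields
$$\psi'(x)=\frac{d}{dt}\Big|_{t=0}\vol_X(L_x+tM)=n\,\vol_{X|M}(L_x),\qquad x>x_0.$$

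For part (ii), I would invoke that for a fixed subvariety $V$ the restricted volume $\vol_{X|V}$ depends only on the numerical class and extends to a continuous function on the open cone ${\rm Big}^V(X)$ of classes $\xi$ with $V\not\subseteq{\bf B}_+(\xi)$ \cite{elmnp}. It therefore suffices to check that $L_x\in{\rm Big}^T(X)$ for every $x>\bar x$. For such $x$ one has $h^0_a(X_{|T},L_x)>0$, so $L_x{}_{|T}$ is big by Proposition \ref{prop: big}; moreover $T$ is $a$-general, hence $T\not\subseteq\Sigma$, and since for general $\alpha$ some section of $L_x\otimes\alpha$ does not vanish identically on $T$ we also have $T\not\subseteq{\rm CB}(L_x)$. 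By Lemma \ref{lem: B+} (applied after clearing denominators, as ${\bf B}_+$ is numerical and homogeneous) this gives $T\not\subseteq{\bf B}_+(L_x)$, i.e.\ $L_x\in{\rm Big}^T(X)$; continuity on $(\bar x,+\infty)$ and invariance under numerical equivalence then follow directly.

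The main obstacle is the derivative computation in (i): it rests on the nontrivial differentiability theory of the volume function and on the identification of $n^{-1}\psi'$ with a restricted volume, for which the hypothesis $M\not\subseteq{\bf B}_+(L_x)$ is essential, and this is exactly where Lemma \ref{lem: B+} enters. One must also be careful at the single point $x_0$, where $\psi$ need not be differentiable (the left derivative is $0$ while the right derivative may be strictly positive), which is why the formula is asserted only for $x\ne x_0$. A secondary technical point is the precise meaning of $\vol_{X|M}(L_x)$, which should be read as the restricted volume along a general, hence irreducible, member of $|M|$.
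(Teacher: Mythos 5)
Your proposal is correct and follows essentially the same route as the paper's own proof: continuity of the volume on $N^1(X)$ plus monotonicity for the extension and the vanishing below $x_0$, differentiability of the volume on the big cone together with the derivative-equals-$n\cdot$restricted-volume formula for $x>x_0$, and Lemma \ref{lem: B+} combined with \cite[Thm.~A]{elmnp} for part (ii). Two minor points: the $C^1$-regularity and the derivative formula that you attribute to \cite{elmnp} are in fact the results of \cite{BFJ} (Thm.~A and Cor.~C; cf.\ also \cite[Cor.~C]{LM}) cited by the paper, and in (i) the genericity of $M\in|M|$ alone already gives $M\not\subseteq{\bf B}_+(L_x)$ — invoking Lemma \ref{lem: B+} there is superfluous and indeed not always legitimate, since $h^0_a(X,L_x)$ may vanish for $x$ slightly above $x_0$.
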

\begin{proof} (i) The existence of $\psi$ follows from  the continuity of the volume function on $N^1(X)$ \cite[Thm.~2.2.44]{lazarsfeld-I}.
 The function $\psi$ is nondecreasing, hence it is identically $0$  for $x<x_0$; differentiability for $x>x_0$   and the formula  for $\psi'$ are a consequence of Thm.~A and  Cor.~C of   \cite{BFJ} (cf. also \cite[Cor. C]{LM}).
\medskip

(ii) For $x\in (\bar x,+\infty)\cap \Q$ we have that $0<h^0_a(X_{|T}, L_x)=h^0_a(X, L_x)-h^0_a(X, {\mathcal I}_TL_x)$ and so we deduce that $T$ is not contained in $CB(L)$ and that $h^0_a(X, L_x)\neq 0$. Hence, by Lemma \ref{lem: B+} we deduce that $T$ is not contained in  ${\bf B}_+(D)$ and the claim follows from \cite[Thm.~A]{elmnp}.
\end{proof}

We will often use the following remark:
\begin{lem}\label{lem: mult-vol} In the hypotheses  and notation of Proposition \ref{lem: derivata volume}, one has:
$$
\vol_{X\up{d}|T\up{d}}((L_x)\up{d})=d^{2q}\vol_{X|T}(L_x), \quad \forall x >\bar x, \  \forall d\in \N_{>0}
$$
\end{lem}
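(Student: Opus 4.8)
The plan is to prove the equality first for rational $x$, by decomposing all the relevant spaces of sections into $G$-eigenspaces exactly as in Lemma \ref{lem:galois}, and then to extend it to every real $x>\bar x$ by continuity.

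So I would first fix $x\in\Q$ with $x>\bar x$ and recall that $\wt{\mu_d}\colon X\up{d}\to X$ is a connected étale Galois cover with group $G\cong\Z_d^{2q}$, whose restriction $\wt{\mu_d}\colon T\up{d}\to T$ is again an étale Galois cover with the same group, since $T\up{d}=\wt{\mu_d}\inv(T)$ is irreducible. For any $N\in\Pic(X)$ the decomposition $\wt{\mu_d}_*\OO_{X\up{d}}=\bigoplus_{\eta\in\Pic^0(A)[d]}\eta\inv$ yields the $G$-eigenspace decomposition
$$H^0(X\up{d},\wt{\mu_d}^*N)=\bigoplus_{\eta\in\Pic^0(A)[d]}H^0(X,N\otimes\eta),$$
already used in the proof of Lemma \ref{lem:galois}. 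The analogous decomposition holds on $T\up{d}$; moreover, by flat base change along the Cartesian square defining $T\up{d}$, the restriction map $H^0(X\up{d},\wt{\mu_d}^*N)\to H^0(T\up{d},(\wt{\mu_d}^*N)_{|T\up{d}})$ is $G$-equivariant, hence block-diagonal, its $\eta$-block being the restriction $H^0(X,N\otimes\eta)\to H^0(T,(N\otimes\eta)_{|T})$.

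Taking $N=kL_x$ for $k$ with $kx\in\Z$ and comparing the dimensions of the images gives
$$h^0\big({X\up{d}}_{|T\up{d}},\,k(L_x)\up{d}\big)=\sum_{\eta\in\Pic^0(A)[d]}h^0\big(X_{|T},\,kL_x\otimes\eta\big).$$
Now $|\Pic^0(A)[d]|=d^{2q}$, and for each $\eta$ the twist is numerically trivial, so by Proposition \ref{lem: derivata volume}(ii) the restricted volume is unchanged: $\vol_{X|T}(L_x\otimes\eta)=\vol_{X|T}(L_x)$. Dividing the displayed identity by $k^m/m!$ with $m=\dim T$ and letting $k\to\infty$, each of the finitely many summands converges to $\vol_{X|T}(L_x)$; hence the $\limsup$ defining $\vol_{X\up{d}|T\up{d}}((L_x)\up{d})$ is a genuine limit equal to $d^{2q}\vol_{X|T}(L_x)$. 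Finally, both sides are continuous functions of $x$ on $(\bar x,+\infty)$ by Proposition \ref{lem: derivata volume}(ii) (applied to $X\up{d},T\up{d}$ and to $X,T$, the threshold being the same by Proposition \ref{prop: mult-rank}), so the equality extends from $\Q$ to all real $x>\bar x$.

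The computation is otherwise routine; the one point requiring care is the compatibility of the eigenspace decomposition with restriction to $T\up{d}$, which rests on the $G$-equivariance of the restriction map together with the base-change identity $(\wt{\mu_d}_*\OO_{X\up{d}})_{|T}=(\wt{\mu_d}_{|T\up{d}})_*\OO_{T\up{d}}$. Granting this, no positivity input beyond $x>\bar x$ is needed, since the right-hand side of the displayed identity already converges and thereby forces the left-hand $\limsup$ to be an honest limit.
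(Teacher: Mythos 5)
Your eigenspace decomposition and the resulting identity
$$
h^0\big({X\up{d}}_{|T\up{d}},\,k(L_x)\up{d}\big)=\sum_{\eta\in\Pic^0(A)[d]}h^0\big(X_{|T},\,kL_x\otimes\eta\big)
$$
are correct, but the evaluation of the limit contains a genuine gap. You claim that each summand $\frac{m!}{k^m}\,h^0(X_{|T},kL_x\otimes\eta)$ converges to $\vol_{X|T}(L_x\otimes\eta)=\vol_{X|T}(L_x)$, citing numerical invariance of the restricted volume. However, $\vol_{X|T}(L_x\otimes\eta)$ is by definition the asymptotic of $h^0\big(X_{|T},k(L_x\otimes\eta)\big)=h^0\big(X_{|T},kL_x\otimes\eta^{k}\big)$, where the twist \emph{varies} with $k$, whereas your summand carries the \emph{fixed} twist $\eta$. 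Numerical invariance (\cite[Thm.~A]{elmnp}, i.e.\ Proposition \ref{lem: derivata volume}(ii)) says nothing about fixed-twist asymptotics, and identifying the two quantities is precisely the delicate point: since the limsup of your sum \emph{is}, by definition, the left-hand restricted volume, this identification is the entire content of the lemma. To close the gap you would need either a sandwich argument via multiplication by sections of $k_0L_x\otimes\eta^{\pm1}$ whose restrictions to $T$ are nonzero (whose existence you have not established), or a subsequence trick: after clearing the denominator of $x$ by homogeneity, restrict to $k\equiv 1\pmod{d}$, so that $\eta^{k}=\eta$ and the summand becomes $h^0(X_{|T},k(L_x\otimes\eta))$ — but this in turn requires the nontrivial fact that restricted volumes are honest limits, not just limsups, when $T\not\subseteq{\bf B}_+$ (cf.\ \cite{LM}), so that computing along an arithmetic progression is legitimate. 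None of this appears in your write-up.

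This is also exactly the point where the paper's proof takes a different and safer route: it never evaluates $h^0$ at special (torsion) twists. Instead, given $N$ with $h^0_a(X_{|T},N)>0$, it uses Lemma \ref{lem: B+} and \cite[Thm.~A]{elmnp} to see that $\vol_{X|T}(N)$ is a numerical invariant, then twists $N$ by a \emph{very general} $\alpha\in\pic^0(A)$ so that $h^0(X_{|T},kN)=h^0_a(X_{|T},kN)$ simultaneously for all $k$; this expresses $\vol_{X|T}(N)$ as an asymptotic of continuous ranks, which are invariant under \emph{every} $\Pic^0(A)$-twist by construction, so no fixed-twist problem can arise. The factor $d^{2q}$ then drops out of the already-proven multiplicativity of the continuous rank (Proposition \ref{prop: mult-rank}), and homogeneity plus continuity handle rational and real $x$, as in your last step. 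If you want to keep your direct approach, you must supply the missing comparison between fixed-twist and varying-twist asymptotics; as written, the proof does not go through.
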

\begin{proof}  Let $N$ be a line bundle on $X$ such that $h^0_a(X_{|T}, N)>0$.
By Lemma  \ref{lem: B+}  we have $T\not \subseteq {\bf B}_+(N)$ and $\vol_{X|T}(N)$ depends only on the numerical equivalence class of $N$ by \cite[Thm.~A]{elmnp}. Up to twisting by a very general $\alpha\in \pic^0(X)$ we may assume  $h^0(X_{|T},kN)=h^0_a(X_{|T},kN)$ for all $k\in\N$, and therefore $\vol_{X|T}(N)=m!\lim_{k\to\infty}\frac{h^0_a(X_{|T}, kN)}{k^m}$, where $m:=\dim T$.  It follows that
\begin{equation}\label{eq: integral}
\vol_{X\up{d}|T\up{d}}(N)=d^{2q}\vol_{X|T}(N)
\end{equation}  by the multiplicativity of the continuous rank (Proposition  \ref{prop: mult-rank}).

Fix now a rational number $x>\bar x$  and pick $t\in \N_{>0}$ such that $tx$ is an integer. Then \eqref{eq: integral} gives
$$d^{2q}\vol_{X|T}(L_x)=\frac{d^{2q}}{t^m}\vol_{X|T}(t(L_x))=\frac{1}{t^m}\vol_{X\up{d}|T\up{d}}(t(L_x)\up{d})=\vol_{X\up{d}|T\up{d}}((L_x)\up{d}).$$
The claim now extends to all $x\in (\bar x,+\infty)$ by continuity.
\end{proof}

\subsection{Extension of the eventual degree}
\label{ssec: R-eventual degree}
We keep the assumptions made at the beginning of \S \ref{sec: rank} .

 Let $\bar x:=\max\{x\in \R\ |\ h^0_a(X_{|T}, L+xM)=0\}$.
We can extend the definition of eventual degree (cf. \S \ref{ssec: ev-degree}) to line bundles of the form  $L_x$, for $x\in\mathbb{Q}\cap (\bar x,+\infty)$, as we have done for the continuous rank.

Take $d\in \N$ such that $d^2x$ is an integer and define   $m_{L_x|T}:=  m_{L_x\up{d}|T\ud}$.
It is immediate to see that $m_{L_x|T}$ does not depend  on the choice of $d$;
we extend the definition to $x\in \R$ by  setting $m_{L_x|T}=\inf\{m_{L_t|T} \ |\ t\in\Q, \ t\le x\}$.
 The function  $x\mapsto m_{L_x|T}$ is non increasing and  takes only finitely many values (by Theorem \ref{thm:factorization} the
  possible values are the positive  divisors of $\deg a_{|T}$).

Note that even for  $x\in \Q$  there is in general no eventual map associated to $L_x$. Indeed, if  $x=\frac{e}{d^2}$, then the map $a_{d|T^{(d)}}$ has a factorization by a map of degree $m_{L_x|T}$, but this factorization needs  not  descend to $T$.
However, it is possible to say something more in the case when $m_{L_x|T}=2$.
\begin{prop}\label{prop: noncomposto}
Let $\wt T\to T$ be a desingularization and  let $\bar x$ be defined as above.
If   $\wt T$ is of general type and there exist $ x_1<x_2\in (\bar x,+\infty)$ such that $m_{L_{x_1}|T}=m_{L_{x_2}|T}=2$, then $a_{|T}$ is composed with an involution.
\end{prop}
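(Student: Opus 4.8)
The goal is to produce a non-trivial birational involution $\sigma$ of $T$ (equivalently of its desingularization $\wt T$) satisfying $a_{|T}\circ\sigma=a_{|T}$, that is, a non-trivial involution in $\Gal(a_{|T})$. Since the continuous rank and all the constructions of \S\ref{sec: eventual} are birational in nature, I would work throughout on $\wt T$ and on its \'etale covers $\wt T^{(d)}$, which desingularize the $T^{(d)}$ (so that $\Gal(a_{d|T^{(d)}})=\Gal(\wt a_d)$ and $\Gal(a_{|T})=\Gal(\wt a)$, where $\wt a\colon \wt T\to A$ and $\wt a_d\colon \wt T^{(d)}\to A$ are the induced maps). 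The plan is to manufacture the desired involution on a well-chosen cover $T^{(d)}$ out of the degree-$2$ eventual map furnished by Theorem \ref{thm:factorization}, and then to push it down to $T$ by means of Lemma \ref{lem: Gal-a}.

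First I would pin down a good value of $x$. Since $x\mapsto m_{L_x|T}$ is non-increasing (cf. \S\ref{ssec: R-eventual degree}) and takes the value $2$ at both $x_1$ and $x_2$, it is identically $2$ on $[x_1,x_2]$. Let $C$ be the constant provided by Lemma \ref{lem: Gal-a} applied to $\wt a\colon \wt T\to A$: this is legitimate because $\wt T$ is smooth of general type by hypothesis, while $\wt a$ is strongly generating and $\wt T$ has maximal $\wt a$-dimension, both inherited from $a_{|T}$ (recall that $m_{L_x|T}<+\infty$ forces $T$ to have maximal $a_{|T}$-dimension). I would then fix a prime $p>C$; as the rationals of the form $e/p^{2k}$ with $e\in\Z$, $k\ge 1$, are dense in $\R$, I can choose one such $x=e/p^{2k}\in(x_1,x_2)$, so that $m_{L_x|T}=2$ while the relevant cover is the prime-power cover of order $d:=p^k$.

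Next, I would apply Theorem \ref{thm:factorization} to the data $\bigl(X^{(d)},T^{(d)},(L_x)^{(d)}=L^{(d)}+eM_d,a_d\bigr)$. Here $h^0_{a_d}\bigl({X^{(d)}}_{|T^{(d)}},(L_x)^{(d)}\bigr)=d^{2q}h^0_a(X_{|T},L_x)>0$ by Proposition \ref{prop: mult-rank} and the fact that $x>\bar x$, and $m_{(L_x)^{(d)}|T^{(d)}}=m_{L_x|T}=2<+\infty$. The theorem then yields an eventual map $\fie\colon T^{(d)}\to Z$ of degree $2$ with $a_{d|T^{(d)}}$ composed with $\fie$. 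Crucially, $\fie$ is a genuine map out of $T^{(d)}$ itself, with no further base change required; and a generically finite dominant map of degree $2$ is automatically Galois, so $\Gal(\fie)$ is generated by a non-trivial involution $\iota$ with $\fie\circ\iota=\fie$. Writing $a_{d|T^{(d)}}=g\circ\fie$ gives $a_{d|T^{(d)}}\circ\iota=a_{d|T^{(d)}}$, that is, $\iota$ is a non-trivial element of $\Gal(a_{d|T^{(d)}})=\Gal(\wt a_d)$.

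Finally I would descend. Since $d=p^k$ with $p>C$, Lemma \ref{lem: Gal-a} gives $\Gal(\wt a_d)=\Gal(\wt a)=\Gal(a_{|T})$, and under this group isomorphism $\iota$ corresponds to a non-trivial order-$2$ element $\sigma\in\Gal(a_{|T})$; thus $\sigma$ is an involution with $a_{|T}\circ\sigma=a_{|T}$, which is precisely the assertion that $a_{|T}$ is composed with an involution. I expect the main obstacle to be exactly the interface between the two tools: Theorem \ref{thm:factorization} only produces the involution on an \'etale cover $T^{(d)}$, whereas Lemma \ref{lem: Gal-a} can descend birational automorphisms to $T$ only when $d$ is a prime power with large prime. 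A single value of $x$ gives no control over the arithmetic of the denominator $d$, so it is precisely the existence of \emph{two} values $x_1<x_2$ with eventual degree $2$, together with monotonicity, that frees up enough rational $x$ to realize the degree-$2$ phenomenon on a cover of the special form $p^k$ demanded by Lemma \ref{lem: Gal-a}.
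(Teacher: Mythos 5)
Your proposal is correct and follows essentially the same route as the paper's own proof: choose a prime $p$ larger than the constant of Lemma \ref{lem: Gal-a} (applied to the smooth general-type variety $\wt T$), use monotonicity of $x\mapsto m_{L_x|T}$ and density of rationals with denominator a power of $p$ to find $x\in(x_1,x_2)$ with $m_{L_x|T}=2$ realized on a cover of order $d=p^k$, extract the involution from the degree-$2$ eventual map of Theorem \ref{thm:factorization}, and descend it to $T$ via Lemma \ref{lem: Gal-a}. The paper's argument is terser but identical in structure; your write-up merely makes explicit the points it leaves implicit (the interval argument, the Galois property of degree-$2$ maps, and the passage through the desingularization).
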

\begin{proof}

Let $C$ be the constant given by Lemma \ref{lem: Gal-a}  for varieties of dimension $m=\dim T$, and let $p>C$ be a prime. For $k$  large enough we may find a rational number $x=\frac{e}{p^k}$ such that $x_1<x<x_2$. In view of the assumptions and of the properties of the extended eventual degree, we have $m_{L_x|T}=2$.
So, setting $d=p^k$,  by Theorem \ref{thm:factorization} the map ${a_d}_{|T\ud}\colon T\ud \to A$ is composed with an involution $\sigma_d$.
In turn, by the choice of $p$ and by Lemma \ref{lem: Gal-a},  the involution $\sigma_d$ induces an involution $\sigma$  of $T$  such that $a_{|T}$ is composed with $\sigma$.
\end{proof}

\section{Castelnuovo inequalities}\label{sec: castelnuovo revisited}

\subsection{Numerical degree of subcanonicity}\label{ssec: subcan}

In  the case of curves, by Clifford's Theorem the ratio of  the degree of a line bundle  to the number of its  global sections is  controlled by the ratio  of   its degree to the degree of the canonical sheaf. In    \cite{barja-severi} the concept of degree of subcanonicity  was introduced in order to formulate and prove  the  Clifford-Severi inequalities, that hold in arbitrary dimension and can be regarded as a vast generalization of Clifford's Theorem.   Here we slightly modify  this definition in order to simplify the proofs, as follows.

\begin{defn}\label{def: nds}
Let $X$ be a smooth variety of dimension $n\geq 2$, with a map $a\colon X \rightarrow A$ to an abelian variety such that $X$ is of maximal $a$-dimension. Let $L\in \pic(X)$ be a big  line bundle.
\begin{itemize}
\item[(i)] Fix $H$ very ample on $A$ and let $M=a^*H$.  We define the {\em numerical degree  of subcanonicity} (with respect to $M$)
$$
r(L,M):=\frac{LM^{n-1}}{K_X M^{n-1}}\in (0,\infty].
$$
%If  instead $L$ is not big we set $r(L,M)=0$. \rp{modificato cosi'}
\item[(ii)]  We say that $L$ is {\em numerically $r$-subcanonical (with respect to $M$)} if $r(L,M)\leq r$. For simplicity, we say that $L$ is numerically subcanonical if it is numerically $1$-subcanonical.
\end{itemize}
If $n=1$ we define, consistently, $r(L)=\frac{\deg L}{\deg K_X}$.
\end{defn}

\begin{rem}\label{rem: subcan}
The numerical degree of subcanonicity $r(L,M)$ of a big line bundle $L$   has the following properties:
\begin{itemize}
\item If $rK_X-L$ is pseudoeffective (i.e., if $L$ is $r$-subcanonical in the sense of \cite{barja-severi}), then $L$ is numerically $r$-subcanonical.
\item $r(L,M)=\infty$ if and only if $K_XM^{n-1}=0$ and hence if and only if ${\kod}(X)=0$. Indeed, the Kodaira dimension is non negative since $X$ is of maximal Albanese dimension. If $\kod(X)\geq 1$ then  for some $s$ the variety $X$ is  covered by divisors $Q\in |sK_X|$, that are  contracted by $a$ since $H$ is very ample and $QM^{n-1}=0$, contradicting  the assumption that $X$ has maximal $a$-dimension.
%\item Assume that $K_X=L+E$ with $E\geq 0$. Then, $r(L,M)\le 1$ and, if $L$ is big,  equality holds if and only if $E$ is $a$-exceptional.\rp{modificato. Mi sembra che non  usiamo questa osservazione da nessuna  parte, si potrebbe togliere}
\item For  any $d\in \N_{>0}$ we have  $r(L,M)=r(L^{(d)},M^{(d)})=r(L^{(d)},M_d)$. In particular, if $L$ is numerically $r$-subcanonical with respect to $M$, then so is $L^{(d)}$ with respect to $M^{(d)}$ and $M_d$.
\item If $L'\leq L$, then $r(L',M)\leq r(L,M)$ and so if $L$ is numerically $r$-subcanonical then so is $L'$.
\item If $L$ is numerically $r$-subcanonical with respect to $M$, and $M\in |M|$ is a general  divisor, then $L_{|M}$ is numerically $r$-subcanonical with respect to $M_{|M}$ (note that $L_{|M}$ is still big).
\end{itemize}
\end{rem}

\subsection{Continuous  Castelnuovo inequalities}\label{ssec: geografia}

Let us first recall the classical results, which are due mainly to Castelnuovo: see \cite[Chap.III, Sec. 2]{ACGH}.
Let $C$ be a smooth curve and consider a subspace $W\subseteq  H^0(C, L)$ of dimension $r+1\ge 2$ such that the moving part of $|W|$ is a base point free $g^r_d$ (hence $\deg L\geq d$).
Consider the multiplication map
$$
\rho_k\colon \sym^k W \longrightarrow H^0(C, kL).
$$
Let $s:= \left[ \frac{d-1}{r-1}\right]$.
 If  $|W|$ induces a birational morphism, then we have the following estimate on the rank of $\rho_k$ \cite[Lem.~on~p. 115]{ACGH}:
 \begin{equation}\label{eq: castelnuovo}
\rk \rho_k-1\geq \sum_{l=1}^s(l(r-1)+1)+ \sum_{l=s+1}^kd.
\end{equation}
This implies the following inequalities:
\begin{lem}\label{lem: castelnuovo}
Let $C$ be a smooth curve of genus $g$ and $|W|$ a linear system whose  moving part is a base point free $g^r_d$ on $C$. If $k\ge 2$ is an integer,  then
\begin{enumerate}
\item $\rk \rho_k\geq kr+1$;
\item If $|W|$ induces a birational morphism and $d\leq 2g-2$, then
$$\rk \rho_k\geq (2k-1)r.$$

\end{enumerate}
\end{lem}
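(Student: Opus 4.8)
The plan is to interpret $\rk\rho_k$ geometrically and then reduce to the situation in which \eqref{eq: castelnuovo} applies. First I would absorb the fixed part of $|W|$: writing $L=L_0+F$ with $F$ the fixed divisor and $W=s_F\cdot W_0$, where $W_0\subseteq H^0(C,L_0)$ is base point free of dimension $r+1$ and defines the $g^r_d$ (so $\deg L_0=d$), multiplication by $s_F^k$ is injective on sections of the integral curve $C$, whence $\rk\rho_k$ equals the rank of $\rho_k^0\colon\sym^k W_0\to H^0(C,kL_0)$. Thus we may assume $|W|$ is base point free, giving a morphism $\phi\colon C\to \Gamma\subseteq\pp^r$ onto an irreducible non-degenerate curve $\Gamma$. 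Since $\phi$ is dominant onto $\Gamma$, the pullback $\phi^*\colon H^0(\Gamma,\OO_\Gamma(k))\to H^0(C,kL_0)$ is injective, so $\rk\rho_k=\dim R_k$, where $R_k$ is the image of the restriction $H^0(\pp^r,\OO(k))\to H^0(\Gamma,\OO_\Gamma(k))$, i.e.\ the space of degree-$k$ forms modulo those vanishing on $\Gamma$. Crucially, $\dim R_k$ is unchanged if $\Gamma$ is replaced by its normalization.

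For (i) I would reduce to the birational case. Let $\nu\colon \wt{\Gamma}\to\Gamma$ be the normalization and $\wt L=\nu^*\OO_\Gamma(1)$; the induced system on $\wt\Gamma$ is base point free, non-degenerate in $\pp^r$, birational onto $\Gamma$, of degree $d'=\deg\Gamma$, and $\rk\wt\rho_k=\dim R_k=\rk\rho_k$ because $\nu^*$ is injective on sections. As $\Gamma$ is a non-degenerate irreducible curve in $\pp^r$, we have $d'=\deg\Gamma\ge r$. Applying \eqref{eq: castelnuovo} on $\wt\Gamma$ — rewritten, after merging its two sums, as $\rk\rho_k\ge 1+\sum_{l=1}^k\min(d',l(r-1)+1)$ (the summand is $l(r-1)+1$ for $l\le s$ and $d'$ for $l>s$) — each term is at least $r$, since $d'\ge r$ and $l(r-1)+1\ge r$ for every $l\ge1$. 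Hence $\rk\rho_k\ge 1+kr$. The degenerate case $r=1$ is immediate, as then $\Gamma=\pp^1$ and $\rk\rho_k=k+1$.

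For (ii) the morphism $\phi$ is already birational, so $\Gamma=\wt\Gamma$, $\deg\Gamma=d$, and \eqref{eq: castelnuovo} applies directly in the form $\rk\rho_k\ge 1+\sum_{l=1}^k\min(d,l(r-1)+1)$. The key point, and what I expect to be the main obstacle, is to prove $d\ge 2r$; this is where the hypothesis $d\le 2g-2$ enters, through Clifford's theorem. The moving $g^r_d$ is contained in a complete linear system $|D|$ with $\deg D=d$ and $\dim|D|\ge r$; since $0\le d\le 2g-2$, Clifford's inequality (together with Riemann--Roch in the non-special range, which gives $\dim|D|=d-g\le d/2$ because $d\le 2g$) yields $r\le\dim|D|\le d/2$, that is $d\ge 2r$; see \cite{ACGH}.

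Finally, since $\min(d,l(r-1)+1)$ is non-decreasing in $d$ and $d\ge 2r$, each summand satisfies $\min(d,l(r-1)+1)\ge\min(2r,l(r-1)+1)$, so it suffices to evaluate at $d=2r$. Here $r\ge 2$ (the value $r=1$ would force $g=0$, contradicting $d\le 2g-2$), so the terms $\min(2r,l(r-1)+1)$ equal $r$ for $l=1$, equal $2r-1$ for $l=2$, and equal $2r$ for $l\ge 3$; summing gives $1+\sum_{l=1}^k\min(2r,l(r-1)+1)=1+\bigl(r+(2r-1)+(k-2)\cdot 2r\bigr)=(2k-1)r$. Therefore $\rk\rho_k\ge(2k-1)r$. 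The only genuinely delicate steps are the reduction to the normalized/birational situation in (i) and the appeal to Clifford's theorem to secure $d\ge 2r$ in (ii); everything else is elementary manipulation of \eqref{eq: castelnuovo}.
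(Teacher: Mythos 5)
Your proof is correct and follows essentially the same route as the paper's: apply the Castelnuovo estimate \eqref{eq: castelnuovo} in the birational case (using $d\geq r$ for (i), and Clifford's theorem to get $d\geq 2r$, hence $s\geq 2$, for (ii)), and reduce the non-birational case of (i) to the birational one by passing to the normalization of the image curve. You simply make explicit several steps the paper leaves implicit (absorbing the fixed part, rewriting the estimate with $\min(d,l(r-1)+1)$, and the $r=1$ edge cases), which is harmless.
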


\begin{proof}
In the birational case both inequalities follow immediately from equation \eqref{eq: castelnuovo} observing that $d\geq r$ and that for  $d \leq 2g-2$ one has  $d\geq 2r$,  and therefore  $s\ge 2$.

For (i), let us suppose that the linear series $|W|$ is not birational.
Then the morphism it induces  factors through a finite map of degree $\geq 2$.
Let $\Gamma$ be the normalization of the image and $j\colon \Gamma \rightarrow \pp^r$ the induced birational morphism.
Then for any $k\geq 2$ the image of $\rho_k$ coincides with the image of the $k$-th multiplication map for the subspace  of $H^0(\Gamma, j^*\cO_{\pp^r}(1))$ given by $W$,
 so the result follows from the birational case.
\end{proof}

We now extend  Castelnuovo's  result to the continuous setting in dimension 1 and 2; these results  are used in Section \ref{sec: C-S revisited}. In a forthcoming paper we will study  extensions to arbitrary dimension.

\begin{prop}\label{prop: continuous castelnuovo per curve}
Let $X$ be a smooth variety of dimension $n\geq 1$ with a  map $a\colon X \rightarrow A$  to  an abelian variety of dimension $q$ and let $C\subseteq  X$ be  a smooth  curve such that $a_{|C}$ is strongly generating.
Let $L\in \pic(X)$, and $k \in \N$. Then
\begin{enumerate}
\item $h^0_a(X_{|C},kL) \geq k\,h^0_a(X_{|C},L).$
\item If $L_{|C}$ is numerically subcanonical  and $\deg a_{|C}=1$, then $h^0_a(X_{|C},kL) \geq (2k-1)\,h^0_a(X_{|C},L).$
\end{enumerate}
\end{prop}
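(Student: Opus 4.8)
The plan is to reduce both inequalities, via the covering trick and the multiplicativity of the continuous rank (Proposition \ref{prop: mult-rank}), to the classical Castelnuovo bounds of Lemma \ref{lem: castelnuovo} applied on the \'etale covers $C\ud$. Both statements are trivial when $h^0_a(X_{|C},L)=0$ or $k\le 1$, so I assume $h^0_a(X_{|C},L)>0$ and $k\ge 2$; note that then $a_{|C}$ is non-constant, hence generically finite onto its image, so $C$ has maximal $a_{|C}$-dimension and the eventual-map machinery applies. The first ingredient is an inclusion of linear series. For general $\alpha\in\Pic^0(A)$ put $W_\alpha:=H^0(X_{|C},L\otimes\alpha)\subseteq H^0(C,(L\otimes\alpha)_{|C})$, of dimension $h^0_a(X_{|C},L)$. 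Each element of $W_\alpha$ is the restriction to $C$ of a global section of $L\otimes\alpha$ on $X$, so a product of $k$ of them is the restriction of a product of $k$ global sections; hence the image of the multiplication map $\rho_k\colon \sym^k W_\alpha\to H^0(C,(kL\otimes\alpha^k)_{|C})$ lies in $H^0(X_{|C},kL\otimes\alpha^k)$. Since multiplication by $k$ is an isogeny of $\Pic^0(A)$, $\alpha^k$ is general whenever $\alpha$ is, so for general $\alpha$ one gets
\[
h^0_a(X_{|C},kL)=h^0(X_{|C},kL\otimes\alpha^k)\ge \rk\rho_k.
\]

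Applying Lemma \ref{lem: castelnuovo}(i) with $r=h^0_a(X_{|C},L)-1$ gives $\rk\rho_k\ge kr+1=k\,h^0_a(X_{|C},L)-(k-1)$, which falls short of the target by the constant $k-1$. To absorb this deficit I would run the identical argument on the multiplication cover. Replacing $(X,C,L)$ by $(X\ud,C\ud,L\ud)$ and setting $r_d:=h^0_{a_d}(X\ud_{|C\ud},L\ud)-1=d^{2q}h^0_a(X_{|C},L)-1$ by Proposition \ref{prop: mult-rank}, the inclusion above combined with Lemma \ref{lem: castelnuovo}(i) yields
\[
d^{2q}h^0_a(X_{|C},kL)=h^0_{a_d}(X\ud_{|C\ud},kL\ud)\ge k\,r_d+1=k\,d^{2q}h^0_a(X_{|C},L)-(k-1),
\]
using $kL\ud=(kL)\ud$. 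Dividing by $d^{2q}$ gives $h^0_a(X_{|C},kL)\ge k\,h^0_a(X_{|C},L)-\frac{k-1}{d^{2q}}$; as both sides are fixed integers, letting $d\to\infty$ clears the error term and proves (i).

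For (ii) the same scheme works with Lemma \ref{lem: castelnuovo}(ii) in place of (i), provided I secure its hypotheses on $C\ud$ for $d\gg 0$ and general $\alpha$. Numerical subcanonicity is preserved under the \'etale cover (Remark \ref{rem: subcan}), and since $C\ud\to C$ is \'etale one has $\deg\bigl(L\ud_{|C\ud}\bigr)=d^{2q}\deg(L_{|C})\le d^{2q}(2g(C)-2)=2g(C\ud)-2$, so the moving $g^{r_d}_{d'}$ satisfies $d'\le 2g(C\ud)-2$. The genuinely new input is birationality: since $\deg a_{|C}=1$, Corollary \ref{cor:birat}(1) shows $|L|_{|C}$ is eventually generically birational, so for $d\gg 0$ and general $\alpha$ the moving part of $|W_\alpha|$ on $C\ud$ induces a birational morphism, as required. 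Lemma \ref{lem: castelnuovo}(ii) then gives $\rk\rho_k\ge(2k-1)r_d$, and the same division by $d^{2q}$ followed by $d\to\infty$ produces $h^0_a(X_{|C},kL)\ge(2k-1)\,h^0_a(X_{|C},L)$.

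I expect the main obstacle to be precisely the verification that the hypotheses of the classical Castelnuovo lemma genuinely hold on the covers $C\ud$ for general $\alpha$ and $d\gg 0$ --- chiefly the eventual birationality of the restricted system in part (ii), which is exactly where the Factorization Theorem (via Corollary \ref{cor:birat}) is indispensable, together with the bookkeeping that lets me pass freely between ``$\alpha$ general'' and ``$\alpha^k$ general''. By contrast the limiting argument that removes the additive constant is routine, and it is the reason the statement is clean only after the passage $d\to\infty$.
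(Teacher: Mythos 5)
Your proposal is correct and follows essentially the same route as the paper: apply the classical Castelnuovo bounds of Lemma \ref{lem: castelnuovo} to the restricted continuous sections on the covers $C\ud$, use multiplicativity of the continuous rank (Proposition \ref{prop: mult-rank}) together with the eventual birationality coming from the Factorization Theorem for part (ii), and let $d\to\infty$ to absorb the additive constant. The only differences are cosmetic: the paper works directly on the covers and invokes Theorem \ref{thm:factorization} rather than Corollary \ref{cor:birat}, while you spell out the $\alpha$ versus $\alpha^k$ bookkeeping and the degree bound $\deg\bigl(L\ud_{|C\ud}\bigr)\le 2g(C\ud)-2$, which the paper leaves implicit.
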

\begin{proof}

\noindent(i) Since the inequality is obviously satisfied if $h^0_a(X_{|C},L)=0$, we may assume that $h^0_a(X_{|C},L)>0$.   For  $d \geq 2$,  set
$
W_d=H_{a_d}^0(X^{(d)}_{|C^{(d)}},L^{(d)})$.
Observe that the image of the  $k$-th multiplication map
$$
\rho_{k,d}\colon{\rm Sym}^kW_d \longrightarrow H^0_{a_d}(C^{(d)},{(kL)^{(d)}}_{|C^{(d)}})
$$

\noindent is  contained in $H^0_{a_d}({X^{(d)}}_{|C\up{d}},kL^{(d)})$.
Then Lemma \ref{lem: castelnuovo} (i) applied to $W_d$, and the multiplicativity of the restricted continuous rank give
\begin{gather*}
d^{2q}h^0_a(X_{|C},kL)=h^0_{a_d}({X^{(d)}}_{|C^{(d)}},(kL)^{(d)})\geq \rk \rho_{k,d} \geq\\
\geq kh^0_{a_d}({X^{(d)}}_{|C^{(d)}},L^{(d)})-(k-1)=d^{2q}kh^0_a(X_{|C},L)-(k-1),
\end{gather*}
and so by letting $d$ go to infinity we obtain
$$
h^0_a(X_{|C},kL)\geq k h^0_a(X_{|C},L),
$$
as wanted.

\medskip

\noindent(ii) The second inequality can be proven in a similar  way.  Again we may assume $h^0_a(X_{|C}, L)>0$, the claim being trivially true otherwise.

Observe that, if $\deg a_{|C}=1$, then by Theorem  \ref{thm:factorization}  we have that
for $d\gg 0$  and $\alpha\in \Pic^0(A)$  general the map induced by the system $|L\otimes \alpha|_{|C}$  on $C$ is  generically injective.
So by Lemma \ref{lem: castelnuovo} (ii) we have the inequality
$$
h^0_{a_d}({X^{(d)}}_{|C^{(d)}},(kL)^{(d)})\geq \rk \rho_{k,d} \geq (2k-1) \,h^0_{a_d}({X^{(d)}}_{|C^{(d)}},L^{(d)})-(2k-1)
$$
 and  we  just take the limit for  $d\to \infty$ as in the proof of (i).
\end{proof}

Now we can deduce the Continuous Castelnuovo inequalities for surfaces:

\begin{thm}\label{teo: higher Castelnuovo}
Let $S$ be a smooth surface with  map $a\colon X \rightarrow A$ to  an abelian variety such that $S$ is of maximal $a$-dimension. Let $L\in \pic(X)$, and $k \in \N$. Then
\begin{enumerate}
\item $h^0_a(S,kL) \geq k^2\,h^0_a(S,L).$
\item If $\deg a=1$ and  $L$ is numerically subcanonical for some $M=a^*H$,  then $h^0_a(S,kL) \geq (2k-1)k\,h^0_a(S,L).$
\end{enumerate}
\end{thm}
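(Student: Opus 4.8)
The plan is to deduce both surface inequalities from the curve case (Proposition \ref{prop: continuous castelnuovo per curve}) by means of the continuous rank function and the derivative formula of Theorem \ref{thm: main-rank}. Assume $h^0_a(S,L)>0$, the inequalities being trivial otherwise, and introduce the two functions $\phi(x):=h^0_a(S,L+xM)$ and $\phi_{[k]}(x):=h^0_a(S,kL+xM)$. By Theorem \ref{thm: main-rank} both are convex and non decreasing on $\R$ (note $q\ge 2$, since $S$ has maximal $a$-dimension); since $M=a^*H$ is nef and big, both vanish for $x\ll 0$ by Proposition \ref{prop: big}, and being convex they are locally Lipschitz, hence absolutely continuous. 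The fundamental theorem of calculus then yields
\begin{equation}\label{eq: FTC-rank}
h^0_a(S,L)=\int_{-\infty}^{0}\phi'(t)\,dt,\qquad h^0_a(S,kL)=\int_{-\infty}^{0}\phi_{[k]}'(t)\,dt,
\end{equation}
where $\phi'$, $\phi_{[k]}'$ denote the left derivatives, which exist everywhere. In this way the whole problem is reduced to a pointwise comparison of the two derivatives.

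The key step is the inequality
\begin{equation}\label{eq: der-comp}
\phi_{[k]}'(x)\ge k\,\phi'(x/k)\qquad\text{for all }x\le 0.
\end{equation}
To prove it I would use the explicit formula of Theorem \ref{thm: main-rank}(ii), which computes $\phi'(x)$ (resp.\ $\phi_{[k]}'(x)$) as the limit for $d\to\infty$ of $d^{-(2q-2)}h^0_{a_d}(S^{(d)}_{|M_d},(L_x)^{(d)})$ (resp.\ of the same expression with $kL$ in place of $L$), where $M_d\in|a_d^*H|$ is a general, hence smooth, curve with $(a_d)_{|M_d}$ strongly generating. Since $kL+xM=k\bigl(L+\tfrac{x}{k}M\bigr)$, one has $((kL)_x)^{(d)}=k\,(L_{x/k})^{(d)}$ on $S^{(d)}$, so applying the curve inequality Proposition \ref{prop: continuous castelnuovo per curve}(i) on $M_d$ to the line bundle $(L_{x/k})^{(d)}$ with multiplier $k$ gives
\begin{equation}\label{eq: curve-applied}
h^0_{a_d}\!\bigl(S^{(d)}_{|M_d},((kL)_x)^{(d)}\bigr)\ge k\,h^0_{a_d}\!\bigl(S^{(d)}_{|M_d},(L_{x/k})^{(d)}\bigr).
\end{equation}
Dividing \eqref{eq: curve-applied} by $d^{2q-2}$ and letting $d\to\infty$, the left side tends to $\phi_{[k]}'(x)$ and the right side to $k\,\phi'(x/k)$, which is \eqref{eq: der-comp}.

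Granting \eqref{eq: der-comp}, part (i) follows at once by integrating \eqref{eq: FTC-rank} and substituting $u=t/k$:
\begin{equation}\label{eq: integrate}
h^0_a(S,kL)=\int_{-\infty}^{0}\phi_{[k]}'(t)\,dt\ge \int_{-\infty}^{0}k\,\phi'(t/k)\,dt=k^2\int_{-\infty}^{0}\phi'(u)\,du=k^2\,h^0_a(S,L).
\end{equation}
For part (ii) the only change is to invoke Proposition \ref{prop: continuous castelnuovo per curve}(ii) in place of (i), which replaces the factor $k$ in \eqref{eq: der-comp} by $2k-1$ and hence yields $h^0_a(S,kL)\ge (2k-1)k\,h^0_a(S,L)$. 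Its hypotheses hold along the relevant range $x\le 0$: there $\tfrac{x}{k}\le 0$, so $L_{x/k}\le L$ and thus $L_{x/k}$ is still numerically subcanonical by Remark \ref{rem: subcan}, the property being inherited on the general curve section $M_d$; moreover $\deg(a_d)_{|M_d}=1$ because $\deg a=1$. Where the derivative vanishes the inequality is trivial, so nothing is needed there.

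The main obstacle I expect is making \eqref{eq: der-comp} fully rigorous. One must check that the $\Q$-divisor $(L_{x/k})^{(d)}$ can be rendered an honest line bundle on a suitable connected multiplication cover so that Proposition \ref{prop: continuous castelnuovo per curve} literally applies (this is immediate for the rational $x$ with $x/k$ of the form $e/d_0^2$, and the general case follows by the monotonicity and continuity of the derivative of a convex function), that the general curve $M_d$ can be chosen compatibly for both $\phi$ and $\phi_{[k]}$ and simultaneously with the limit, and that passing to the limit $d\to\infty$ preserves the inequality; together with the routine but careful verification that the numerical subcanonicity and birationality hypotheses of part (ii) descend to the general section $M_d$.
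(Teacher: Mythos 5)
Your proof is correct and follows essentially the same route as the paper's: the same pointwise comparison of left derivatives, $D^{-}\phi_k(kx)\ge k\,D^{-}\phi(x)$ (equivalently your $\phi_{[k]}'(x)\ge k\,\phi'(x/k)$), obtained by applying Proposition \ref{prop: continuous castelnuovo per curve} on the general curves $M_d\subseteq S^{(d)}$ and passing to the limit $d\to\infty$ via Theorem \ref{thm: main-rank}, followed by the same integration with the substitution $u=t/k$, and the same replacement of part (i) by part (ii) of the curve proposition (with the subcanonicity and degree-one hypotheses checked on $M_d$) for the second inequality. The only detail you omit is the preliminary reduction, via Remark \ref{rem: strongly}, to the case where $a$ is strongly generating, which is needed before the covering machinery ($S^{(d)}$, $M_d$, Theorem \ref{thm: main-rank}) can legitimately be invoked, since the theorem itself only assumes maximal $a$-dimension.
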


\begin{proof}
By Remark \ref{rem: strongly}, it is enough to consider the case when $a$ is strongly generating.

(i) Take a very ample line bundle $H$ on $A$ and let $M=a^*H$.
For  $x\in \R$  set, as usual, $L_x=L+xM$.
Consider the functions
$$
\phi(x):=h^0_a(S,L_x)\,\,\mbox{ and }\phi_k(x):=h^0_a(S,(kL)_x).
$$
Take $x=\frac{e}{d^2}\in\mathbb{Q}$. By Proposition \ref{prop: continuous castelnuovo per curve} (i) applied to $(S^{(d)},M_d,(L_x)^{(d)})$, we have that
\begin{equation}\label{eq: derivate}
h^0_{a_d}({S^{(d)}}_{|M_d},(kL_x)^{(d)})\geq k h^0_{a_d}({S^{(d)}}_{|M_d},(L_x)^{(d)}).
\end{equation}
Since both sides of \eqref{eq: derivate} are continuous functions of $x$, it follows that \eqref{eq: derivate} holds for all $x\in \R$.  Therefore by Theorem \ref{thm: main-rank} we have
$$
D^{-}\phi_k(kx)\geq k D^{-}\phi(x)
$$
for all $x\in \R$.
\begin{comment}
By Theorem \ref{thm: main-rank}, we have an explicit formula for the left-sided derivative of these functions and so, dividing by $d^{2q-2}$ and taking the limit we have

$$
D^{-}\phi_k(kx)\geq k D^{-}\phi(x)
$$
for any rational number  $x$.  Since both $\phi$ and $\phi_k$ are  convex functions, their left-sided derivatives are continuous except possibly at a countable set of points.

So we can conclude that the above inequality holds except on this countable set, that we can avoid in the following integration argument.\end{comment}
 Now just  compute
\begin{align*}
h^0_a(S,kL)&=\phi_k(0)=\int_{-\infty}^{0}D^{-}\phi_k(t)dt=k\int_{-\infty}^0D^{-}\phi_k(ky)dy\geq\\
&\geq k^2\int_{-\infty}^0 D^{-}\phi(y)dy=k^2\phi(0)=k^2h^0_a(S,L).
\end{align*}

\medskip

\noindent(ii) Observe that for  a general curve $M_d$ the map ${a_d}_{|M_d}$  is strongly generating  and of degree 1. Hence we can apply inequality (ii)  of Proposition \ref{prop: continuous castelnuovo per curve} in the argument above.
\end{proof}

\begin{cor}\label{cor: severi-3fold}
Let $S$ be a smooth minimal surface, with Albanese map of degree 1. Then
$$
K^2_S \geq 5\, \chi(\omega_S).
$$
\end{cor}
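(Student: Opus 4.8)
The plan is to feed the line bundle $L=K_S$ into the continuous Castelnuovo inequality of Theorem~\ref{teo: higher Castelnuovo}(ii) at the single exponent $k=2$, and to combine this with an exact evaluation of the continuous rank of $2K_S$ coming from generic vanishing. First I would take $a=\alb_S\colon S\to A:=\Alb(S)$ to be the Albanese map: it is strongly generating, since $a^*\colon\Pic^0(A)\to\Pic^0(S)$ is an isomorphism, and by hypothesis $\deg a=1$, so $a$ is generically finite and $S$ has maximal $a$-dimension. I may assume $S$ is of general type: if not, then $\kod(S)\le 1$ and minimality force $K_S^2=0$, while maximal Albanese dimension gives $\chi(\omega_S)\ge 0$ and the Clifford--Severi inequality \eqref{eq: gen-Severi1} applied to the nef bundle $K_S$ yields $0=\vol(K_S)\ge 2\chi(\omega_S)$, so $\chi(\omega_S)=0$ and the assertion becomes the trivial equality $0\ge 0$.

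Assume then $S$ of general type and fix $H$ very ample on $A$, $M=a^*H$. Since $r(K_S,M)=\frac{K_SM}{K_SM}=1$, the big line bundle $K_S$ is numerically subcanonical with respect to $M$, so Theorem~\ref{teo: higher Castelnuovo}(ii) applies and, taking $k=2$, gives $h^0_a(S,2K_S)\ge (2\cdot 2-1)\cdot 2\cdot h^0_a(S,K_S)=6\,h^0_a(S,K_S)$. Now both $K_S=K_S+0$ and $2K_S=K_S+K_S$ are adjoints of nef divisors (namely $0$ and $K_S$, using minimality of $S$), so generic vanishing gives $h^0_a(S,K_S)=\chi(\omega_S)$ and $h^0_a(S,2K_S)=\chi(2K_S)$, while Riemann--Roch evaluates the latter as $\chi(2K_S)=\chi(\omega_S)+K_S^2$. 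Substituting, $\chi(\omega_S)+K_S^2\ge 6\chi(\omega_S)$, that is $K_S^2\ge 5\chi(\omega_S)$.

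The delicate point — and the place where a careless argument recovers only the weaker Severi bound $K_S^2\ge 4\chi(\omega_S)$ — is the choice of exponent. Since $h^0_a(S,kK_S)=\chi(kK_S)=\chi(\omega_S)+\tfrac12 k(k-1)K_S^2$ holds exactly for every $k\ge 1$, the inequality $h^0_a(S,kK_S)\ge (2k-1)k\,\chi(\omega_S)$ rearranges to $K_S^2\ge\bigl(4+\tfrac{2}{k}\bigr)\chi(\omega_S)$ for $k\ge 2$, and this is strongest at the \emph{smallest} admissible value $k=2$, not in the limit $k\to\infty$. Thus the entire gain from $4\chi$ to $5\chi$ is concentrated in using $k=2$ together with the fact that for $2K_S$ the continuous rank is computed with no loss through generic vanishing. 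The only remaining verifications are routine: that the three hypotheses of Theorem~\ref{teo: higher Castelnuovo}(ii) — maximal $a$-dimension, $\deg a=1$, and numerical subcanonicity of the big bundle $K_S$ — all hold, which they do in the general-type case.
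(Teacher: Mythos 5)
Your proof is correct and follows exactly the paper's own argument: apply Theorem \ref{teo: higher Castelnuovo}(ii) with $k=2$ to $L=K_S$, identify $h^0_a(S,K_S)=\chi(\omega_S)$ and $h^0_a(S,2K_S)=\chi(\omega_S^{\otimes 2})$ via generic vanishing, and conclude by Riemann--Roch. Your two additions --- the explicit reduction to the general-type case (a reasonable precaution, since numerical subcanonicity is defined in the paper only for big line bundles) and the observation that $k=2$ is the optimal exponent --- are both sound and merely make explicit what the paper's terse proof leaves implicit.
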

\begin{proof}
Let us apply Theorem \ref{teo: higher Castelnuovo} (ii) to $S$ and $L=K_S$  and to the Albanese map $a\colon S\to A$. We get
$$
\chi(\omega_S^{\otimes 2})=h^0_a(\omega_S^{\otimes 2})\geq 6 h^0_a(\omega_S)=6\chi(\omega_S),
$$
where the last equality follows by the Generic Vanishing Theorem.

\end{proof}

\section{Clifford-Severi inequalities}\label{sec: C-S revisited}

Let $X$ be a smooth projective variety with a map $a\colon X\to A$ to an abelian variety  that is of maximal $a$-dimension. The Main Theorem of \cite{barja-severi}  (Clifford-Severi inequality) is sharp for  a   subcanonical nef line bundle  $L$ on  $X$.
Here  we extend that  result in several ways. First we drop the nefness assumption, working with volumes rather than  with intersection numbers, and   we extend the result to the  relative  setting,  considering  the restricted volume  and the restricted continuous rank (Theorem \ref{teo: Clifford-Severi1}).  In addition, we strengthen the inequality under extra assumptions on the geometry of the map $a$ (Theorem \ref{teo: Clifford-Severi2}). The proofs   are   direct applications of the computations on the derivatives of the restricted continuous rank and volume of Section \ref{sec: rank}.

In this section, we will consider the following

\begin{hyp}\label{hyp: sezzione 6} We assume that $X$ is a smooth variety of dimension $n\geq 1$,     $a\colon X \rightarrow A$  is a map to an abelian variety such that $X$ has maximal $a$-dimension,
 $T\subseteq  X$ is a smooth $a$-general subvariety (Definition \ref{def: a-general})  of dimension $m\ge 1$ and $L\in \Pic(X)$ is a line bundle such that  $T\not\subseteq {\bf B}_+(L)$.
 \end{hyp}

\begin{rem}
Observe that we have the following implications:
$$
h^0_a(X_{|T},L)\neq 0 \Rightarrow T\not\subseteq {\bf B}_+(L) \Rightarrow {\bf B}_+(L)\not=X\, (\mbox{i.e. $L$ is big}),
$$
where the first implication derives from Lemma \ref{lem: B+}.
\begin{comment}
Observe the following:
\begin{itemize}
\item[-] the condition $T\not\subseteq {\bf B}_+(L)$ of Hypoteses \ref{hyp: sezzione 6} above is satisfied if $h^0_a(X_{|T},L)>0$, by Lemma \ref{lem: B+};
\item[-] this condition $T\not\subseteq {\bf B}_+(L)$ implies that ${\bf B}_+(L)\not=X$, i.e. that $L$ is big.
\end{itemize}
\end{comment}
\end{rem}
\begin{defn}\label{defn: slope}
In the Hypotheses \ref{hyp: sezzione 6}, we define the {\em slope} of $(X,T,L)$ with respect to $a$ as
$$\lambda_T(L):=\frac{\vol_{X|T}(L)}{h^0_a(X_{|T},L)}\in(0,+\infty]$$

In the absolute case, i.e.,  when $T=X$,  we will simply use the notation $\lambda(L)$. Moreover, when $X=T$, $a={\alb}_X$ and $L=K_X$ we  write $\lambda (X)$.
\end{defn}

\begin{rem}\label{rem: lambda-num}
Assume Hypotheses \ref{hyp: sezzione 6}.    The subvariety   $T$ is not contained in ${\bf B}_+(L)$, and so  $\vol_{X|T}(L)$ depends only on the numerical equivalence class of $L$ \cite[Thm.~A]{elmnp}. It follows that   $\lambda_T(L)$ is an invariant of the class of $L$ in $\pic (X)/\pic^0(A)$.
\end{rem}

\begin{rem}\label{rem: vol-mult}
Assume Hypotheses \ref{hyp: sezzione 6}.
If $\eta\colon  X'\to X$ is a birational morphism with $X'$ smooth  such that $T$ is  not contained in the exceptional locus of $\eta\inv$ and $T'\subseteq X'$ is the strict transform of $T$, then we have  $\lambda_T(L)=\lambda_{T'}(\eta^*L)$. Indeed,  the restricted continuous rank does not change, since $X$ is smooth,  and   the restricted volume is  also invariant.

In addition,  we have  $\lambda_T(L)=\lambda_{T\ud}(L\ud)$ for every integer $d>0$ by Proposition \ref{prop: mult-rank} and Lemma  \ref{lem: mult-vol}.
\end{rem}

We will need the following definition:

\begin{defn}\label{defn: delta}
Let $r\in(0,+\infty)$ be a real number.
$$
\begin{array}{rl}
\delta(r)=\begin{cases} 2, & r\leq 1 \\
 \frac{2r}{2r-1},   & r\geq 1
\end{cases} & \quad \quad
\overline{\delta}(r)=\begin{cases}
                                          3 , &  r\leq\frac{1}{2} \\
                                          \frac{6r}{4r-1}, & \frac{1}{2}\leq r\leq 1 \\
                                           \frac{2r}{2r-1}, &  r\geq 1
                                        \end{cases}
\end{array}
$$
Moreover we define, consistently, $\delta(\infty)={\overline \delta}(\infty)=1$.
\end{defn}
The functions  $\delta(r)$ and ${\overline \delta}(r)$ are
 non increasing functions and their  graphs are given in the following figure:

\begin{center}
\begin{tikzpicture}
\begin{axis}[
%title=Lidia,
axis x line=bottom,
axis y line=left,
xmin=0, xmax=3,
ymin=0, ymax=4,
xlabel={$r$},
%xtick distance=0.5,
minor xtick={0.5,1},grid=minor
]

%delta
\addplot[
domain=0:1,
samples=200,
]
{2}
[yshift=8pt]
node[pos=0.25] {$\delta$};
\addplot[
domain=1:5,
samples=200,
]
{2*x/(2*x-1)};

%deltabar
\addplot[
domain=0:0.5,
samples=200,
red,
]
{3}
[yshift=8pt]
node[pos=0.5] {$\bar{\delta}$};
\addplot[
domain=0.5:1,
samples=200,
red
]
{6*x/(4*x-1)};
\addplot[
domain=1:5,
samples=200,
red
]
{2*x/(2*x-1)};

%asintoto
\addplot[
dashed,
domain=0:5,
samples=200]
{1};

\end{axis}
\end{tikzpicture}
\end{center}

Now we can state the main theorems of this section:

\begin{thm}[Clifford-Severi inequalities 1]\label{teo: Clifford-Severi1}
Let  $\delta$ and $\overline \delta$ be  the functions of Definition \ref{defn: delta}. Assume that  Hypotheses \ref{hyp: sezzione 6} hold and  that $L_{|T}$ is numerically  $r$-subcanonical with respect to $M_T=a_{|T}^*H$ for some very ample $H\in\pic^0(A)$  (with $r\in(0,\infty]$).  Then

\begin{enumerate}
\item $\lambda_T(L)\ge \delta(r) m!$.
\item  If $T$ is of general type and $a_{|T}$ is not composed with an involution, then
$$\lambda_T(L)\ge {\overline \delta}(r) m!.$$
\end{enumerate}
\end{thm}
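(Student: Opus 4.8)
The plan is to prove both inequalities simultaneously by induction on $m=\dim T$, writing $C=\delta$ in case (i) and $C=\overline{\delta}$ in case (ii) and using throughout that $\delta$ and $\overline{\delta}$ are non-increasing. By Remark \ref{rem: strongly} I may assume $a_{|T}$ is strongly generating, and since $\lambda_T(L)=+\infty$ when $h^0_a(X_{|T},L)=0$, I assume $h^0_a(X_{|T},L)>0$. The idea is to bound the slope $\lambda_T(L)=\vol_{X|T}(L)/h^0_a(X_{|T},L)$ by integrating over $x\in(-\infty,0]$ a pointwise inequality between the left derivatives of the two continuous functions $\phi_T(x):=h^0_a(X_{|T},L_x)$ and $\psi_T(x):=\vol_{X|T}(L_x)$ attached to $L_x=L+xM$.

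For the base case $m=1$ I put $T=C$, $r=r(L_{|C})$, and note $\vol_{X|C}(L)=\deg(L_{|C})$. In case (i): if $r\le1$ the continuous Clifford inequality gives $\deg(L_{|C})\ge 2\,h^0_a(X_{|C},L)$, hence $\lambda_C(L)\ge2=\delta(r)$; if $r\ge1$ a general twist has $h^0_a(X_{|C},L)\le\deg(L_{|C})-g+1$ by Riemann--Roch, whence $\lambda_C(L)\ge\tfrac{2r}{2r-1}=\delta(r)$. In case (ii) the hypothesis that $a_{|C}$ is not composed with an involution forces $m_{L|C}\ne2$ by Corollary \ref{cor:birat}. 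If $m_{L|C}=1$ the eventual system is birational, so the argument of Proposition \ref{prop: continuous castelnuovo per curve}(ii) yields $h^0_a(X_{|C},2L)\ge 3\,h^0_a(X_{|C},L)$; feeding this into Clifford (for $r\le\tfrac12$) or Riemann--Roch (for $\tfrac12\le r\le1$) applied to $2L$ produces exactly the three branches of $\overline{\delta}(r)$. If instead $m_{L|C}\ge3$, then by Theorem \ref{thm:factorization} $a_{|C}$ factors through the eventual map $\fie\colon C\to Z$ of degree $\nu\ge3$, the continuous rank is computed on $Z$, and case (i) applied to $Z$ gives $\lambda_C(L)\ge\nu\,\lambda_Z(\ol L)\ge\nu\ge3\ge\overline{\delta}(r)$.

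For the inductive step ($m\ge2$) I take $T_1=T\cap M$ a general member of $|M_T|$: it is a smooth $a$-general subvariety of dimension $m-1$ with $L_{|T_1}$ numerically $r$-subcanonical (Remark \ref{rem: subcan}), and in case (ii) it is moreover of general type and $a_{|T_1}$ is not composed with an involution, by adjunction and Proposition \ref{prop: composto involuzione}. Both $\phi_T$ and $\psi_T$ are continuous, vanish for $x\ll0$ and are absolutely continuous (a difference of convex functions, resp.\ a volume function differentiable on its positive locus), so $\phi_T(0)=\int_{-\infty}^0 D^-\phi_T$ and $\psi_T(0)=\int_{-\infty}^0 D^-\psi_T$. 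The heart of the matter is the pointwise inequality
\[
D^-\psi_T(x)\ \ge\ C(r)\,m!\,D^-\phi_T(x),\qquad x\le0.
\]
To prove it at a rational $x=e/d^2\le0$, I apply the inductive hypothesis, on the $(m-1)$-dimensional subvariety $M_dT\ud\subseteq X\ud$ obtained by intersecting $T\ud$ with a general member of $|M_d|$, to the line bundle $(L_x)\ud$; since $(L_x)_{|T_1}$ is numerically $r(L_x)$-subcanonical with $r(L_x)\le r$ and $C$ is non-increasing, this gives
\[
\vol_{X\ud|M_dT\ud}((L_x)\ud)\ \ge\ C(r)\,(m-1)!\,h^0_{a_d}(X\ud_{|M_dT\ud},(L_x)\ud).
\]
Dividing by $d^{2q-2}$ and letting $d\to\infty$, the left side equals $\vol_{X|T_1}(L_x)$ (combine the restricted-volume derivative formula of Proposition \ref{lem: derivata volume}, upstairs and downstairs, with the multiplicativity of Lemma \ref{lem: mult-vol}), while the right side tends to $C(r)(m-1)!\,D^-\phi_T(x)$ by the restricted analogue of \eqref{eq: derivative}, proved as in Lemma \ref{lem: Delta}. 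Since $D^-\psi_T(x)=m\,\vol_{X|T_1}(L_x)$, the displayed inequality follows; by continuity it extends to all $x\le0$, and integrating over $(-\infty,0]$ yields $\psi_T(0)\ge C(r)\,m!\,\phi_T(0)$, that is $\lambda_T(L)\ge C(r)\,m!$. In case (ii) I run $d\to\infty$ along $d=p^k$ for a fixed large prime $p$, so that $a_d$ stays non-composed with an involution on $T\ud$ by Lemma \ref{lem: Gal-a}.

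The hard part will be the passage to the limit in the inductive step: one must know both that $d^{-(2q-2)}\vol_{X\ud|M_dT\ud}((L_x)\ud)$ is exactly $\vol_{X|T_1}(L_x)$ and that $d^{-(2q-2)}h^0_{a_d}(X\ud_{|M_dT\ud},(L_x)\ud)\to D^-\phi_T(x)$, which hinges on having the derivative formula and multiplicativity for \emph{restricted} volume and \emph{restricted} continuous rank, and on comparing the class-$H$ section $M_dT\ud$ with the pullback section $T_1\ud$ of class $d^2H$. A second delicate point is the curve case of (ii): pinning down the precise function $\overline{\delta}$ requires the sharp Castelnuovo estimate \eqref{eq: castelnuovo} at $k=2$ rather than a crude asymptotic, while checking that the alternative $m_{L|C}\ge3$ is absorbed by the bound $\overline{\delta}\le3$. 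Finally, keeping the non-composed-with-an-involution hypothesis alive under both hyperplane sectioning and multiplication covers---via Proposition \ref{prop: composto involuzione} and Lemma \ref{lem: Gal-a}---is exactly what lets the improved constant $\overline{\delta}$ propagate through the induction.
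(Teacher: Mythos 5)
Your base case $m=1$ is essentially the paper's Lemma \ref{lem: 1-previa a teorema C-S} (with minor, correct variations: you feed the Castelnuovo bound $h^0_a(2L)\ge 3h^0_a(L)$ into Clifford applied to $2L$ where the paper invokes the Clifford+ theorem), and your bookkeeping of the hypotheses in case (ii) via Lemma \ref{lem: Gal-a} and Proposition \ref{prop: composto involuzione} matches the paper. But the inductive step has a genuine gap: you run the derivative-and-integrate argument directly on the \emph{restricted} functions $\phi_T(x)=h^0_a(X_{|T},L_x)$ and $\psi_T(x)=\vol_{X|T}(L_x)$, and for this you need two formulas that the paper does not provide and that do not follow "as in" the results you cite. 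First, $D^-\psi_T(x)=m\,\vol_{X|T_1}(L_x)$: Proposition \ref{lem: derivata volume}(i) (Boucksom--Favre--Jonsson) gives a derivative formula only for the \emph{absolute} volume $\vol_X(L_x)$; part (ii) gives mere continuity of $\vol_{X|T}(L_x)$, and no differentiability statement (let alone absolute continuity, which you also need to write $\psi_T(0)=\int_{-\infty}^0 D^-\psi_T$) is available for restricted volumes. Second, the "restricted analogue of \eqref{eq: derivative}": the proof of Lemma \ref{lem: Delta} rests on the exactness of $0\to H^0(X,L-M_d)\to H^0(X,L)\to H^0(X_{|M_d},L)\to 0$, which holds because $H^0(X_{|M_d},L)$ is \emph{defined} as the image of restriction. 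After further restricting to $T$ this breaks: a section of $L$ whose restriction to $T$ vanishes on $T\cap M_d$ need not come from a section of $L-M_d$ on $X$, so the kernel of $H^0(X_{|T},L)\to H^0(X_{|T\cap M_d},L)$ can be strictly larger than the image of $H^0(X_{|T},L-M_d)$. One only gets $\phi_T(x)-\phi_T(x-\tfrac{1}{d^2})\ \ge\ \tfrac{1}{d^{2q}}h^0_{a_d}(X\ud_{|M_dT\ud},(L_x)\ud)$, i.e. $D^-\phi_T$ dominates your limit rather than equalling it --- which is the \emph{wrong} direction: your chain needs the doubly-restricted ranks to dominate $D^-\phi_T$. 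Note also that $\phi_T$ is in general not convex (the paper exhibits a counterexample on $A_1\times C$), a warning sign that the restricted rank function does not behave like the absolute one.

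The paper circumvents exactly this obstruction by reducing the relative statement to the absolute one \emph{before} inducting: using the continuous decomposition $L=P+D$ of \S\ref{ssec: continuous-res} (after a covering $\wt\mu_d$ and a blow-up, $|P\otimes\alpha|$ is base point free and $T\not\subseteq\operatorname{Supp}D$), one gets $\vol_{X|T}(L)\ge \vol_{X|T}(P)=P^mT=\vol_{\wt T}(P_{|\wt T})$ by \cite[Cor.~2.17]{elmnp} and $h^0_a(X_{|T},L)=h^0_a(X_{|T},P)\le h^0_{\wt a}(\wt T,P_{|\wt T})$ on a resolution $\wt T$ of $T$, so that $\lambda_T(L)$ is bounded below by an absolute slope on $\wt T$. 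Only then does the induction run, with the functions $\phi(x)=h^0_a(X,L_x)$ and $\psi(x)=\vol_X(L_x)$ of the (new, absolute) ambient variety, for which Theorem \ref{thm: main-rank}(ii) and Proposition \ref{lem: derivata volume}(i) genuinely supply $D^-\phi$ and $\psi'$; the inductive hypothesis in dimension $m-1$ is then applied to the triple $(M_d,(P_x)\ud_{|M_d},{a_d}_{|M_d})$, where base-point-freeness of the continuous moving part again converts restricted volumes into honest intersection numbers. Your proposal never invokes the continuous moving part in the inductive step, and without that reduction the two limit identities you flag as "the hard part" are not merely hard --- one is unavailable and the other fails as stated.
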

\bigskip

\begin{thm}[Clifford-Severi inequalities 2]\label{teo: Clifford-Severinuovo}
In the Hypotheses \ref{hyp: sezzione 6}  one has:
$$\lambda_T(L)\ge  m_{L|T}\,m!, $$
where $m_{L|T}$ is the eventual degree of $L$ with respect to $T$ (Definition \ref{defn: eventual degree}).
\end{thm}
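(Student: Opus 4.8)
The plan is to prove $\vol_{X|T}(L)\ge m_{L|T}\,m!\,h^0_a(X_{|T},L)$ by induction on $m=\dim T$, the engine being a pointwise comparison between the left derivative of the continuous rank function $\phi_T(x):=h^0_a(X_{|T},L_x)$ and the derivative of the restricted volume $\psi_T(x):=\vol_{X|T}(L_x)$, which I then integrate from $-\infty$ to $0$. For the base case $m=1$, write $C:=T$, so that $\vol_{X|C}(L)=\deg(L_{|C})$. For $d\gg0$, Theorem~\ref{thm:factorization} says that for general $\alpha$ the moving part of $|L\ud\otimes\alpha|_{|C\ud}$ is a base-point-free $g^r_{d'}$ with $r+1=h^0_{a_d}(X\ud_{|C\ud},L\ud)=d^{2q}h^0_a(X_{|C},L)$ (Proposition~\ref{prop: mult-rank}) inducing a degree-$m_{L|C}$ map onto a nondegenerate curve $\Gamma\subseteq\pp^r$. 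Since a nondegenerate curve in $\pp^r$ has degree $\ge r$ and $\deg(L\ud_{|C\ud})\ge m_{L|C}\deg\Gamma\ge m_{L|C}\,r$, dividing by $d^{2q}$ and letting $d\to\infty$ kills the $-1$ and yields $\deg(L_{|C})\ge m_{L|C}\,h^0_a(X_{|C},L)$.

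For the inductive step I fix a general $M\in|M|$ and set $T_M:=T\cap M$, a smooth $a$-general subvariety of dimension $m-1$ (Bertini together with \cite{FL}), for which the theorem holds by induction. The heart of the argument is the pointwise inequality
\begin{equation*}
\psi_T'(x)\ \ge\ m_{L_x|T}\,m!\,D^-\phi_T(x)\qquad\text{for a.e. }x.
\end{equation*}
To obtain it I would mirror, in the relative setting, the computation valid for $T=X$: on one side, Lemma~\ref{lem: Delta} gives $D^-\phi_T(x)=\lim_{d}d^{-(2q-2)}h^0_{a_d}(X\ud_{|T\ud\cap M_d},(L_x)\ud)$ (for $T=X$ this is exactly Theorem~\ref{thm: main-rank}(2); for general $T$ one decomposes $\phi_T$ as a difference of two rank functions as in Remark~\ref{rem: difference} and uses a restriction sequence on $M_d$). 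On the other side, combining the derivative of the restricted volume with the base-change relation $M\up{d}\equiv d^2M_d$ yields $\vol_{X\ud|T\ud\cap M_d}((L_x)\ud)=\tfrac1m d^{2q-2}\psi_T'(x)$. Applying the inductive hypothesis to $T\ud\cap M_d\subseteq X\ud$, together with the invariance of the eventual degree under a general section, $m_{(L_x)\ud|T\ud\cap M_d}=m_{L_x|T}$ (which I deduce from Proposition~\ref{prop: composto involuzione} applied to the eventual map), gives
\begin{equation*}
\tfrac1m d^{2q-2}\psi_T'(x)=\vol_{X\ud|T\ud\cap M_d}((L_x)\ud)\ \ge\ m_{L_x|T}\,(m-1)!\,h^0_{a_d}(X\ud_{|T\ud\cap M_d},(L_x)\ud),
\end{equation*}
and dividing by $d^{2q-2}$, letting $d\to\infty$, produces the displayed pointwise inequality.

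For the integration, both $\phi_T$ and $\psi_T$ are continuous, non-decreasing, vanish for $x\ll0$ (where $L_x$ is not big), and are absolutely continuous (as differences of convex functions, Lemma~\ref{lem: convex}, resp. by continuity of the restricted volume, Proposition~\ref{lem: derivata volume}), so $\phi_T(0)=\int_{-\infty}^0D^-\phi_T$ and $\psi_T(0)=\int_{-\infty}^0\psi_T'$. Since $x\mapsto m_{L_x|T}$ is non-increasing (\S\ref{ssec: R-eventual degree}), for $x\le0$ one has $m_{L_x|T}\ge m_{L|T}$, and since $D^-\phi_T\ge0$,
\begin{equation*}
\vol_{X|T}(L)=\int_{-\infty}^0\psi_T'\,dx\ \ge\ m!\int_{-\infty}^0 m_{L_x|T}\,D^-\phi_T\,dx\ \ge\ m_{L|T}\,m!\int_{-\infty}^0D^-\phi_T\,dx=m_{L|T}\,m!\,h^0_a(X_{|T},L),
\end{equation*}
which is exactly $\lambda_T(L)\ge m_{L|T}\,m!$.

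The essential difficulty is the pointwise inequality, and within it the two relative ingredients: first, extending the derivative formula of Theorem~\ref{thm: main-rank}(2) to the restricted rank $\phi_T$ and matching it with the corresponding derivative of the restricted volume $\psi_T$, so that the two limits in $d$ are taken over the same subvariety $T\ud\cap M_d$; and second, showing that a general section $T_M$ of the eventual map carries the same eventual degree, $m_{L_x|T_M}=m_{L_x|T}$, for which Proposition~\ref{prop: composto involuzione} is the key input. The integration step, by contrast, is routine once the monotonicity of $m_{L_x|T}$ and the absolute continuity of $\phi_T,\psi_T$ are in hand.
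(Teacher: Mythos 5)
Your route is genuinely different from the paper's, but as written it has a real gap in the inductive step: both of your key pointwise formulas are ``relative'' derivative statements that the paper establishes only in the absolute case $T=X$, and the one for the rank goes in the wrong direction. Writing $\phi_T=\phi-\wt\phi$ as a difference of two absolute rank functions (Remark \ref{rem: difference}, after blowing up so that $T$ becomes a Cartier divisor), Theorem \ref{thm: main-rank}(ii) gives
\[
D^-\phi_T(x)=\lim_{d\to\infty}\frac{1}{d^{2q-2}}\Bigl[h^0_{a_d}\bigl(X\ud_{|M_d},(L_x)\ud\bigr)-h^0_{a_d}\bigl(X\ud_{|M_d},(L_x)\ud-T\ud\bigr)\Bigr],
\]
and since a section of $(L_x)\ud$ vanishing on $T\ud$ restricts on $M_d$ to one vanishing on $T\ud\cap M_d$ (but not conversely), the bracket is bounded \emph{below} by $h^0_{a_d}(X\ud_{|T\ud\cap M_d},(L_x)\ud)$. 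So what follows from the paper's results is $D^-\phi_T(x)\ge\lim_d d^{-(2q-2)}h^0_{a_d}(X\ud_{|T\ud\cap M_d},(L_x)\ud)$, whereas your chain ending in $\psi_T'(x)\ge m!\,m_{L_x|T}\,D^-\phi_T(x)$ needs exactly the reverse inequality, which is not established and would require a new lifting argument (sections on $M_d$ vanishing on $T\ud\cap M_d$ coming, asymptotically, from sections on $X\ud$ vanishing on $T\ud$). Likewise, the identity $\psi_T'(x)=m\,\vol_{X|T\cap M}(L_x)$ is the Boucksom--Favre--Jonsson theorem (\cite{BFJ}), available only for the full volume (Proposition \ref{lem: derivata volume}(i)); for $T\subsetneq X$ the paper proves only continuity of $\psi_T$ (Proposition \ref{lem: derivata volume}(ii)), so neither your derivative formula nor the absolute continuity needed for $\psi_T(0)=\int_{-\infty}^0\psi_T'\,dx$ is available. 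This is precisely why the paper's inductive proofs (of Theorems \ref{teo: Clifford-Severi1} and \ref{teo: Clifford-Severi2}) first reduce to the absolute case and only ever differentiate $\phi_X$ and $\psi_X$, the relative statement entering solely as the inductive hypothesis applied to the pair $(X\ud,M_d)$.

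The paper's actual proof of the present theorem is non-inductive and avoids all of this: after the reduction of \S\ref{ssec: continuous-res} one may assume that $|L|_{|T}$ induces the eventual map $\fie\colon T\to Z$, a morphism of degree $m_{L|T}$ with $P_{|T}=\fie^*N$; then $\vol_{X|T}(L)\ge\vol_T(P_{|T})=m_{L|T}\,\vol_{\wt Z}(N)$ by multiplicativity of the volume under generically finite morphisms (\cite[Lemma 3.3.6]{holschbach}), and Theorem \ref{teo: Clifford-Severi1} applied on a smooth model $\wt Z$ gives $\vol_{\wt Z}(N)\ge m!\,h^0_{\wt a}(\wt Z,N)\ge m!\,h^0_a(X_{|T},L)$. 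Two further slips in your write-up: in the base case the equality $\vol_{X|C}(L)=\deg(L_{|C})$ is false in general (only $\le$ holds), so you must first pass to the continuous moving part $P$, where $\vol_{X|C}(L)\ge\vol_{X|C}(P)=\deg(P_{|C})$ by \cite[Cor.~2.17]{elmnp}, before running your (otherwise fine) nondegenerate-curve estimate; and Proposition \ref{prop: composto involuzione} compares Galois groups, not degrees --- the correct reason the eventual degree does not drop on $T\ud\cap M_d$ is that $M_d\in a_d^*|H|$ is pulled back from the image of the eventual map, because $a$ factors through $\fie$.
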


\begin{thm}[Clifford-Severi inequalities 3]\label{teo: Clifford-Severi2}
Assume that   Hypotheses \ref{hyp: sezzione 6} hold,  that $m=\dim T \geq 2$ and that $K_T-L_{|T}$ is pseudoeffective.  Then
\begin{enumerate}
\item If $\deg a_{|T} =1$, then $\lambda_T(L)\ge \frac{5}{2} m!$.
\item If $T$ is of general type and $a_{|T}$ is not composed with an involution, then
$$\lambda_T(L)\ge \frac{9}{4}m!.$$
\end{enumerate}
\end{thm}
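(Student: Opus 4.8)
The plan is to prove both inequalities by induction on $m=\dim T$, with the entire gain over Theorem \ref{teo: Clifford-Severi1} concentrated in the base case $m=2$, where the constants $\tfrac52$ and $\tfrac94$ come from the genuinely two--dimensional input of \S\ref{sec: castelnuovo revisited}. As in Theorems \ref{teo: Clifford-Severi1} and \ref{teo: Clifford-Severinuovo}, the engine is the comparison along the line $L_x=L+xM$ of the functions $\phi_T(x)=h^0_a(X_{|T},L_x)$ and $\psi_T(x)=\vol_{X|T}(L_x)$ of \S\ref{sec: rank}. We may assume $h^0_a(X_{|T},L)>0$ (otherwise $\lambda_T(L)=+\infty$). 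Using Remark \ref{rem: difference} to reduce $T$ to a Cartier divisor, I treat $\phi_T$ as a difference of the convex functions of Lemma \ref{lem: convex}; the restricted forms of Theorem \ref{thm: main-rank}(ii) and of Proposition \ref{lem: derivata volume} then give, for $x>\bar x$, the identity $\psi_T'(x)=m\,\vol_{X|T'}(L_x)$ with $T'=T\cap M$ a general section of dimension $m-1$, together with $D^-\phi_T(x)=\lim_{d\to\infty}\tfrac{1}{d^{2q-2}}h^0_{a_d}(X\up{d}_{|S_d},(L_x)\up{d})$, where $S_d$ is a general member of $|a_d^*H|$ cut on $T\up{d}$. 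Throughout I use that $\lambda_T(L)$ is invariant under the base changes $\wt{\mu_d}$ (Remark \ref{rem: vol-mult}), which lets me pass freely to $X\up{d}$.

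For the inductive step ($m\ge 3$) I would apply the statement in dimension $m-1$ to $(X\up{d},S_d,(L_x)\up{d})$, whose data again satisfy Hypotheses \ref{hyp: sezzione 6}: by adjunction $K_{S_d}-(L_x)\up{d}_{|S_d}$ is pseudoeffective for $x\le 0$; $S_d$ is of general type; and $a_{d|S_d}$ has degree $1$ (resp.\ is not composed with an involution) whenever $a_{|T}$ is, the involution condition being transported by the isomorphism of Galois groups of Proposition \ref{prop: composto involuzione} applied to $a_{|T}\colon T\to a(T)$. This gives $\vol_{X\up{d}|S_d}((L_x)\up{d})\ge C\,(m-1)!\,h^0_{a_d}(X\up{d}_{|S_d},(L_x)\up{d})$ with $C\in\{\tfrac52,\tfrac94\}$; dividing by $d^{2q-2}$ and letting $d\to\infty$ yields the pointwise bound $\vol_{X|T'}(L_x)\ge C\,(m-1)!\,D^-\phi_T(x)$. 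Since $\psi_T'(x)=m\,\vol_{X|T'}(L_x)$, integrating over $(-\infty,0]$ and using $\int_{-\infty}^{0}\psi_T'=\psi_T(0)=\vol_{X|T}(L)$ and $\int_{-\infty}^{0}D^-\phi_T=\phi_T(0)=h^0_a(X_{|T},L)$ produces $\vol_{X|T}(L)\ge C\,m!\,h^0_a(X_{|T},L)$: the constant is preserved and the step closes.

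It remains to settle the base case $m=2$, i.e.\ $\lambda(S,L)\ge 5$ in case (1) and $\lambda(S,L)\ge\tfrac92$ in case (2). For (1), with $\deg a_{|S}=1$ and $L$ subcanonical, this is the continuous, arbitrary--$L$ version of Corollary \ref{cor: severi-3fold}: the naive integration of the curve inequalities only yields $\lambda\ge 4$, so I would instead feed the finite--level estimate $h^0_a(S,2L)\ge 6\,h^0_a(S,L)$ of Theorem \ref{teo: higher Castelnuovo}(2) into the continuous rank function, combining it with the convexity of $\phi$ and the asymptotics $h^0_a(S,kL)\sim\tfrac{k^2}{2}\vol_S(L)$ to recover the extra factor. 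For (2), with $a_{|S}$ not composed with an involution and $S$ of general type, Theorem \ref{teo: Clifford-Severinuovo} lets me reduce to $m_{L|S}\le 2$, and Corollary \ref{cor:birat}(2) then forces $m_{L|S}=1$; here the improvement is two--dimensional. By Proposition \ref{prop: noncomposto} the extended eventual degree $x\mapsto m_{(L_x)|S}$ cannot take the value $2$ on an interval, so, being non--increasing, it equals $1$ near $x=0$ and is $\ge 3$ for $x$ small; weighting the curve Clifford--Severi inequality on the sections $S_d$ by this $x$--dependent degree and integrating gives the constant $\tfrac94$ instead of $2$.

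The hard part is precisely this base case. Everything in Theorem \ref{teo: Clifford-Severi1} already follows from integrating one--dimensional (asymptotic) Clifford--Severi, which saturates at $\lambda=4$ in both situations; the surplus to $5$ and to $\tfrac92$ must therefore come from strictly two--dimensional phenomena — the finite--$k$ Castelnuovo bound of Theorem \ref{teo: higher Castelnuovo} in case (1), and the rigidity of the eventual degree across the value $2$ (Proposition \ref{prop: noncomposto}) in case (2) — and packaging these into a clean slope estimate is the delicate point. The remaining checks are routine: that $L_x$ stays subcanonical on the integration range (automatic since $L_x\le L$ there), that the geometric hypotheses survive cutting by $|a_d^*H|$, and that the $d^{2q-2}$ normalizations of the restricted volume and the restricted continuous rank agree in the limit.
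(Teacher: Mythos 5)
Your global architecture (induction on $m$ exactly as in Theorem \ref{teo: Clifford-Severi1}, with all of the gain over that theorem concentrated in a two--dimensional base case) is the paper's, and your inductive step is essentially the paper's proof. The genuine gap is in the base case $m=2$ (the paper's Proposition \ref{lem: 2-previa a teorema C-S}), where both mechanisms you propose fall short of the stated constants. For case (1), the combination you describe --- the finite-level bound $h^0_a(S,2L)\ge 6\,h^0_a(S,L)$ of Theorem \ref{teo: higher Castelnuovo}(ii), convexity, and the asymptotics $h^0_a(S,kL)\sim\tfrac{k^2}{2}\vol_S(L)$ --- cannot yield $5$: you cannot iterate the factor $6$ (once you pass to $2L$, subcanonicity is lost and Theorem \ref{teo: higher Castelnuovo}(ii) no longer applies), and feeding the Castelnuovo bound $h^0_a(S,kL)\ge(2k-1)k\,h^0_a(S,L)$ into the asymptotics gives only $\vol_S(L)\ge\lim_k\tfrac{2(2k-1)}{k}\,h^0_a(S,L)=4\,h^0_a(S,L)$. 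The missing idea is to restrict to a general curve $C\in|L|$ itself, not to a section of $|M|$: pseudoeffectivity of $K_S-L$ gives $\deg(2L_{|C})=2L^2\le L(K_S+L)=\deg K_C$, so $2L_{|C}$ is in the Clifford range and Lemma \ref{lem: 1-previa a teorema C-S}(i) plus the restriction sequence yield
$$L^2=\tfrac12\deg(2L_{|C})\ \ge\ h^0_a(C,2L_{|C})\ \ge\ h^0_a(S,2L)-h^0_a(S,L)\ \ge\ 5\,h^0_a(S,L),$$
where the factor-$6$ estimate enters only in the last step. This converts the multiplication bound into a volume bound with an additive loss of one $h^0_a(S,L)$, instead of the multiplicative loss that caps your asymptotic route at $4$.

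For case (2) a single weighted integration is likewise insufficient. With $\bar x$ the threshold below which $m_{L_x}\ge 3$ and above which $m_{L_x}=1$ (Proposition \ref{prop: noncomposto}), integrating $\psi'(x)=2\,\vol_{S|M}(L_x)$ against the weights $6$ and $4$ gives only
$$L^2\ \ge\ 4\,h^0_a(S,L)+2\,h^0_a(S,L_{\bar x}),$$
which degenerates to $4\,h^0_a(S,L)$ whenever $\bar x$ lies at or below the vanishing threshold of $h^0_a$ --- a case your argument cannot exclude, and indeed the most natural one after your reduction to $m_{L|S}=1$ via Corollary \ref{cor:birat}(2). The paper's point (this is why it advertises ``the sum of two integrals'' as the replacement for Xiao's method) is to pair this with a second, independent inequality of Castelnuovo type, obtained again by restricting to $C\in|L|$: splitting the integration for $\phi_2(x)=h^0_a(S,(2L)_x)$ at $\bar x$, with weights $2$ and $3$ from Proposition \ref{prop: continuous castelnuovo per curve}(i),(ii), gives $h^0_a(S,2L)\ge 6\,h^0_a(S,L)-2\,h^0_a(S,L_{\bar x})$, whence $L^2\ge 5\,h^0_a(S,L)-2\,h^0_a(S,L_{\bar x})$. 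The two bounds carry the unknown $h^0_a(S,L_{\bar x})$ with opposite signs; adding them cancels it and yields $2L^2\ge 9\,h^0_a(S,L)$. Without this second inequality and the cancellation, no choice of weights in a single integral can produce $\tfrac92$, since the weight on the region $x>\bar x$ can never exceed $4$.
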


\begin{rem} In the absolute case ($X=T$), the condition that  $a$ be strongly generating is not necessary and it is enough to ask that $X$ be of maximal $a$-dimension (cf. Remark \ref{rem: strongly}).

The inequality in Theorem \ref{teo: Clifford-Severi1} (i) is the generalization of the Main Theorem in \cite{barja-severi} for  restricted volume and continuous rank. The inequality in Theorem \ref{teo: Clifford-Severi2} (ii) is a generalization of an inequality given by Lu and Zuo for $n=m=2$  and $L=K_X$ in \cite{LZ2}.
The approach of  Lu and Zuo is based on the  analysis of the (second) relative Noether multiplication map on a fibered surface; their thecnique -similarly to what happens for Xiao's method- is replaced here by the operation of taking  the sum of two integrals.
\end{rem}

\begin{rem} In the proof of Main Theorem in \cite{barja-severi}, it is proven how to reduce the case of non-maximal $a$-dimension to the maximal one under certain hypothesis. With the same argument, we can strengthen Theorem \ref{teo: Clifford-Severi1} (i) to the more general inequality
$$\lambda_T(L)\ge \delta(r) k!$$
\noindent where $k$ is the $a$-dimension of $X$, provided the continuous moving divisor $P$ of $L$ is $a$-big.
\end{rem}
\begin{comment}
\mai{Questo remark e quello piu' facile da dire e non si deve dimostrare niente. Per guardare se gli altri apartati o il Clifford-Severi 3 si puo' stendere al caso di a-dimensione non massima, si dovrebbe studiare bene se le ipotesi degli teoremi si mantengono bene nel proceso di riduzzione al caso a-massimo. Avevamo detto de non complicar-lo, allora propongo lasciare soltanto questo Remark}
\rp{ok}
\end{comment}
For the case of the canonical line bundle we can even extend this result to the (singular) minimal setting and obtain:

\begin{cor}\label{Cor: Severi Canonico}
Let $X$ be a complex projective minimal $\mathbb{Q}$-Gorenstein variety of dimension $n\ge 2$,  let $\alb_X\colon X\rightarrow \Alb(X)$  be the Albanese map and let $\omega_X={\mathcal O}_X(K_X)$ be the canonical  sheaf. If $X$ is of maximal Albanese dimension, then
\begin{enumerate}
\item $K_X^n\geq 2\,n!\,\chi(\omega_{X})$.
\item If $\deg \alb_X=1$, then $K_X^n\geq \frac{5}{2}\,n!\,\chi(\omega_{X})$.
\item If $\alb_X$ is not composed with an involution, then $K_X^n\geq \frac{9}{4}\,n!\,\chi(\omega_{X})$.\end{enumerate}
\end{cor}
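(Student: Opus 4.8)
The plan is to deduce all three inequalities from the Clifford--Severi inequalities of \S\ref{sec: C-S revisited} applied to a smooth model of $X$, taking $T=X$, $L=K_X$ and $a=\alb_X$. Since $X$ is minimal, $K_X$ is nef, so $K_X^n=\vol_X(K_X)\ge 0$; hence if $\chi(\omega_X)=0$ all three inequalities hold trivially, and from now on I may assume $\chi(\omega_X)>0$.

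The key preparatory step is the passage to a resolution $\sigma\colon\tilde X\to X$, carefully checking that both the numerator and the denominator of the slope are preserved. Because $X$ is minimal and $\mathbb{Q}$-Gorenstein it has canonical singularities, so writing $K_{\tilde X}=\sigma^*K_X+E$ with $E\ge 0$ effective and $\sigma$-exceptional, one has $\sigma_*\omega_{\tilde X}=\omega_X$ and, by Grauert--Riemenschneider, $R^i\sigma_*\omega_{\tilde X}=0$ for $i>0$; this gives $\chi(\omega_{\tilde X})=\chi(\omega_X)$ and $h^0(\tilde X,mK_{\tilde X})=h^0(X,mK_X)$ for all $m$, whence $\vol_{\tilde X}(K_{\tilde X})=\vol_X(K_X)=K_X^n$ (the last equality since $K_X$ is nef). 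As canonical singularities are rational we have $\Alb(\tilde X)=\Alb(X)$ and $\alb_{\tilde X}=\alb_X\circ\sigma$; in particular $\tilde X$ has maximal Albanese dimension, $\deg\alb_{\tilde X}=\deg\alb_X$, and $\alb_{\tilde X}$ is composed with an involution if and only if $\alb_X$ is. Finally, by generic vanishing (see the remark in \S\ref{ssec: rank}) $h^0_{\alb}(\tilde X,K_{\tilde X})=\chi(\omega_{\tilde X})=\chi(\omega_X)>0$, so by Proposition \ref{prop: big} $K_{\tilde X}$ is big and $\tilde X$ is of general type. Combining these,
$$\lambda(\tilde X)=\frac{\vol_{\tilde X}(K_{\tilde X})}{h^0_{\alb}(\tilde X,K_{\tilde X})}=\frac{K_X^n}{\chi(\omega_X)}.$$

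Next I would check that $(\tilde X,\,T=\tilde X,\,L=K_{\tilde X})$ satisfies Hypotheses \ref{hyp: sezzione 6}: $\tilde X$ is smooth of maximal $\alb$-dimension, $T=\tilde X$ is trivially $a$-general, and since $h^0_{\alb}(\tilde X,K_{\tilde X})>0$ Lemma \ref{lem: B+} gives $\tilde X\not\subseteq{\bf B}_+(K_{\tilde X})$. Moreover $L=K_{\tilde X}$ is numerically $1$-subcanonical (so $r=1$), and $K_{\tilde X}-L=0$ is pseudoeffective. Then: for (1), Theorem \ref{teo: Clifford-Severi1}(i) with $\delta(1)=2$ gives $\lambda(\tilde X)\ge 2\,n!$, valid for all $n\ge 1$; for (2), when $\deg\alb_X=1$ (and $n\ge 2$), Theorem \ref{teo: Clifford-Severi2}(1) gives $\lambda(\tilde X)\ge\tfrac{5}{2}\,n!$; for (3), when $\alb_X$ is not composed with an involution (and $n\ge 2$), Theorem \ref{teo: Clifford-Severi2}(2), using that $\tilde X$ is of general type, gives $\lambda(\tilde X)\ge\tfrac{9}{4}\,n!$. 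Substituting the formula for $\lambda(\tilde X)$ yields the three claimed inequalities.

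I expect the main obstacle to be precisely the singular-to-smooth reduction rather than the application of the Clifford--Severi bounds, which is formal once $\tilde X$ is in place. The delicate points are the invariance of $\chi(\omega_X)$ and of the plurigenera (hence of the volume) under resolution, which relies on $X$ having canonical singularities together with Grauert--Riemenschneider vanishing, and the verification that the Albanese map, its degree, and the property of being composed with an involution all descend correctly between $X$ and $\tilde X$. One should also keep track of the dimension restriction $n\ge 2$ that parts (2) and (3) inherit from Theorem \ref{teo: Clifford-Severi2}.
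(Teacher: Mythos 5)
Your proposal is correct and follows essentially the same route as the paper's own proof: pass to a desingularization, use that the singularities are rational (so that $\chi(\omega_X)$, the continuous rank, and $\vol(K_X)=K_X^n$ are all preserved, and the Albanese data descend), and then apply the Clifford--Severi theorems of \S\ref{sec: C-S revisited}. The only divergence is part (i), which the paper simply quotes from Corollary B of \cite{barja-severi} while you rederive it internally from Theorem \ref{teo: Clifford-Severi1}(i) with $r=1$; your more detailed treatment of the singular-to-smooth reduction and your remark that parts (ii) and (iii) inherit the restriction $n\ge 2$ from Theorem \ref{teo: Clifford-Severi2} (indeed both fail for curves of genus $\ge 2$) simply make explicit what the paper's terse proof leaves implicit.
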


\begin{proof}
 The content of (i) is just  Corollary B of  \cite{barja-severi}. Consider a desingularization $\sigma\colon X'\rightarrow X$. Then we have $\vol_{X'}(K_{X'})=\vol_X(K_X)=K_X^n$. Since the singularities of $X$ are rational and $\omega_X$ is the dualizing sheaf of $X$, we also have that $h^0_a(X',K_{X'})=\chi(\omega_{X'})=\chi(\omega_X)$. Then we can apply
Theorem \ref{teo: Clifford-Severi2}.
\end{proof}

Theorem \ref{teo: Clifford-Severinuovo} is a direct consequence of the properties of the eventual factorization (cf. \S \ref{sec: eventual}) and of Theorem \ref{teo: Clifford-Severi1}, and we prove it first.
\begin{proof}[Proof of Theorem \ref{teo: Clifford-Severinuovo}]
Since the claim is trivial if $h^0_a(X_{|T},L)=0$, from now on  we  assume   $h^0_a(X_{|T},L)>0$.

By  Remark \ref{rem: vol-mult} the slope $\lambda_T(L)$ does not change if:
\begin{itemize}
\item[--]  we  replace   $X$ by a smooth modification such that $T$ is not contained in the exceptional locus of the inverse  map and   we replace  $T$ by its strict transform;
\item[--]  we replace $(X,T)$ with $(X\up{d},T\ud)$ for some   $d\gg 0$.
\end{itemize}
   Hence (cf. \S \ref{ssec: continuous-res}) we may assume   that:
\begin{itemize}
\item $L=P+D$ is the decomposition of $L$ as the sum  of  the continuous moving and fixed part of $L$;  so $|P\otimes\alpha|$ is base point free for all $\alpha\in\Pic^0(A)$  and  $|P\otimes\alpha|$ is the moving part of $|L\otimes\alpha|$ for $\alpha$ general;
\item $T$ is not contained in the support of  $D$;
\item $m_{L|T}(1)=m_{L|T}$ (recall that $m_{L|T}<+\infty$ by Proposition \ref{prop: big});
\item up to twisting by a general  element of $\pic^0(A)$, we may assume that  $h^0(X_{|T},L)=h^0_a(X_{|T},L)=h^0(X_{|T},P)$ and  that $|L|_{|T}$ induces the eventual map (cf. Def. \ref{def:ev-map}) $\fie \colon T\to Z$, which is a morphism of degree $m_{L|T}$.
\end{itemize}
{\bf Caution:}   after these reduction steps  the pair $(X,T)$ still satisfies  Hypotheses \ref{hyp: sezzione 6}, except for the fact  that $T$ may not be smooth. We are going to bypass this problem by working on a desingularization of $T$.
\smallskip

Observe that $T$ is not contained in ${\bf B}_+(P)$ by Lemma \ref{lem: B+},  so by  \cite[Cor.~2.17]{elmnp} we have
 $$ \vol_{X|T}(P)=P^mT=\vol_T(P_{|T}).$$

Now we replace $Z$ by a smooth model $\wt Z$ and $T$ by a smooth model $\wt T$ such that the induced map $\wt T\to \wt Z$ is a morphism.  We abuse notation and we denote by the same letter line bundles on $T$, $Z$ and their pullbacks to  $\wt T$, $\wt Z$.
Since $T$ is not contained in the support of $D$ and since  the volume of a line bundle is invariant under pull back via a birational morphism \cite[Prop.2.2.43]{lazarsfeld-I}, we have
 $$
 \vol_{X|T}(L_{|T})\geq \vol_T(P_{|T})=\vol_{\wt T}(P_{|T})=
   m_{L|T}\vol_{\wt Z}(N), $$
   where   $N$ is  a line bundle on $Z$ such that $P_{|T}=\fie^*(N)$
 and the last inequality holds  by  \cite[Lemma 3.3.6]{holschbach}.

    Finally, $N$ pulls back to a nef line bundle  on  $\wt Z$, so by Theorem \ref{teo: Clifford-Severi1}, we have:
   $$ \vol_{\wt Z}(N)\ge  m! h^0_{\wt a}(\wt Z, N)$$
   where we denote by $\wt a\colon \wt Z\to A$ the induced map.
   Since $h^0_{\wt a}(\wt Z, N)\ge h^0_a(X_{|T},P)=h^0_a(X_{|T},L)$, the claimed inequality follows from the ones above.
 \end{proof}
Theorems \ref{teo: Clifford-Severi1} and \ref{teo: Clifford-Severi2} are  proven by induction, using  results on  linear  systems on  curves and surfaces as the first step.
\begin{lem}\label{lem: 1-previa a teorema C-S}
Let $C$ be a smooth  curve of genus $g(C)\ge 2$ and let  $a\colon C \rightarrow A$ be  a strongly generating map to an abelian variety of dimension $q>0$. Let $L$ be a line bundle of positive degree on $C$. Write $s:=r(L)=\frac{\deg L}{2g(C)-2}$. Then:
\begin{enumerate}
\item $\lambda(L)\geq \delta(s).$
\item Assume that $m_L\neq 2$.
Then
$$
\lambda(L) \geq {\overline \delta}(s).
$$
    \end{enumerate}
\end{lem}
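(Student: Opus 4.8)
The plan is to compute $\lambda(L)=\deg L/h^0_a(C,L)$ (recall that $\vol(L)=\deg L$ for a curve) by separating the ranges $s\le 1$ and $s\ge 1$, and, in part (ii), by splitting according to whether the eventual degree is $m_L=1$ or $m_L\ge 3$. I may assume $h^0_a(C,L)>0$, since otherwise $\lambda(L)=+\infty$ and there is nothing to prove. By Remark \ref{rem: vol-mult} and Remark \ref{rem: subcan}, both $\lambda(L)$ and $s=r(L)$ are unchanged when $(C,a,L)$ is replaced by a connected \'etale cover $(C^{(d)},a_d,L^{(d)})$, while the multiplicativity of the continuous rank (Proposition \ref{prop: mult-rank}) gives $h^0_{a_d}(C^{(d)},L^{(d)})=d^{2q}h^0_a(C,L)$; these facts underlie all the limiting arguments below.

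First I would dispose of part (i), and of part (ii) in the range $s\ge 1$, neither of which uses $m_L$. If $s\le 1$, i.e. $0\le\deg L\le 2g-2$, then on $C^{(d)}$ one has $0\le\deg L^{(d)}\le 2g(C^{(d)})-2$, so the classical Clifford inequality applied to $L^{(d)}\otimes\alpha$ for general $\alpha$ gives $h^0_{a_d}(C^{(d)},L^{(d)})\le\tfrac12\deg L^{(d)}+1$; dividing by $d^{2q}$ and letting $d\to\infty$ yields $h^0_a(C,L)\le\tfrac12\deg L$, i.e. $\lambda(L)\ge 2=\delta(s)$. If $s\ge 1$, i.e. $\deg L\ge 2g-2$, then $\deg(K_C-L)\le 0$, so for general $\alpha$ one has $h^1(C,L\otimes\alpha)=0$ and hence $h^0_a(C,L)=\deg L-g+1$; since $\deg L=2s(g-1)$, this gives $\lambda(L)=\frac{2s}{2s-1}=\delta(s)=\overline{\delta}(s)$. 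This proves (i) and settles (ii) for $s\ge 1$.

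It remains to prove (ii) for $s\le 1$, assuming $m_L\ne 2$. Suppose first $m_L\ge 3$; here I would factor through the eventual map. Replacing $L$ by its continuous moving part (which does not increase $\deg L$, hence $\lambda(L)$, and leaves $h^0_a(C,L)$ and $m_L$ unchanged) and passing to a cover on which $|L\otimes\alpha|$ realizes the eventual map, Theorem \ref{thm:factorization} produces a generically finite morphism $\fie\colon C\to Z$ of degree $m_L$ together with a line bundle $N$ on $Z$ such that $L=\fie^*N$; the induced map $\tilde a\colon Z\to A$ is again strongly generating. Then $\deg L=m_L\deg N$ and $H^0(C,L\otimes\alpha)=\fie^*H^0(Z,N\otimes\beta)$ for the corresponding $\beta$, so $h^0_a(C,L)=h^0_{\tilde a}(Z,N)$ and therefore $\lambda(L)=m_L\,\lambda_Z(N)$. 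Applying part (i) to $(Z,\tilde a,N)$ (or a direct computation when $g(Z)=1$) gives $\lambda_Z(N)\ge 1$, whence $\lambda(L)\ge m_L\ge 3\ge\overline{\delta}(s)$, as $\overline{\delta}\le 3$ everywhere.

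The crux is the birational case $m_L=1$ with $s\le 1$, which is where I expect the real work. Reducing as above to $L$ continuously base point free (this only decreases $s$ and $\lambda(L)$, and $\overline{\delta}$ is non-increasing, so it suffices to treat this case), I may assume that $W_d:=H^0_{a_d}(C^{(d)},L^{(d)}\otimes\alpha)$ is a base point free, birational $g^{r_d}_{D_d}$ with $r_d=d^{2q}h^0_a(C,L)-1$ and $D_d=d^{2q}\deg L$, so that Castelnuovo's inequality \eqref{eq: castelnuovo} applies on each $C^{(d)}$. Bounding the rank of the $k$-th multiplication map above by $h^0(C^{(d)},kL^{(d)}\otimes\alpha^{k})=d^{2q}(k\deg L-(g-1))$ (valid once $k\deg L>2g-2$), dividing \eqref{eq: castelnuovo} by $d^{2q}$ and letting $d\to\infty$ --- so that $\sigma_d:=\lfloor(D_d-1)/(r_d-1)\rfloor\to\sigma:=\lfloor\lambda(L)\rfloor$ --- the terms linear in $k$ cancel, leaving
\begin{equation*}
\sigma\,\deg L-\tfrac12\,h^0_a(C,L)\,\sigma(\sigma+1)\ge g-1 .
\end{equation*}
Writing $t:=\lambda(L)$ and using $\deg L=2s(g-1)$ and $h^0_a(C,L)=\deg L/t$, this becomes $2s\sigma-\frac{s\sigma(\sigma+1)}{t}\ge 1$ with $\sigma=\lfloor t\rfloor$. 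Since part (i) gives $t\ge 2$, we have $\sigma\ge 2$. If $\sigma\ge 3$ then $t\ge 3\ge\overline{\delta}(s)$; if $\sigma=2$ the inequality reads $4s-\tfrac{6s}{t}\ge 1$, which for $\tfrac12\le s\le 1$ is equivalent to $t\ge\frac{6s}{4s-1}=\overline{\delta}(s)$, while for $s<\tfrac12$ it is incompatible with $t<3$ (as then $4s-\tfrac{6s}{t}<2s<1$), forcing $\sigma\ge 3$ and $t\ge 3=\overline{\delta}(s)$. In every case $t\ge\overline{\delta}(s)$. The main obstacle is precisely this last step: controlling the floor $\sigma_d$ in the limit $d\to\infty$, especially at the integer boundary values of $t$, and justifying the passage to the limit; I expect to handle the integrality issue by a small perturbation of $L$ together with the continuity of the rank function established in Theorem \ref{thm: main-rank}.
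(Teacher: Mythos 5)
Your proposal is correct, and for part (i) and the case $m_L\geq 3$ of part (ii) it essentially matches the paper: part (i) is word-for-word the paper's argument (classical Clifford on $C\up{d}$ plus a limit for $s\le 1$, Riemann--Roch for $s\ge 1$), and your direct factorization through the eventual map $\fie\colon C\to Z$ with $\lambda(L)=m_L\lambda_Z(N)\ge 3$ is exactly the content of Theorem \ref{teo: Clifford-Severinuovo}, which the paper simply cites at this point. Where you genuinely diverge is the core case $m_L=1$, $s\le 1$. The paper splits it in two: for $\tfrac12<s\le 1$ it combines Riemann--Roch for the non-special divisor $2L$ with the continuous Castelnuovo inequality $h^0_a(C,2L)\ge 3h^0_a(C,L)$ (Proposition \ref{prop: continuous castelnuovo per curve}(ii), i.e.\ inequality \eqref{eq: castelnuovo} filtered through $k=2$), while for $s\le\tfrac12$ it invokes the Clifford+ Theorem on $(C\up{d},L\up{d})$ and takes a limit. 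You instead run \eqref{eq: castelnuovo} directly on $C\up{d}$ for a single large $k$, cap the rank of $\rho_k$ by Riemann--Roch on $kL\up{d}$, and pass to the limit to get the quadratic inequality $2s\sigma-\tfrac{s\sigma(\sigma+1)}{\lambda(L)}\ge 1$ with $\sigma=\lfloor\lambda(L)\rfloor$; this treats both ranges of $s$ uniformly and avoids Clifford+ altogether, at the cost of a slightly more delicate limiting argument. The trade-off is reasonable: the paper's route recycles Proposition \ref{prop: continuous castelnuovo per curve}, which it needs anyway for the surface case, whereas your route is self-contained relative to \eqref{eq: castelnuovo} and in principle yields sharper information whenever $\lfloor\lambda(L)\rfloor$ is large. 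Finally, the obstacle you flag at the end is not actually one: since $\lambda(L)\ge 2$ forces $2D_d-r_d-1>0$, one checks that
\begin{equation*}
\frac{D_d-1}{r_d-1}>\frac{D_d}{r_d+1}=\lambda(L),
\end{equation*}
so the ratio tends to $\lambda(L)$ \emph{from above} and $\sigma_d=\lfloor\lambda(L)\rfloor$ exactly, for all $d\gg 0$ --- even when $\lambda(L)$ is an integer --- so no perturbation of $L$ or appeal to Theorem \ref{thm: main-rank} is needed.
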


\begin{proof}
The volume of a positive line bundle on a curve is just its degree and so $\lambda(L)=\frac{\deg L}{h^0_a(C,L)}$.
The results are trivially true if $h^0_a(C,L)=0$, so we  assume  $h^0_a(C,L)\neq 0$.

\noindent(i) If $s>1$, then $\vol_C(L)=\frac{2s}{2s-1}h^0(C,L)=\delta(s)h^0_a(C,L)$  by  Riemann-Roch.
If $s\leq 1$, then by Clifford's Theorem  applied  to $L^{(d)}$ on $C\up{d}$ we have
\begin{gather*}
d^{2q}\vol_C( L) = {\vol}_{C\up{d}}(L^{(d)}) \geq 2h^0(C\up{d},L^{(d)})-2\geq \\ \geq 2h^0_{a_d}(C\up{d},L^{(d)})-2=2d^{2q}h^0_a(C,L)-2.
\end{gather*}
Taking  the limit for  $d\to \infty$ we have the desired inequality.
\medskip

\noindent(ii) If $m_L\geq 3$, then  we have  ${\vol}_C(L)\geq 3 h^0_a(C,L)\ge \bar\delta(s)h^0_a(C,L)$  by Theorem \ref{teo: Clifford-Severinuovo}.

Otherwise, by assumption, we have that $m_L=1$.
Hence,  for $\alpha\in \Pic^0(A)$ general and $d\gg 0$  the line bundle $L^{(d)}\otimes \alpha$ induces   a birational map on $C\up{d}$ by Theorem \ref{thm:factorization}.
So, since $\lambda(L)=\lambda(L\ud)=\lambda(L\ud\otimes \alpha)$  (cf. Remarks \ref{rem: vol-mult} and \ref{rem: lambda-num}), we may assume that $|L|$ induces a birational map on $C$.

Assume  $\frac{1}{2}<s\leq 1$. In this case the divisor  $2L$ is non-special and  by Riemann-Roch and the continuous version of Castelnuovo's inequality (Proposition \ref{prop: continuous castelnuovo per curve})  we have:
\begin{align*}
2\vol_C(L)&=2\deg L =h^0_a(C,2L)+(g(C)-1)\geq 3 h^0_a(C,L)+\frac{\deg L}{2s}\end{align*}
 and the inequality $\deg L\geq \frac{6s}{4s-1}\, h^0_a(C,L)=\bar \delta(s) h^0_a(C,L)$ follows.

\medskip
If $s\leq \frac{1}{2}$ we  argue as in the proof of (i),  since if $m_L=1$ the inequality $\deg L\geq 3 h^0_a(C,L)$ holds by  applying the so-called Clifford+  Theorem \cite[III.3.~ex.B.7]{ACGH} to $(C\up{d},L^{(d)})$, and the results follows by taking the limit for $d\to +\infty$.
\end{proof}

\begin{proof}[Proof of  Theorem \ref{teo: Clifford-Severi1}]
\begin{comment}
(i) Arguing  as in the proof of Theorem \ref{teo: Clifford-Severinuovo}, one can reduce   to proving  the inequality   in the case that $|L\otimes \alpha|$ is free for all $\alpha \in \pic^0(A)$ and  the map given by $|L|$ is the eventual map, but at the price of replacing $T$ by a possibly singular variety.

As in the proof of \ref{teo: Clifford-Severinuovo}, we  get around this issue by considering a resolution $\wt T\to T$ and replacing $L_{|T}$ by its pull back to $\wt T$, which we denote by the same symbol, since:
\begin{itemize}
\item  we have $\lambda_{\wt T}(L _{|T})\le \lambda_T(L_{|T})$ since if we pull back a line bundle  via a  birational morphism the volume does not change (\cite[Prop.2.2.43]{lazarsfeld-I}), while  the continuous rank does not decrease (Remark \ref{rem: continuous rank non normale});
\item  the degree of subcanonicity on $\wt T$ of the pull back of $L_{|T}$ is the same as the degree $s$ of subcanonicity of $L_{|T}$ on the ``original'' $T$;
\item the degree of subcanonicity of the pull back of $P_{|T}$ to $\wt T$ is $\le s$.
\end{itemize}
\end{comment}

Since the claim is trivial for $h^0_a(X_{|T},L)=0$, we assume $h^0_a(X_{|T},L)>0$.
We observe that, as in the proof of Theorem \ref{teo: Clifford-Severinuovo}, we can make a reduction to the absolute case $X=T$. Indeed, by Remark \ref{rem: vol-mult}  we may assume that we have applied a blow-up and a base change by a multiplication map as in \S \ref{ssec: continuous-res}. Hence, the continuous decomposition $L=P+D$ verifies:

\begin{itemize}
\item $|L\otimes\alpha|=|P\otimes\alpha|+D$ for $\alpha\in\Pic^0(A)$ general;
\item $|P\otimes \alpha|$ is base point free for every  $\alpha\in\Pic^0(A)$;
\item $T$ is not contained in the support of $D$.
\end{itemize}
In this case we have that
$$
\vol_{X|T}(L)\geq \vol_{X|T}(P)=P^mT=\vol_T(P_{|T}),
$$
where the first equality follows from  Lemma \ref{lem: B+} and \cite[Cor.~2.17]{elmnp} since $h^0_a(X_{|T},L)>0$,   $T$ is not contained in $\Sigma$ and $|P|$ is base point free.  Observe also that by construction we have $h^0_a(X,L)=h^0_a(X,P)$ and, since $T$ is not contained in $D$,  we also have
$$
h^0_a(X_{|T},P)=h^0_a(X_{|T},L),
$$
so it is enough to prove the inequality $\lambda(P_{|T})\ge\delta(r) m!$ in case (i) and the inequality $\lambda(P_{|T})\ge\bar \delta(r) m!$ in case (ii).

Since we passed to a birational modification of $X$ and replaced $T$ by its strict transform, it is possible that the ``new'' $T$ is not smooth. We get around this issue by considering a resolution $\wt T\to T$ and replacing $P_{|T}$ by its pull back to $\wt T$, which we denote by the same symbol, since:
\begin{itemize}
\item  we have $\lambda_{\wt T}(P_{|T})\le \lambda_T(P_{|T})$ since if we pull back a line bundle  via a  birational morphism the volume does not change \cite[Prop.2.2.43]{lazarsfeld-I}, while  the continuous rank does not decrease (Remark \ref{rem: continuous rank non normale});
\item  the numerical degree of subcanonicity on $\wt T$ of the pull back of $L_{|T}$ is the same as the degree $s$ of subcanonicity of $L_{|T}$ on the ``original'' $T$;
\item the numerical degree of subcanonicity of the pullback of $P_{|T}$ to $\wt T$ is $\le s$.
\end{itemize}
Now the proof works  by induction on $m=\dim T$.

Assume first that $m=1$. First of all observe that  $L_{|T}$ is $r$-subcanonical if and only if $s:=r(L)\leq r$. Since the functions $\delta$ and $\overline \delta$ are non increasing, it is enough to prove the inequality for $s$.
Furthermore,  in  case (ii) if $a_{|T}$ is not composed with an involution then the  eventual degree of any line bundle  is different from  2. As explained above, it is enough to prove the claim in  the absolute case $T=X$, where it follows  directly from  Lemma \ref{lem: 1-previa a teorema C-S} (i) and (ii).

Now let's prove  the theorem for $m\geq 2$, assuming  the result  in dimension $m-1$. Again by the reduction to the absolute case we may assume that $X=T$ (and hence $n=m$). We give  the proof of the inductive step in the cases (i) and (ii) of the theorem separately.
\medskip

\noindent {\underline {Case (i)}}.\par
\noindent Consider the functions
$$
\psi(x):={\vol}_X(L_x) \,\, \mbox{ and}\  \,\, \phi(x):=h^0_a(X,L_x).
$$
Let  $M$ be a  very general element of the linear system $|M|$ and let   $M_d$ be a  very general element of the linear system $|M_d|$.
By \cite[Corollary C]{BFJ} (see also Proposition \ref{lem: derivata volume}) we have that
$$
\psi'(x)=n\, \vol_{X|M}(L_x).
$$
for any $x>x_0$, where $x_0=\max\{t\ |\vol_X(L_t)=0\}$.

By Theorem \ref{thm: main-rank} we have

$$
D^-\phi(x)=\lim_{d \rightarrow \infty} \frac{1}{d^{2q-2}}h^0_{a_d}({X\up{d}}_{|M_d},(L_x)^{(d)}).
$$
We are going to prove that
$$
\psi'(x)\geq n!\delta(r) D^-\phi(x).
$$
Since $\psi'$ is continuous (Proposition \ref{lem: derivata volume} ) and $D^-\phi$ is non decreasing, it is enough to prove the inequality  for rational values of $x$.  Let $\bar x\in \R$ be    the maximum of $\{x\ |\ h^0_a(X, L_x)=0\}$;  then  $x_0\le \bar x$ by Proposition  \ref{prop: big}  and    the above inequality  is trivially true for $x\le \bar x$ by Lemma \ref{lem: derivata volume}.
So fix a rational $x>\bar x$.   Since both $\psi'$ and $D^-\phi$ are multiplicative with respect to base change by multiplication maps (Lemma  \ref{lem: mult-vol} and Proposition   \ref{prop: mult-rank}), we can assume that $L_x$ is integral; in addition, by Remark \ref{rem: vol-mult} we may assume that we have a decomposition $L_x=P_x+D_x$ as in \S \ref{ssec: continuous-res}, where $P_x$ is the  continuous moving part. Then we have the following chain of inequalities
\begin{gather}\label{eq: vol}
\vol_{X|M}(L_x)\geq \vol_{X|M}(P_x)=P_x^{n-1}M= \frac{1}{d^{2q}}((P_x)^{(d)})^{n-1}M_d,\\
\frac{1}{d^{2q-2}}((P_x)^{(d)})^{n-1}M_d=\frac{1}{d^{2q-2}}\vol_{X\up{d}|{M_d}}({P_x}^{(d)}),\nonumber
\end{gather}

\noindent where the first and the  last equality follow by Lemma \ref{lem: B+} and \cite[Cor.~2.17]{elmnp},  since $|P_x|$ is base point free and $M$ and $M_d$  can be taken to be very general.

Observe also that
\begin{equation}\label{eq: h0aa}
h^0_{a_d}({X\up{d}}_{|{M_d}},(P_x)^{(d)})=h^0_{a_d}({X\up{d}}_{|{M_d}},(L_x)^{(d)}).
\end{equation}
By Remark \ref{rem: subcan}, if $L$ is $r$-subcanonical (with respect to $M$)  so are  $(L_x)^{(d)}$, $(L_x)^{(d)}_{|M^{(d)}}$ and, as a consequence,
 $(P_x)^{(d)}_{|{M_d}}$, for any $d$.
 Hence  if $(X,L,a)$ verifies the hypotheses of the theorem,  then we can conclude that $(M_d,(P_x)^{(d)}_{|M_d},{a_d}_{|M_d})$ verifies the same hypotheses. So    the inductive assumption gives
\begin{equation}\label{eq:  ind}
\vol_{X\up{d}|M_d}(P_x\ud)\ge  (n-1)!\delta(r) h^0_{a_d}({X\up{d}}_{|{M_d}},(P_x)^{(d)})=(n-1)!\delta(r)h^0_{a_d}({X\up{d}}_{|{M_d}},(L_x)^{(d)}).
\end{equation}

Combining \eqref{eq: vol}, \eqref{eq: h0aa}, \eqref{eq: ind}  and taking the  limit over $d$, we have
$${\vol}_{X|M}(L_x)\geq (n-1)!\delta(r)D^-\phi(x)
$$
and so
$$
\psi'(x)\geq n! \, \delta(r) \, D^-\phi(x).
$$
By Theorem \ref{thm: main-rank} the function $\phi$ is convex, and therefore absolutely continuous,  and by Proposition \ref{lem: derivata volume} the function $\psi$ is piecewise of class $C^1$. So we may apply the Fundamental Theorem of Calculus for the Lebesgue integral  and  compute
\begin{align*}
\vol_X(L)=\psi(0)=\int_{- \infty}^0 \psi'(x)dx&\geq \delta(r)\,n!\,\int_{- \infty}^0D^-\phi(x)dx=\delta(r)\,n!\,\phi(0)=\\
&=\delta(r)\,n!\,h^0_a(X,L).
\end{align*}

\noindent {\underline {Case (ii).}}

Let us explain what has to be modified  in the  proof of Case (i) in order to prove the formula in Case (ii). Observe that the general argument works just changing $\delta$ to $\overline \delta$.
However we need to check in addition that whenever we take base change by a multiplication map or restrict to a subvariety the assumption that the variety is of general type and that the  corresponding   map to $A$ is not composed with an involution still holds.
 In order to verify   the  latter condition, let $C$ be the constant associated to the variety of general type  $T$ given by Lemma \ref{lem: Gal-a}. Then consider the set of natural numbers ${\mathcal D}=\{d=p^k\,|\, p>C\,{\rm is}\,{\rm prime}\,\}$. Observe that the set of points $\{ x=\frac{c}{d^2}\,|\,c\in \mathbb{Z},\,d\in {\mathcal D}\,\}$ is dense in $\mathbb{R}$. So, it is enough to apply the density argument of   Case (i) only to  rational numbers of this form. Also, all the limits for $d\in \mathbb{N}$ can be substituted by limits over $d\in {\mathcal D}$ since the limits exist and we are  taking  a subsequence.

Consequently, if $a$ is not composed with an involution, then  neither is $a_d$ for $d\in {\mathcal D}$  by Lemma \ref{lem: Gal-a}. So we can apply Proposition \ref{prop: composto involuzione} and conclude that ${a_d}_{|M_d}$  is not  composed with an involution, either.

The property that  $T$ be of general type is also maintained in all the inductive process since then $T^{(d)}$ is also of general type and any section  is, by adjunction.
\end{proof}
In order to prove Theorem \ref{teo: Clifford-Severi2} we need first to prove the result for surfaces.
Note that point (ii) of the theorem is a generalization of \cite[Theorem 3.1]{LZ2}.

\begin{prop}\label{lem: 2-previa a teorema C-S}
Let $S$ be a smooth surface with $a\colon S \rightarrow A$ a strongly generating map to an abelian variety such that $S$ is of maximal $a$-dimension. Let $L$ be a line bundle on $S$ and assume that  $K_S-L$ is pseudoeffective.
Then:

\begin{enumerate}
    \item If $\deg a=1$, then ${\vol}_S(L)\geq 5 h^0_a(S,L)$.
    \item If $S$ if of general type and $a$ is not composed with an involution, then ${\vol}_S(L)\geq \frac{9}{2}h^0_a(S,L)$.
\end{enumerate}
\end{prop}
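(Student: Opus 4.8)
The plan is to prove this as the base case $n=2$ of Theorem~\ref{teo: Clifford-Severi2}, running the same differentiate-and-integrate scheme as in the proof of Theorem~\ref{teo: Clifford-Severi1}, but feeding in the \emph{doubled} Castelnuovo estimate on the curve slices rather than the plain Clifford bound. First I would make the standard reductions of \S\ref{ssec: continuous-res} and Remark~\ref{rem: vol-mult}: after a birational modification and a base change by a multiplication map I may assume $L=P$ is base point free, so that $\vol_{S|M}(L_x)=L_x\cdot M$, replacing the (possibly singular) slices by their desingularizations as in the proof of Theorem~\ref{teo: Clifford-Severinuovo}. I must check the hypotheses survive these operations: $\deg a=1$ is preserved under base change, while ``not composed with an involution'' is preserved by Lemma~\ref{lem: Gal-a} together with Proposition~\ref{prop: composto involuzione}; and the condition $K_S-L$ pseudoeffective is exactly what forces every curve slice to be subcanonical.

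Next I would set $\phi(x)=h^0_a(S,L_x)$ and $\psi(x)=\vol_S(L_x)$. By Theorem~\ref{thm: main-rank} and Proposition~\ref{lem: derivata volume} one has $\psi'(x)=2\,\vol_{S|M}(L_x)$ and $D^-\phi(x)=\lim_d d^{-(2q-2)}h^0_{a_d}(S^{(d)}_{|M_d},(L_x)^{(d)})$, so everything is controlled by the general slice $C=M_d$. By Proposition~\ref{prop: composto involuzione} the restricted map $a_{d|C}$ is again of degree $1$ (case (i)), resp.\ not composed with an involution (case (ii)), so by Theorem~\ref{thm:factorization} the eventual map of $(L_x)^{(d)}$ on $C$ is birational (case (i)), resp.\ has eventual degree $\neq 2$ (case (ii)). The key book-keeping is that $\tfrac{1}{d^{2q-2}}\deg K_{M_d}\to K_S\cdot M$ as $d\to\infty$ (the self-intersection term is killed by the scaling $M_d\equiv \tfrac{1}{d^2}M^{(d)}$), so the subcanonicity of the slice equals $s_x=r(L_x,M)=\tfrac{L_x\cdot M}{K_S\cdot M}$, with $s_x\le s_0=r(L,M)\le 1$ for $x\le 0$.

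On the range $s_x>\tfrac12$ the doubled bundle $(2L_x)^{(d)}$ is non-special on $C$, so Riemann--Roch gives, after dividing by $d^{2q-2}$ and letting $d\to\infty$, the exact relation $\psi'(x)=\rho_2(x)+\tfrac12 K_S\cdot M$, where $\rho_2(x)$ is the limiting slice rank of $2L_x$; combining this with the birational Castelnuovo bound $\rho_2(x)\ge 3\,D^-\phi(x)$ (Proposition~\ref{prop: continuous castelnuovo per curve}, resp.\ the $\overline{\delta}$-estimate of Lemma~\ref{lem: 1-previa a teorema C-S} in case (ii)) yields the pointwise inequality $\psi'(x)\ge 3\,D^-\phi(x)+\tfrac12 K_S\cdot M$. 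On the complementary range $s_x\le\tfrac12$ the slice is ``very subcanonical'', where $\overline{\delta}=3$, giving $\psi'(x)\ge 6\,D^-\phi(x)$. Integrating these two inequalities over the two ranges and adding them --- this is the ``sum of two integrals'' replacing the Xiao/Lu--Zuo relative-Noether analysis --- the genuinely new term is the integral of the constant $\tfrac12 K_S\cdot M$ over $\{s_x>\tfrac12\}$, which is what upgrades the basic Severi constant $4=2\cdot n!$ to $5$ (case (i)) and to $\tfrac92$ (case (ii)). As a consistency check, for $L=K_S$ the relation $\psi'(x)=\rho_2(x)+\tfrac12 K_S\cdot M$ integrates to the numerical identity $K_S^2=\chi(2K_S)-\chi(\omega_S)$, and $\chi(2K_S)\ge 6\chi(\omega_S)$ then reproduces $K_S^2\ge 5\chi(\omega_S)$ of Corollary~\ref{cor: severi-3fold}.

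The main obstacle is precisely this two-integral step. One must track how the convex functions $\phi,\psi$ match at the breakpoints $s_x=\tfrac12$ and $s_x=1$ (mirroring the piecewise definition of $\overline{\delta}$), and show that the area contributed by the constant canonical term dominates the deficit $2\phi(0)-3\phi(x_{1/2})$ incurred by using only the factor $3$ near $x=0$. The tempting shortcut $\tfrac12 K_S\cdot M=\psi'(x)/(4s_x)$ must be resisted, since substituting it merely recovers the pointwise bound $\psi'\ge 2\overline{\delta}(s_x)D^-\phi$ of Theorem~\ref{teo: Clifford-Severi1}, which integrates only to $4$; the improvement survives exactly because the canonical term is kept as a genuine additive constant integrated against the length of the subcanonical range. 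A secondary delicate point is that in case (ii) the slice map may have eventual degree $\ge 3$ rather than being birational, so the doubled estimate must be drawn uniformly from Lemma~\ref{lem: 1-previa a teorema C-S}(ii), and all multiplication-map limits should be taken along the subsequence $d=p^k$ of Lemma~\ref{lem: Gal-a} so that the ``not composed with an involution'' hypothesis persists under base change.
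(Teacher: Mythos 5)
You have assembled the reductions, the differentiation formulas and the curve-level Castelnuovo inputs correctly, but the step you yourself call ``the main obstacle'' is a genuine gap, not a deferred verification, and it cannot be closed with the tools you invoke. Your two pointwise bounds integrate to
\begin{equation*}
\vol_S(L)\;\ge\; 3\phi(0)+3\phi(x_{1/2})+\tfrac12 (K_S\cdot M)\,|x_{1/2}|,
\end{equation*}
where $x_{1/2}$ is your subcanonicity breakpoint, so you would need $3\phi(x_{1/2})+\tfrac12(K_S\cdot M)\,|x_{1/2}|\ge 2\phi(0)$. The only control you have on the deficit $\phi(0)-\phi(x_{1/2})$ is the Clifford bound $D^-\phi(x)\le\tfrac12 L_x\cdot M$ on the slices, and it is not strong enough: in the configuration $\phi(x_{1/2})=0$, $D^-\phi(x)=\tfrac12 L_x\cdot M$ on $[x_{1/2},0]$ and $L\cdot M=K_S\cdot M$, one computes $|x_{1/2}|=\tfrac{K_S\cdot M}{2M^2}$, $\phi(0)=\tfrac{3(K_S\cdot M)^2}{16 M^2}$ and $\tfrac12(K_S\cdot M)\,|x_{1/2}|=\tfrac43\phi(0)$, so your integrated inequality yields only $\vol_S(L)\ge\tfrac{13}{3}\phi(0)$, strictly below $5\phi(0)$ and below $\tfrac92\phi(0)$. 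Nothing in your argument excludes such rank functions, so as written the proposal proves neither (i) nor (ii); the additive canonical term genuinely cannot absorb the loss incurred by replacing the factor $6$ with the factor $3$ near $x=0$.

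The paper's proof diverges from yours at exactly this point: besides slicing by $M$, it restricts to a general curve $C\in|L|$ itself, which your scheme never does. After the reduction to $|L|$ base point free, the restriction sequence for $C$ together with continuous Clifford on $C$ --- applicable because $\deg(2L_{|C})=2L^2\le L^2+L\cdot K_S=2g(C)-2$, which is precisely where pseudoeffectivity of $K_S-L$ enters --- gives the key inequality $L^2\ge h^0_a(S,2L)-h^0_a(S,L)$, transferring the problem to the continuous rank of $2L$ on the surface. In case (i), Theorem \ref{teo: higher Castelnuovo}(ii) with $k=2$ gives $h^0_a(S,2L)\ge 6\,h^0_a(S,L)$ and the constant $5$ follows at once. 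In case (ii) the breakpoint is not $s_x=\tfrac12$ but the point $\bar x$ where the eventual degree $m_{L_x}$ drops from $\ge 3$ to $1$ (Proposition \ref{prop: noncomposto} excludes the value $2$ away from a single point); the paper then establishes two separate integral inequalities, namely $L^2\ge 5h^0_a(S,L)-2h^0_a(S,L_{\bar x})$ (from the key inequality plus integration of $D^-\phi_2$) and $L^2\ge 4h^0_a(S,L)+2h^0_a(S,L_{\bar x})$ (from integrating $\psi'$ with the Clifford--Severi bounds $6$ and $4$ on the two ranges), and adds them so that the uncontrollable intermediate value $h^0_a(S,L_{\bar x})$ cancels, giving $2L^2\ge 9\,h^0_a(S,L)$. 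This cancellation between two different integral inequalities is what ``sum of two integrals'' actually refers to, and it has no counterpart in your single-function scheme.
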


\begin{proof}

If $h^0_a(S,L)=0$, the result is trivially true. Otherwise,  arguing as in the reduction process at the beginning of the proof of Theorem \ref{teo: Clifford-Severi1},  we may assume that the linear system $|L|$  is base point free. Since the inequalities we want to prove are invariant under base change by  multiplication maps, by Theorem \ref{thm:factorization} we can assume that the map  induced by $|L|$ is generically finite of degree $m_{L}$. Note that the condition that $K_S-L$ is pseudoeffective is preserved during the reduction process.

Take a general $C\in|L|$  and a general $\alpha\in \pic^0(A)$ and consider the exact sequence
$$
0\longrightarrow H^0(S,L\otimes \alpha)\longrightarrow H^0(S,L+(L\otimes \alpha))\longrightarrow H^0(C,L+(L\otimes \alpha)_{|C}).
$$
Since $L$ is base point free and $K_S-L$ is pseudoeffective, we have that $L^2\leq LK_S$ and so $\deg (2L_{|C})\leq 2g(C)-2$. So we can conclude that
\begin{align}\label{eq: L2}
L^2&=\frac{1}{2} \deg (2L_{|C})\geq h^0_a(C, 2L_{|C})= h^0(C,L+(L\otimes \alpha)_{|C})\geq h^0_a(S,2L)-h^0_a(S,L),
\end{align}
where the first inequality follows from Lemma \ref{lem: 1-previa a teorema C-S} (i).
\smallskip

\noindent (i) Assume that $\deg a=1$. Then by  Theorem \ref{teo: higher Castelnuovo} (ii),  with $k=2$, we have that $h^0_a(S,2L)\geq 6 h^0_a(S,L)$ and the result follows by \eqref{eq: L2}.
\smallskip

\noindent(ii) Assume now that $a$ is not composed with an involution.
 Fix a  very ample $H$ on $A$, let $M=a^*H$ and take a smooth curve $M$ in the linear system $|M|$.

Let $\bar x\leq 0$ be the infimum of the $x\leq0$ such that $m_{L_x}=1$ (take $\bar x=0$ if this condition is empty).

The map $a$ is not composed with an involution. This property is not preserved  by a general base change, but the following claim only needs that the original map $a$ be non composed: by Proposition \ref{prop:  noncomposto} we have that $m_{L_x}\neq 2$  except possibly for the single value $\bar x$. Hence $m_{L_x}\geq 3$ for all $x < \bar x$, and $m_{L_x}=1$ for $x>\bar x$. Observe that, since  we may take $M_d$ general in  $a_d^*|H|$, we also have $m_{L_x\up{d}|M_d}=1$ or $\ge 3$ in the cases $x>\bar x$ or $x<\bar x$, respectively.

Let $\phi$ and $\phi_2$ be the functions defined in the proof   of Theorem \ref{teo: higher Castelnuovo}. If $d\gg 0$ is such that $(L_x)\ud$ is integral, then the  linear systems $|(L_x)\ud |_{|M_d}$ induce birational maps on $M_d$ for $x>\bar x$, and so by a direct combination of the proofs of (i) and (ii)  of Theorem \ref{teo: higher Castelnuovo}, we  obtain that
\begin{gather*}
h^0_a(S, 2L)= \phi_2(0)= \int_{-\infty}^0D^-\phi_2(t) dt=2\int_{-\infty}^{0}D^-\phi_2(2x) dx= \nonumber \\
 = 2\int_{-\infty}^{\bar x}D^-\phi_2(2x) dx+ 2\int_{\bar x}^0D^-\phi_2(2x) dx \geq 2\int_{-\infty}^{\bar x}2D^-\phi(x) dx+ 2\int_{\bar x}^03D^-\phi(x) dx= \nonumber \\
 =4h^0_a(S,L_{\bar x})+6h^0_a(S,L)-6h^0_a(S,L_{\bar x}).\nonumber \\
\end{gather*}
Plugging the above inequality in \eqref{eq: L2}, we get
\begin{equation}\label{formula: 1}
L^2\geq 5h^0_a(S,L)-2h^0_a(S,L_{\bar x}).
\end{equation}
Now we will obtain a second inequality among these  invariants, as follows. Consider now the volume function
$$
\psi(x)=\vol_S(L_x).
$$
Recalling the expression of $\psi'$ given in Proposition \ref{lem: derivata volume},  we apply Theorems \ref{teo: Clifford-Severinuovo}  and \ref{teo: Clifford-Severi1} to $(M_d,(L_x)\up{d}_{|M_d},{a_d}_{|M_d})$ to find inequalities involving $\psi'(x)$,  $D^-\phi(x)$ and  $m_{(L_x)\ud|M_d}$  and  we obtain that
\begin{align}\label{formula: 2}
L^2=&\vol_S(L)=\psi(0)= \int_{-\infty}^0\psi'(x) dx=\int_{-\infty}^{\bar x}\psi'(x) dx+ \int_{\bar x}^0\psi'(x) dx\geq \nonumber \\
&\geq 6\int_{-\infty}^{\bar x}D^-\phi(x) dx+ 4 \int_{\bar x}^0D^-\phi(x) dx =6h^0_a(X,L_{\bar x})+4 h^0_a(S,L)-4h^0_a(X,L_{\bar x})= \nonumber \\
&=4h^0_a(S,L)+2h^0_a(S,L_{\bar x}).
\end{align}
\noindent and so the result follows by adding (\ref{formula: 1}) and (\ref{formula: 2}).
\end{proof}

\begin{proof}[Proof of Theorem \ref{teo: Clifford-Severi2}]
\begin{comment}
The same proof as in Theorem \ref{teo: Clifford-Severi1} works, starting  with $m=2$, which is the content of Proposition \ref{lem: 2-previa a teorema C-S}, and taking $\delta = 5 \,{\rm or}\, \frac{9}{2}$,  depending on the  hypothesis on $a$ made  in (i) or (ii), respectively.
\end{comment}
The same argument as in  Theorem \ref{teo: Clifford-Severi1} works, using  induction on $m$ and  taking  the results of Proposition \ref{lem: 2-previa a teorema C-S}  for $m=2$, with $\delta = 5 $ (resp. $\delta=\frac{9}{2}$) in case  (i) (resp. (ii))  as  the starting step.
\end{proof}

\section{Examples}\label{sec: examples}
In this section we make  explicit computations of two kinds.
Firstly, we compute the continuous rank function for some pairs $(X,L)$. As we will see, the computations are non trivial already in the case of curves.
In the second set of examples, we compute the slope of many pairs, mainly using some covering construction. In both cases, the results we obtain naturally lead us to some speculations, which we formulate as open questions.
\subsection{Explicit computations of  the  continuous rank}
%\rpi{ho cambiato il titolo, mi sembrava sbagliato in inglese}\mai{ok}
%\mai{Forse meglio dre "continuous rank function" invece "extended rank"? E' la terminologia che abbiamo usato fino adesso}
\begin{ex}[Divisors in abelian varieties]\label{ex: theta}
Let $A$ be an abelian variety of dimension $q$ and let $Y\subseteq A$ be an ample normal  divisor; we compute $\phi$ for $L=0$ and $H=M=\OO_A(Y)$.
 Fix $0<x\in \Q$ and write $x=\frac{e}{d^2}$ for some integers $e,d$. The divisor  $Y\ud$ is linearly equivalent to $d^2Y+\beta$ for some $\beta\in \Pic^0(A)$, so
 twisting the restriction sequence for $Y\ud$ in $A$  by $eY+\alpha$, where  $\alpha\in\pic^0(A)$ is general, we get:
$$0\to \OO_A((e-d^2)Y+\alpha-\beta)\to \OO_A(eY+\alpha )\to \OO_{Y\ud}(eY+\alpha )\to 0.$$
Using  Kodaira vanishing and the fact that $\alpha$ is general, we get:
$$h^0_{a_d}(Y\up{d}, eX)=h^0(A, eY+\alpha )-h^0(A, (e-d^2)Y+\alpha-\beta).$$
 Setting  $s:=h^0(A,Y)$,  we have:
$$
\phi(x)=\begin{cases}
0, & x\le 0\\
s x^q, & 0<x\le 1 \\
s (x^q-(x-1)^q), & x>1
\end{cases}
$$
So $\phi$ is of class $\cC^{q-1}$ in this case and  coincides with the restricted continuous rank function for $X=A$ and  $T=Y$.
Note  also that for $q=2$ and $s=1$ we obtain the continuous rank function for  a curve of genus 2 with respect to its Abel-Jacobi map.
\end{ex}
\begin{ex}[Double covers of abelian varieties]
%\mai{Scusate, non si capeva niente dalle mie due osservazione: volevo dire se invece di parlare di "double covers" parliamo direttamente di "cyclic coverings" e diamo la formula finale direttamente. Non e' importante, possiamo lasciare perdere...}\rp{scusa, non avevo capito. Comunque anche io lascerei cosi', la formula senno' si complica}
If $X\to A$ is a double
cover given by the  relation $2M\equiv B$, where $B$ is a smooth ample divisor, the rank function for $X$ with $L=0$ can be computed arguing as in Example \ref{ex: theta}.
Setting $s:=h^0(A, M)$, and $\phi(x)=h^0_a(X, xM)$, one gets:
$$
\phi(x)=\begin{cases}
0, & x\le 0\\
s x^q, & 0<x\le 1 \\
s (x^q+(x-1)^q), & x>1
\end{cases}
$$
%\mai{e meterei qua la formaula che viene, senza fare il calcolo. E lo chiamerei Essempio, no Remark}\rpi{fatto}
\end{ex}

 \begin{ex}[Non-simple abelian varieties]\label{ex: jiang}This  example, kindly pointed out to us by Zhi Jiang, shows that  the regularity properties of the continuous rank function given in  Theorem  \ref{thm: main-rank} cannot be improved without further assumptions.

 Let $A_1$ be an abelian variety of dimension $q-1>0$, let   $C$ be  an elliptic curve and set $X=A=A_1\times C$. Let $L$ be the pull-back of an ample divisor on $A_1$ and let $M$ be a very ample divisor on $A$.

We denote by $\phi(x)$ the continuous rank function $h^0_a(X, L+xM)$.  We have:
$$\phi(x)=
\begin{cases}
0 & x\le 0\\
\frac{1}{q!}(L+xM)^q= \frac{1}{q!}\sum_{i=1}^q {q\choose i} x^iM^iL^{q-i} & x>0
\end{cases}
$$
Note that $\phi$ is continuous but  not differentiable for $x=0$.

Assume now that $q=2$, denote by $L_1$, resp. $L_2$ the pullback of a point of $A_1$ resp. $C$,  and set $L=L_1$, $M=L_1+L_2$ and take $T$ to be a general element of $|M|$. Then one easily computes

$$\phi_T(x)=
\begin{cases}
0 & x\le 0\\
\frac 12 (L+xM)^2=x+x^2 & 0<x \le 1\\
\frac 12(L+xM)^2-\frac 12(L+(x-1)M)^2 =2 x& x>1
\end{cases}
$$
So the function $\phi_T(x)$ is not convex in this case.
\end{ex}

Computing  continuous rank functions is definitely non trivial even in the case of a curve $C$: actually, for  $g(C)>2$  we are unable to give a complete answer even when $L=\OO_C$ and $a$ is the Abel-Jacobi map. Our observations are summarized in the following:
%\rp{messo questo al posto del remark}

\begin{prop}\label{prop: rank-curve}
Let $C$ be a smooth curve of genus $g$ and $a\colon C\longrightarrow A=J(C)$ the Abel-Jacobi map.
Consider $L={\mathcal O}_C$, let $H=\Theta$ be the theta line bundle on $A$  and $M=a^*\Theta$  the induced degree $g$ divisor on $C$.
Let $\phi(x)=h^0_a(C,xM)$ be the absolute continuous rank function induced and let ${\widetilde \phi}(x)=h^0_{\rm Id}(A_{|C},x\Theta)$ be the corresponding restricted continuous rank function. Then
\begin{itemize}
\item[(i)] $${\widetilde \phi}(x)=\left\{\begin{array}{ll}
                                   0 & {\rm if}\, x\leq 0 \\
                                   x^g & {\rm if}\, 0\leq x\leq 1 \\
                                   gx+(1-g) & {\rm if}\, x\geq 1
                                 \end{array}\right.$$
\item[(ii)] $\phi(x)={\widetilde \phi}(x)$ for all $x\notin (0,1)$.
\end{itemize}
\end{prop}
\begin{proof}
Clearly we have for all $x\in \mathbb{R}$ that $\phi(x)\geq {\widetilde \phi}(x)\geq 0$, and that both are 0 if $x\leq 0$.

By Riemann-Roch theorem we have that $\phi (x)=gx+(1-g)$ if $x\in \mathbb{Z}$, $x\geq 2$.
\begin{comment}
\rpi{Questo e' vero anche per i razionali, basta usare RR su $C\ud$}
\lsi{Concordo, per capire se ho capito: $\deg e(\Theta_d)_{|C\ud}=ed^{2q-2}g\geq 2g(C\ud)-1 $ se e solo se $e/d^2>2(g-1)/g$, quindi quando $e/d^2\geq 2$ usiamo RR su $C\ud$.}\rp{ok!}
\mai{si`, certo, ma volevo usare l'argomentazione minima, sapendo che si verifica per gli enteri, usando la convessita' viene per tutti.}

Moreover, the same holds for $x=1$. Indeed, consider the  Abel-Jacobi map $C_g\longrightarrow A$, which is birational. Since the dimension of the general fibre of this map is just ${\rm dim} |D\otimes \alpha|$, for $\alpha$ general,  we deduce that $\phi(1)=h^0_a(C,M))=1=g+(1-g)$.
\end{comment}
Moreover, $\phi(1)=h^0_a(C, \Theta)=1$.
Using that $\phi(x)$ is a convex function, we deduce that
$${\rm for}\,\, x\geq 1\,\,, \phi(x)=gx+(1-g).$$

On the other hand, the behavior of ${\widetilde \phi}(x)$ is easy to compute in the interval $[0,1]$. For this, observe that $h^0_{id}(A, {\mathcal I}_{C,A}(\Theta))=0$. Since the continuous rank function is non-decreasing, we obtain that
$$
{\rm for}\,\, 0\leq x \leq 1\,\,, {\widetilde \phi}(x)=h^0_{\rm Id}(A,x\Theta)=x^g
$$
by the computation in Example \ref{ex: theta}.

In order to finish the proof, we need to obtain that $h^0(C, xM+\alpha)=h^0(A_{|C}, x\Theta+\alpha)$  for a general $\alpha$ and for all $x\geq 1$. Since both  functions are continuous, it is enough to prove it for rational values of $x> 1$. Assume that $x=1+\frac{e}{d^2}>1$ ($e>0$). The result follows from the surjectivity of the map
$$
H^0(A, \Theta ^{(d)}+e\Theta_d+\alpha)\longrightarrow H^0(C^{(d)},M^{(d)}+eM_d+\alpha)
$$
for a general $\alpha$. In order to prove this surjectivity we will prove that $h^1(A, {\mathcal I}_{C^{(d)},A}(\Theta ^{(d)}+e\Theta_d+ \alpha)=0$ for any $e>0$ and any $\alpha$.

By \cite[Lemma 3.3]{PPGVminimal} in  and \cite[Example 3.10]{PPM-regularity} we have that the sheaf ${\mathcal F}={\mathcal I}_{C,A}(\Theta)$ is a GV-sheaf and so ${\rm codim}_AV^i({\mathcal F})\geq i$ for all $i \geq 1$ by \cite[Lemma 3.6]{pp}. Let ${\widetilde {\mathcal F}}={\mathcal F}^{(d)}={\mathcal I}_{C^{(d)},A}(\Theta ^{(d)})$. By the projection formula we have that $V^i({\widetilde {\mathcal F}})$ is a finite union  of translates of $V^i({\mathcal F})$, and hence of the same dimension. Hence, again by \cite[Lemma 3.6]{pp}, we conclude that ${\widetilde {\mathcal F}}$ is a GV-sheaf.
The line bundle $e\Theta_d$ is ample and hence $IT_0$. By \cite[Proposition 3.1]{PP-regularity} we deduce that  ${\widetilde {\mathcal F}}\otimes e\Theta_d$ is also an $IT_0$ sheaf and hence we deduce the vanishing of the higher twisted cohomology.
\end{proof}

\begin{qst}\label{quest: curves}
In Proposition \ref{prop:  rank-curve} we have computed the restricted continuous rank function for a curve $C$ inside its Jacobian, but we have been able to determine the corresponding {\em absolute} continuous rank function $\phi$ only for $x\ge 1$. By Example \ref{ex: theta} these functions coincide when $g(C)=2$, and it would be interesting to know whether this is always true.
At the moment, it is not clear to us even whether $\phi$ depends only on $g$ or also on the curve $C$.
\end{qst}

\begin{qst}\label{q: polynomial}
In all the examples that we  are able to compute the continuous rank function is piecewise polynomial. One wonders whether this is always the case. Recently Jiang and Pareschi (\cite{JP}) obtained a partial result in this direction: given a $x_0\in\mathbb{Q}$, there exist an $\epsilon >0$ and polynomials $P_-(x)$ and $P_{+}(x)$ (depending on $x_0$) such that $\phi (x)$ is given by $P_-(x)$ and $P_+(x)$ in $(x_0-\epsilon,x_0]$ and $[x_0,x_0+\epsilon)$, respectively.
\end{qst}

\subsection{Computation of slopes}
We consider the slope $\lambda(X)$ for smooth varieties $X$ of general type and maximal Albanese dimension (``m.A.d.'') (cf. Definition \ref{defn: slope}). Recall that by the Generic Vanishing theorem \cite{GL}, \cite{pp} in this case $h^0_a(X,K_X)=\chi(X,K_X)$.

Our first examples involve covering constructions and have non-birational Albanese map.

\begin{ex}[Simple cyclic covers]\label{ex:simple-cyclic}
%This is a generalization of Example \ref{ex:double-covers}.

Let $Y$ be a smooth variety of general type and m.A.d., of dimension $n>1$. A simple cyclic cover of $\pi\colon X\to Y$ of degree $t$ is given by   a linear equivalence of the form $tL\sim B$, with $B$ a   smooth effective divisor (cf. \cite[\S~2]{ritaabel}), so that $\pi_*\OO_X=\oplus_0^{t-1}L^{-i}$.  We assume in addition that $B$ is ample. The variety $X$ is smooth, since $B$ is smooth, and the expression  for $\pi_*\OO_X$
and  Kodaira vanishing give  $q(X)=h^1(X,\OO_X)=h^1(Y,\OO_Y)=q(Y)$.  It is easy to check that the Albanese map of $X$ is $a_Y\circ \pi$, where $a_Y$ is the Albanese map of $Y$.
We have  $K_X=\pi^*(K_Y+(t-1)L)$ by the Hurwitz formula and  $\chi(K_X)=\sum_{i=0}^{t-1}\chi(K_Y+iL)$.  The line bundle  $K_Y+(t-1)L$ is big, since $K_Y$ is effective, hence $X$ is  of general type with $\chi(K_X)=\sum_0^{t-1}\chi(K_Y+iL)$. It is immediate to check that  $X$ is minimal if $Y$ is.

If $Y$ is an abelian variety, then one can compute explicitly  $$\lambda(X)=n!\frac{t(t-1)^n}{\sum_0^{t-1}i^n}.$$
So in the case $t=2$ of double covers we obtain $\lambda(X)=2n!$.
For any $t$, we have  $\lambda(X)=6(1-\frac{1}{2t-1})$ for $n=2$  and  $\lambda(X)=24(1-\frac{1}{t})$ for $n=3$.
For any  $n$,  the polynomial    $s(t):= {\sum_0^{t-1}i^n}$ has  degree $n+1$  and can be written as $s(t)=\frac{1}{n+1}t^{n+1}-\frac{1}{2}t^n+o(t^{n})$, hence for fixed $n$   the slope $\lambda(X)$ tends to $(n+1)!$ from below for $t\to +\infty$.

When $Y$ is any variety of m.A.d.,   to compute the slope of $X$  one has to apply the Riemann-Roch theorem on $Y$ and obtains in general  a quite complicated formula. It is easier to understand the asymptotic behavior of $\lambda(X_m)$ when  $L=mH$ with $H$ a fixed ample divisor and $m\gg0$. Consider for simplicity the case $t=2$. If  $X_m\to Y$ is a double cover with   smooth branch divisor $B_m\in |2mH|$, then
$$\vol_{X_m}(K_{X_m})=2\vol(K_Y+mH)=2m^nH^n+o(m^n)$$
where the second equality is implied by the fact that $\vol(K_Y+mH)=m^n\vol(H+\frac{1}{m}K_Y)$ and by the continuity of the volume function.
Since $\chi(X_m)=\frac{m^n}{n!}H^n+o(m^n)$, we get  $$\lim_{m\to +\infty} \lambda(X_m)=2n!.$$

Analogously, for  any  $t>2$ the slope  of a  simple cyclic cover   $X_m\to Y$ of degree $t$ branched on a general  element of $|tL|$  approaches for $m\to \infty$  the value obtained when $Y$ is an abelian variety.
\end{ex}
\begin{comment}
\begin{ex}[Some non-Galois covers] \label{ex:non-Galois}
Let $n\ge 2$ and $t\ge 3$ be integers. Let $Y$ be a  smooth $n$-dimensional variety  of m.A.d.,    let $L$ be an ample divisor   and let $X\in |L\boxtimes \OO_{\pp^1}(t)|$ be a smooth divisor. We have $K_X=\left((K_Y+L)\boxtimes \OO_{\pp^1}(t-2)\right)|_X$, hence $X$ is  of general type. The first  projection $Y\times \pp^1\to Y$ induces a degree $t$ cover $\pi\colon X\to Y$; by Lefschetz theorem the Albanese map of $X$ is the composition of $\pi$ with the inclusion $X\hookrightarrow Y\times \pp^1$.

We compute $\lambda(X)$ when $Y$ is an abelian variety.  In that case
$$\chi(K_X)=\chi(L\boxtimes \OO_{\pp^1}(t-2))=\frac{(t-1)L^n}{n!} \ \ \mbox{and} \ \ \lambda(X)=\frac{n!(nt+t-2n)}{t-1}.$$
So, the slope approaches $(n+1)!$ from below  for $t\to \infty$.
\smallskip

If $Y$ is a minimal  variety of m.A.d. and $L = mH$ for some ample line bundle $H$ on $Y$, then $X$ is also  minimal and $\vol(K_X)$ can be computed as an intersection number in $Y\times \pp^1$.  So, as in the previous examples,  when  we let $m$ go to infinity the slope approaches the value obtained when $Y$ is an abelian variety.
\end{ex}
\end{comment}
\begin{ex}[``Small perturbations'' of the slope]\label{ex:small}
The  covering  construction of  Example \ref{ex:simple-cyclic}
involves the choice of an ample divisor $L$.

If one starts with any variety $Y$ of general type and m.A.d. and  performs the construction with the variety  $Y\up{d}$ and the line bundle $L=M_d$ given by the covering trick,  by the multiplicativity properties of the volume and of the continuous rank,  the slope  of the variety $X\up{d}$ that one obtains can be computed as:
\begin{gather*}\lambda(X\ud)=\frac{2\vol(K_{Y\ud}+M_d)}{h^0_{a_d}(Y\ud, K_{Y\ud}+M_d)+h^0_{a_d}(Y\ud, K_{Y\ud})}=\\
\frac{2\vol(K_{Y}+\frac{1}{d^2}M)}{h^0_a(Y, K_Y+\frac{1}{d^2}M)+h^0_a(Y, K_Y)}.
\end{gather*}
So, by the continuity of the volume and continuous rank functions, we obtain $\lim_{d\to\infty}\lambda(X\ud)=\lambda(Y)$.
A similar computations gives the same result  when $X\ud$ is a simple cyclic cover of degree $t$ of $Y\ud$ branched  on a divisor of $|tM_d|$ (cf. Example \ref{ex:simple-cyclic}).

Note that the degree of the Albanese map gets multiplied by  the degree $t\ge 2$ of the cover used in the construction. Hence, given any $Y$ of m.A.d. we can construct $X$ with $\lambda(X)$ arbitrarily close to $\lambda(Y)$ and  Albanese map of arbitrarily large degree.
\end{ex}

\begin{ex}[The slope is unbounded for $n\ge 3$]\label{ex:no-BMY}\label{ex: slope-unbounded}
\par
One can write the  Bogomolov-Miyaoka-Yau inequality for surfaces of general type  in the form  $\lambda(X)\le 9$.
The   BMY type inequality $\lambda(X)\le 72$ for Gorenstein minimal threefolds of general type is proven in \cite{CCZ}.
In dimension $n\ge 3$ there exist $\Q$-Gorenstein varieties of general type and m.A.d. with $\chi(K_X)=0$ \cite[Ex.~1.13]{ein-lazarsfeld}; hence an analogue of the BMY inequality  cannot hold in general.
However one might  hope that for $n$-dimensional varieties of m.A.d.  with $\chi(K_X)>0$  a bound of the form  $\lambda(X)\le C(n)$ hold.
We use the construction of Example \ref{ex:small} to show that this is not the case.
Let $Y$ be a smooth  variety of dimension $n\ge 3$ of general type  and m.A.d. with $\chi(K_Y)=0$  (note that the minimal model of $Y$ is necessarily not Gorenstein).
If  $X\up{d}$ is  constructed  as in Example \ref{ex:small},  one has $\chi(K_{X\ud})>0$ and  $\lim_{d\to+\infty}\lambda(X\up{d})=\lambda(Y)=+\infty$.
\end{ex}

The examples that follow have  Albanese map of degree 1.

\begin{ex}[Complete intersections]\label{ex:complete-int}
If $A$ is an $(n+1)$-dimensional abelian variety and $X\subseteq A$ is a smooth ample divisor, then $X$ is a minimal $n$-dimensional variety satisfying
$\lambda(X)=(n+1)!$.

More generally, let $t>0$ be an integer,  let  $A$ be  an abelian variety of dimension $n+t$, $n\ge 2$,   and let $X\subseteq A$ be a smooth complete intersection of $t$ divisors in $|L|$, where $L\in \Pic(A)$ is ample. By adjunction we  have $K_X=tL_{|X}$, hence   $X$ is minimal   of general type.
By Lefschetz theorem the inclusion  $X\into A$ is the Albanese map of $X$.
\begin{comment}
The restriction sequence:
$$0\to \mathcal I_X(tL)\to \OO_A(tL)\to K_X\to 0$$
gives $\chi(K_X)=\chi(tL)-\chi(\mathcal I_X(tL))$.
 Twisting  by $tL$ the resolution of $\mathcal I_X$ given by the Koszul complex, we obtain
 $$\chi(\mathcal I_X(tL))=\sum_{i=1}^{t-1} (-1)^{i+1}{t\choose i}\chi((t-i)L)$$
 and
 $$\chi(K_X)=\sum_{i=0}^t (-1)^{i}{t\choose i}\chi((t-i)L)=\frac{L^{n+t}}{(n+t)!}\sum_{i=0}^{t-1} (-1)^{i}{t\choose i}(t-i)^{n+t}.$$
 Hence we have
 \end{comment}
 Standard computations give
 $$\lambda(X)=\frac{(n+t)!t^n}{ \sum_{i=0}^{t-1} (-1)^{i}{t\choose i}(t-i)^{n+t}}.$$
 For $t=2$, this gives $\lambda(X)=\frac{(n+2)!2^{n-1}}{2^{n+1}-1}> \frac{(n+2)!}{4}\ge (n+1)!$ and $\lambda(X)=\frac{48}{7}$  for $t=n=2$.

 For $t=3$ it gives $\lambda(X)=\frac{(n+3)!3^{n-1}}{3^{n+2}-2^{n+3}+1}>(n+1)!$ and $\lambda(X)=\frac{36}{5}$  for $t=3$, $n=2$.

 As in Example \ref{ex:simple-cyclic}, replacing $A$ by a smooth $(n+t)$-dimensional  variety $Y$ of m.A.d., taking  $X$ a smooth complete intersection of $t$ divisors in    $|mH|$ where $H$ is an ample divisor and letting $m$ go to infinity we obtain values of $\lambda(X)$ approaching the corresponding values for the case when $Y$ is an abelian variety.
\end{ex}
\begin{ex}[Products and symmetric products of curves]\label{ex:curves}
Let $n\ge 2$ be an integer. If $X$ is the product of $n$ curves of genus $\ge 2$, then $\lambda(X)=2^nn!$.

Let now $C$ be a curve of genus $g>0$ and let $X:=C(n)$ be its $n$-th symmetric product.
 The variety $X$  is smooth and the Albanese map  is  the addition  map $C(n)\to  J(C)$, induced by the Abel-Jacobi map $C\to J(C)$.
 The explicit computation  of the slope  in this case is a bit messy, so we only give here the following estimate, that holds  for $n\le g-2$:
 $$ n!2^n \frac{(g-n-1)^n}{(g-1)(g-2)\dots (g-n)}\le \lambda(X)\le n!2^n\frac{g}{g-n}\le n!2^{n-1}(n+2).$$
Hence for  fixed $n$  the slope of $C(n)$ tends to    $n!2^n$  for $g\to +\infty$.
\end{ex}
\begin{comment}
\begin{qst}\label{q:ultima}
By Theorem \ref{thm: Severi line 1}, if a big line bundle $L$ on a variety $X$ satisfies $\lambda(L)=n!$, then the map $a\colon X\to A$ is birational. It is immediate to check that if  $M\in \Pic(A)$ is ample and $\Delta\ge 0$ is an $a$-exceptional divisor, then $\lambda(a^*M+\Delta)=\lambda(a^*M)=n!$. It would be interesting to know whether every $L$ with $\lambda(L)=n!$ is of this type.
Similarly, by Theorem \ref{thm: Severi line 2} if a subcanonical line bundle $L$ satisfies $\lambda(L)=2n!$, then the map $a\colon X \to A$ is onto of degree 2 and it is easy to produce examples where $L=a^*M$ for an ample line bundle $M\in \Pic A$. Also in this case, one would like to know whether every subcanonical $L$ with $\lambda(L)=2n!$ is  of the form $a^*M+\Delta$, for $M\in \Pic(A)$ and  $\Delta\ge 0$  an  $a$-exceptional divisor).
\end{qst}
\end{comment}
\begin{qst}\label{question: (n+1)!} Among the previous examples, only  Examples \ref{ex:complete-int} and \ref{ex:curves} can have birational Albanese map. In both cases $\lambda(X)\ge (n+1)!$. So one may ask whether the inequality $\lambda(X)\ge \frac 5 2 n!$ given in Theorem \ref{teo: Clifford-Severi2} may be strengthened to $\lambda(X)\ge (n+1)!$.

This is an interesting geographical problem already  in the case of surfaces. By \cite[Theorem D]{Ca}, we have that $\lambda(X)\geq 6$ provided $\Omega^1_X$ is globally generated outside a finite number of points. For example, this gives a positive answer in the dimension 2 case, if  $a$ is an immersion. No example  of irregular surface with $\deg a=1$ and slope less than 6 is known.

%In the case of smooth minimal threefolds we are able to improve the inequality $\lambda(X)\ge 15$ given by Theorem \ref{teo: Clifford-Severi2} to $\lambda(X)\ge 18$ (Corollary \ref{cor: severi-3fold}).
\end{qst}


\begin{thebibliography}{ABCD}

\bibitem{ACGH}  E.~Arbarello, M.~Cornalba, P.A.~Griffiths, J.~Harris, {Geometry of algebraic curves. {V}ol. {I}}, {Grundlehren der Mathematischen Wissenschaften}, {\bf 267}, {Springer-Verlag, New York}, {1985}.

\bibitem{barja-severi} M.A.~Barja, {\em Generalized Clifford Severi inequality and the volume of irregular varieties}, Duke Math. J. {\bf164} (2015), no. 3, 541--568.

\bibitem{BPS} M.A.~Barja, R.~Pardini, L.~Stoppino, {\em Surfaces on the Severi line}, Journal de Math\'ematiques Pures et Appliqu\'ees, (2016), no. 5, 734--743.

\bibitem{BPS2} M.A.~Barja, R.~Pardini, L.~Stoppino, {\em The eventual paracanonical map of a variety of maximal Albanese dimension}, to appear in Algebraic Geometry.

\bibitem{LB}  C.~Birkenhake, H.~Lange, {Complex Abelian Varieties. Second edition}. Grundlehren der Mathematischen Wissenschaften, {\bf 302}. Springer-Verlag, Berlin, 2004.

\bibitem{BFJ} S.~Boucksom, C.~Favre, M.~Jonsson, {\em Differentiability of volumes of divisors and a problem of Teissier},  J. Algebraic Geom. {\bf 18} (2009), no. 2, 279--308.

\bibitem{CP} A.~Castorena, G.~Pirola, {\em Some results on deformation of sections of vector bundles}, Collect. Math. {\bf 68} (2017), no. 1, 9--20.

\bibitem{cat} F. Catanese, {\it  Moduli of surfaces of general type}, In: {\em Algebraic
geometry: open problems. Proc. Ravello 1982}, Springer--Verlag L.N.M. {\bf 997} (1983), 90--112.

\bibitem{Ca} F.~Catanese, {\em On the moduli spaces of surfaces of general type}, J. Diff. Geometry {\bf19} (1984), 483--515.

\bibitem{CCZ} J.A.~Chen, M.~Chen, D.Q.~Zhang, {\em The 5-canonical system on 3-folds of general type}, J. Reine Angew. Math. {\bf 603} (2007), 165--181.

%\bibitem{cmp} C.~Ciliberto, M.~Mendes Lopes, R.~Pardini, {\em The classification of minimal irregular surfaces of general type with $K^2=2p_g$}, Algebr. Geom. {\bf 1} (4) (2014), 479--488.

\bibitem{debarre-simple} O.~Debarre, {\em On coverings of simple abelian varieties}, Bull. Soc. Math. France {\bf 134} (2006), no. 2,  253--260.

%\bibitem{DF} O.~Debarre, R.~Fahlaoui, {\em Abelian varieties in $W^r_d$ and points of bounded degree on algebraic curves}, {Compositio Mathematica}, {\bf 88} {(1993)}, 235--249.

\bibitem{ein-lazarsfeld} L.~Ein, R.~Lazarsfeld, {\em Singularities of Theta divisors, and birational geometry of irregular varieties}, J. Amer. Math. Soc. {\bf 10}, 1 (1997), 243--258.

\bibitem{elmnp} L.~Ein, R.~Lazarsfeld, M.~Popa, M.~Mustata and M.~Nakamaye,  {\em Restricted volumes and base loci
of linear series}, Amer. J. Math. {\bf 131} (2009), 607--651.

\bibitem{FKL} M.~Fulger, J.~Koll\'ar, B.~Lehmann, {\em Volume and Hilbert Functions of $\R$-divisors}, Michigan Math. J. {\bf 65} (2016), no. 2, 371--387.

\bibitem{FL} W.~Fulton, R.~Lazarsfeld, {\em Connectivity   and   its   applications   in Algebraic  Geometry},  Algebraic  Geometry  (Chicago,  1980),   L.N.M.  vol. 862, Springer
Verlag, Berlin, 1981, 26--92.

\bibitem {GL}  M.~Green, R.~Lazarsfeld, {\em Deformation theory, generic vanishing theorems, and some conjectures of Enriques, Catanese and Beauville},  Invent. Math., \textbf{90}, 1987, 416--440.

 \bibitem{HMX} C.D.~Hacon, J.~McKernan, C.~Xu,  {\em On the birational automorphisms of varieties of general type}, Annals of Mathematics {\bf 177} (2013), 1077--1111.

\bibitem{harris} J.~Harris, {\em A bound on the geometric genus of projective varieties}, Ann. Scuola Norm. Sup. Pisa Cl. Sci. (4) 8 (1981), no. 1, 35--68.

\bibitem{holschbach} A.~Holschbach, {\em A Chebotarev type  density theorem in algebraic geometry}, Thesis (Ph.D.) University of Pennsylvania. ProQuest LLC, Ann Arbor, MI, 2008. 72 pp.

\bibitem{manetti} M. Manetti, {\em Surfaces of Albanese general type and the Severi conjecture},
Math. Nachr. 261-262 (2003), 105-122.

\bibitem{Jiang} Z.~Jiang, {\em Some results on the eventual paracanonical maps}, arXiv:1611.07141 [math.AG].

\bibitem{JP} Z.~Jiang, B.~Pareschi, {\em Cohomological rank functions on abelian varieties}, arXiv:1707.05888 [math.AG].

%\bibitem{kawamata} Y.~Kawamata, {\em Characterization of abelian varieties}, Compositio Math. {\bf 43} (1981), no. 2, 253--276.

%\lsi{con la nuova intro la citazione sotto andra' tolta}
%\bibitem{kollar-shafarevich} J. Koll\'ar, {\em Shafarevich maps and automorphic forms}, Princeton Univ. Press,. Princeton, NJ, 1995.

\bibitem{lazarsfeld-I}  R.~Lazarsfeld,  Positivity in algebraic geometry. I,  Ergebnisse der Mathematik und ihrer Grenzgebiete. 3. Folge. 48. Springer-Verlag, Berlin, 2004.

\bibitem{lazarsfeld-II}  R.~Lazarsfeld,  Positivity in algebraic geometry. II,  Ergebnisse der Mathematik und ihrer Grenzgebiete. 3. Folge. 49. Springer-Verlag, Berlin, 2004.

\bibitem{LM}  R.~Lazarsfeld, M.~Mustata, {\em Convex bodies associated to linear series}, Ann. Sci .Ec. Norm. Sup\'er. (4) 42 (2009), no. 5, 783--835.

\bibitem{LZ2} X.~Lu, K.~Zuo {\em On the Severi type Inequalities for Irregular Surfaces}, arXiv:1504.06569.

\bibitem{ritaabel} R.~Pardini, {\em Abelian covers of algebraic varieties}, J. reine angew. Math, {\bf 417} (1991), 191--213.

\bibitem {Pa} R.~Pardini, {\em The Severi inequality $K^2\geq 4\chi$ for surfaces of maximal Albanese dimension}, Invent. Math. {\bf 159} {\bf 3} (2005), 669--672.

\bibitem{pp} B.~Pareschi, M.~Popa, {\em GV sheaves, Fourier-Mukai transforms and Generic Vanishing}, Amer. J. Math. {\bf 133}(1) (2011), 235--271.

\bibitem{PPGVminimal} B.~Pareschi, M.~Popa, {\em Generic vanishing and minimal cohomology classes on abelian varieties}, {Math. Ann.}, {\bf 340}, (2008), {1},{209--222}.

\bibitem{PPM-regularity} B.~Pareschi, M.~Popa, { \em $M$-regularity and the {F}ourier-{M}ukai transform}, {Pure Appl. Math. Q.}, {4}, {2008}, {3, Special Issue: In honor of Fedor Bogomolov. Part 2}, {587--611}.

\bibitem{PP-regularity} B.~Pareschi, M.~Popa, {\em Regularity on abelian varieties {III}: relationship with generic vanishing and applications}, {Grassmannians, moduli spaces and vector bundles}, {Clay Math. Proc.}, vol.{14}, {141--167}, Amer. Math. Soc., {2011}.

\bibitem{miles} M. Reid, {\em $\pi_1$ for surfaces with small $c_1^2$}, Springer--Verlag L.N.M. {\bf 732},  (1978), 534--544.

\bibitem{sev} F. Severi, {\em La serie canonica e la teoria delle serie principali di gruppi di punti sopra una superficie algebrica}, Comm. Math. Helv. {\bf 4} (1932), 268--326.

\bibitem{xiao} G. Xiao, {\it Fibered algebraic surfaces with low slope}, Math. Ann. {\bf 276} (1987), 449--466.

\bibitem{Zhang} T.~Zhang, {\em Severi inequality for varieties of maximal Albanese dimension}, Math. Annalen {\bf 359} (2014), Issue 3, 1097--1114.

     \end{thebibliography}
     \end{document}